\newcommand{\segawa}{\textcolor{black}}
\newcommand{\HS}{\textcolor{black}}
\newcommand{\sato}{\textcolor{black}}
\newtheorem{theorem}{Theorem}
\newtheorem{lemma}{Lemma}
\newtheorem{corollary}{Corollary}
\newtheorem{proposition}{Proposition}
\newtheorem{remark}{Remark}
\newtheorem{definition}{Definition}
\newtheorem{assumption}{Assumption}
\newcommand{\bs}[1]{\boldsymbol{#1}}
\renewcommand{\labelenumi}{(\theenumi)}
\renewcommand{\labelenumi}{(\theenumi)}
\title{{\Large {\bf Spectral and asymptotic properties \\ of Grover walks on crystal lattices  
}
}}
\author{ 
{\small 
Yusuke Higuchi,$^{1}$ 
\footnote{higuchi@cas.showa-u.ac.jp 
}\quad 
Norio Konno,$^{2}$ 
\footnote{konno@ynu.ac.jp 
}\quad 
Iwao Sato,$^{3}$ 
\footnote{isato@oyama-ct.ac.jp 
}\quad  
Etsuo Segawa,$^{4}$ 
\footnote{Corresponding author: e-segawa@m.tohoku.ac.jp 
} 
}\\ 
{\scriptsize $^{1}$ 
Mathematics Laboratories, College of Arts and Sciences, Showa University
}\\
{\scriptsize 
Fuji-Yoshida, Yamanashi 403-005, Japan
}\\
{\scriptsize $^{2}$ 
Department of Applied Mathematics, Faculty of Engineering, Yokohama National University
}\\
{\scriptsize Hodogaya, Yokohama 240-8501, Japan
} \\
{\scriptsize $^3$ 
Oyama National College of Technology, 
}\\
{\scriptsize Oyama, Tochigi 323-0806, Japan
} \\
{\scriptsize $^4$ 
Graduate School of Information Sciences, Tohoku University
}\\
{\scriptsize Aoba, Sendai 980-8579, Japan
}\\ 
} 
\date{\empty }
\begin{document}
\maketitle

\par\noindent
\begin{small}
\par\noindent
{\bf Abstract}. 
We propose \segawa{a} twisted  Szegedy walk \segawa{for estimating the} limit behavior of a discrete-time quantum walk on 
\segawa{a crystal lattice, an infinite abelian covering graph, whose notion was introduced by \cite{KSS}}. 
\HS{First}, we show that the spectrum of the twisted Szegedy walk on the quotient graph \segawa{can be} expressed by mapping 
the spectrum of a twisted random walk \segawa{onto} the unit circle. 
Secondly, we show that the spatial Fourier transform of the twisted Szegedy walk \segawa{on a finite graph} with \segawa{appropriate} parameters 
becomes the Grover walk on its infinite abelian covering graph. 
Finally, as \segawa{an} application, we show that if the Betti number of the quotient graph is strictly greater than one, then localization is ensured 
with some appropriated initial state. 
\segawa{We also compute} the limit density function for the Grover walk on $\mathbb{Z}^d$ with flip flop shift, which implies 
the coexistence of linear spreading and localization. 
We partially obtain the abstractive shape \segawa{of the limit density function: the support} is within the $d$-dimensional sphere \segawa{of radius $1/\sqrt{d}$}, 
and \segawa{$2^d$ singular points reside on the sphere's surface.}
\footnote[0]{
{\it Key words and phrases.} Quantum walks on crystal lattice, weak limit theorem
}

\end{small}

\setcounter{equation}{0}

\section{Introduction}
Quantum walks \segawa{have been} intensively studied from \segawa{various perspectives}. 
\segawa{A} primitive form \segawa{of a} discrete-time quantum walk on $\mathbb{Z}$ \segawa{can be found in the so called} Feynman's checker board \cite{FH}. 
\segawa{An actual} discrete-time quantum walk itself was \segawa{introduced} in \segawa{a study of quantum probabilistic theory \cite{Gudder}}. 
The Grover walk on general \segawa{graphs was proposed in \cite{Watrous}}. 
This \segawa{quantum walk has been} intensively-investigated \segawa{in} quantum information theory 
and spectral graph theory \cite{Am,HKSS1}, \segawa{and is known to accomplish a quantum speed up search in certain cases.} 
A review on applications of quantum walks to the quantum search algorithms is presented in \cite{Am}. 
\segawa{To enable more abstract interpretation of these quantum search algorithms, the Grover walk was generalized to the Szegedy walk in 2004~\cite{Sze}.} 
One \segawa{advance} of the Szegedy walk is that 
the performance of the quantum search algorithm based on the Szegedy walk is usually evaluated by hitting time of the \segawa{underlying} random walk. 

In this paper, we introduce \segawa{a} ``twisted" Szegedy walk \segawa{on} $\ell^2(D)$ for a given graph $G=(V,D)$, 
where $V$ and $D$ are the \segawa{sets} of vertices and arcs, respectively.  
In parallel, we also present a twisted (random) walk on $\ell^2(V)$ \segawa{underlying} the quantum walk on $\ell^2(D)$ \segawa{for providing a spectral mapping theory}. 
\segawa{Study on this twisted random walks has been well developed; the effects of some spatial structures of the crystal lattice on 
the return probability and the central limit theorems of the random walks were clarified~\cite{KS,KSS}, for example.} 
To elucidate the relationship between the spectra of the twisted random walk and the twisted Szegedy walk, 
we introduce new boundary operators $d_A,d_B: \ell^2(D)\to \ell^2(V)$. 
Then we show that \HS{$\mathcal{L}\equiv d_A^*(\ell^2(V))+d_B^*(\ell^2(V))\subset \ell^2(D)$} 
is invariant under the action of the twisted Szegedy walk. 
Here $d_A^*,d_B^*: \ell^2(V)\to \ell^2(D)$ are the adjoint operators of $d_A$ and $d_B$, respectively. 
\segawa{Careful observation reveals} that the eigenvalues of the twisted random walk on $\ell^2(V)$ describe the ``real parts" 
of \segawa{the} eigenvalues of the quantum walk (Proposition~\ref{key}). 
Thus we call the eigenspace $\mathcal{L}$ \segawa{the} inherited part from the twisted random walk. 
\segawa{The remainder} of the eigenspace; $\mathcal{L}^{\bot}$, 
is expressed by the intersection of the kernels of the boundary operators $d_A$ and $d_B$. 
\segawa{For a typical boundary operator of graphs $\partial:C_1\to C_0$, $\mathrm{ker}(\partial)$ is generated by all closed paths of $G$. 
Here $C_1=\sum_{e\in D(G)}\mathbb{Z}\delta_e$ and $C_0=\sum_{v\in V(G)}\mathbb{Z}\delta_v$. 
We show that the orthogonal complement space 
$\mathcal{L}^{\bot}=\mathrm{ker}(d_A)\cap \mathrm{ker}(d_B)$ is also characterized by all closed paths of $G$ (Theorem~\ref{cycles}). }
This fact implies that there is a homological abstraction within the Grover walk, 
and \segawa{this abstraction is crucial} to provide a typical stochastic behavior called localization (see Theorem~\ref{loc_cycle}). 

There are many types of mapping theorems. For example, \segawa{Higuchi and Shirai 
provided how the spectra of the Laplacian changes under graph-operations in \cite{HS}.} 
\segawa{They mapped the spectrum of the Laplacian of the original graph $G$ 
to those of the line graph $LG$, the subdivision graph $SG$ and the para-line graph $PG$ of the original graph $G$. }
\segawa{Our work proposes an alternative mapping theorem for discrete-time quantum walks.}
\segawa{The twist is equivalent to the vector potential in the context of the quantum graph in \cite{HKSS2}. 
We have partially succeeded in finding a spectral mapping theorem and relating the discrete-time quantum walk 
to the quantum graph of a finite regular covering graph in \cite{HKSS2}. }
A spectral result on quantum walks on a graph with two infinite half lines \segawa{has been} also obtained in \cite{Fel_Hil}
from the view point of a scattering quantum theory. 
\segawa{Clarifying the connections between these related topics~\cite{Fel_Hil,HKSS2,HS} and this presented work is an interesting future problem.} 

\segawa{As an application of our proposed mapping theorem, we show that the Grover walk on a crystal lattice leads to not only localization but also linear spreading in some cases. }
We find that the absolutely continuous part of the spectrum of the random walk on the crystal lattice 
\segawa{is responsible for} linear spreading of the quantum walk (Theorem~\ref{linear_spreading}). 

\segawa{The limit distribution of discrete-time quantum walks on \HS{the} $1$-dimensional square lattice 
is the well-known Konno density function $f_K$~\cite{Konno,Konno2}}. 
\segawa{On \HS{the} $2$-dimensional square lattice, 
the limit distribution is explicitly obtained in special cases of quantum coins i.g., \cite{WK3}. } 
\segawa{On} $d$-dimensional square lattice with $d\geq 3$, although the asymptotics of the return probability at the origin with the moving shift is discussed in \cite{SKJ}, 
almost all of the limit distributions have \segawa{yet to be clarified}. 
Here we obtain the partial shape of the limit distributions of the Grover walk on $\mathbb{Z}^d$: 
\segawa{namely, we show that the support of the density function is contained by the $d$-dimensional ball of radius $1/\sqrt{d}$ and 
whose surface holds $2^d$ singular points (Theorems~\ref{ball} and \ref{singular}).} 

This paper is organized as follows. 
In Sect.~2, we propose the twisted Szegedy walk and provide the \segawa{spectral mapping theory} of the twisted Szegedy walk from a twisted random walk. 
Section 3 is devoted to the Grover walk on \segawa{the} crystal lattice and its Fourier transform. 
\segawa{In Sect.~4, we illustrate our concepts by concrete computations for triangular, hexagonal and $d$-dimensional square lattices to show exhibiting both linear spreading and localization on each of them. 
Finally we compute the limit density function of the Grover walk on $\mathbb{Z}^d$.} 
\section{Twisted Szegedy walks on graphs}
\subsection{\segawa{Definition of the twisted Szegedy walks on graphs}}
\segawa{First, we explain the graph construction.} 
Let $G = (V(G),E(G))$ be a connected graph \segawa{($G$ may have multiple edges and self-loops)} with a set $V=V(G)$ of vertices and a set $E=E(G)$ of unoriented edges. 
The set of arcs $D=D(G)$ is naturally extracted from $E(G)$ as follows:
the origin and the terminal vertices of an arc $e\in D(G)$ are denoted by $o(e)$ and $t(e)$, respectively.  
The inverse arc of $e$ is denoted by $\bar{e}$. So $e\in D(G)$ \HS{if and only if} $o(e)t(e)\in E(G)$. 
For a vertex $v$ of $G$, $\mathrm{deg}(v)$ stands for degree of $v$. 
A path $p$ is a sequence of arcs $(e_1,e_2,\dots,e_n)$ with $t(e_j)=o(e_{j+1})$ for any $j\in \{1,\dots,\segawa{n-1}\}$. 
We denote the origin and terminus of a path $p$ as $o(p)$, and $t(p)$, respectively. 
If $\{o(e_j)\}_{j=1}^n$ are distinct, then the path $p$ is called a simple path. 
We define the \segawa{sets} of all paths and simple ones of graph $G$ by $P(G)$ and $P_0(G)$, \segawa{respectively}. 
If $t(e_{n})=o(e_1)$ for $p=(e_1,e_2,\dots,e_n)\in P_0(G)$ holds, then $p$ is called \segawa{an} essential cycle. 

\segawa{For a countable set $\Gamma=\{\gamma_1,\gamma_2,\dots\}$, we denote by $\ell^2(\Gamma)$ the square summable Hilbert space generated by $\Gamma$ whose inner product is defined by 
\[\langle \psi, \phi \rangle=\sum_{j} \overline{\psi(\gamma_j)} \phi(\gamma_j),\] 
where $\overline{c}$ ($c\in \mathbb{C}$) is the complex conjugate of $c$.}
We \segawa{take} the standard basis of $\ell^2(D(G))$ as $\delta_{e}$, ($e\in D(G)$), such that 
\HS{$\delta_{e}(f)=1$ if $e=f$ and $\delta_{e}(f)=0$ otherwise. }
At each vertex $u\in V(G)$, we define \segawa{the} subspace $\mathcal{H}_u\subset \ell^2(D(G))$ by
	\[\mathcal{H}_u=\mathrm{span}\{\delta_{e}; o(e)=u\}.\] 
Thus $\ell^2(D(G))=\bigoplus_{u\in V(G)}\mathcal{H}_u$ holds. 
Define $w: D(G)\to \mathbb{C}$ called \segawa{a} {\it weight} such that $w(e)\neq 0$ for all $e\in D(G)$ and  
	\[ \sum_{e:o(e)=u}|w(e)|^2=1 \mathrm{\;for\; all\;} u\in V(G).\]
\segawa{A function $\theta: D(G)\to \mathbb{R}$ is called \segawa{a} {\it $1$-form} if}
	\[\theta(\bar{e})=-\theta(e) \mathrm{\;\;for\; any\;} e\in D(G). \]
To define the generalized Szegedy walk, we introduce two kinds of (weighted) boundary operators: 
\segawa{
\begin{definition}
\HS{Let $d_A$, $d_B$ be boundary operators for $\phi\in \ell^2(D(G))$ and $v\in V(G)$ such that }
\begin{align}\label{dualbounded}
	(d_A\phi)(v)=\sum_{e:o(e)=v}\phi(e)\overline{w(e)},\;\;\;(d_B\phi)(v)=\sum_{e:o(e)=v}\phi(\bar{e})\overline{w(e)}e^{-i\theta(e)},	
\end{align}
respectively. \HS{Here the symbol $i$ stands for the imaginary unit. }
\end{definition}
}
The coboundary operators of $d_A^*$ and $d_B^*$ are determined by for $f\in \ell^2(V(G))$ and $e\in D(G)$, 
\begin{align}\label{bounded}
	(d_A^*f)(e)=w(e)f(o(e)),\;\;\;(d_B^*f)(e)=e^{-i\theta(e)}w(\bar{e})f(t(e)), 
\end{align}
respectively from the relationship $\langle \phi,d_J^*f \rangle_D=\langle d_J\phi,f \rangle_V$ ($J\in\{A,B\}$) 
with $f\in \ell^2(V(G))$ and $\phi\in \ell^2(D(G))$. 
Here for any $\phi_1,\phi_2\in \ell^2(D(G))$ and $f_1,f_2\in \ell^2(V(G))$, 
\[ \langle \phi_1,\phi_2\rangle_D=\sum_{e\in D(G)}\overline{\phi_1(e)}\phi_2(e),\;\langle f_1,f_2\rangle_V=\sum_{v\in V(G)}\overline{f_1(v)}f_2(v). \]
\HS{\begin{lemma}\label{projection}
It holds that $d_Ad_A^*=d_Bd_B^*=I_V$. 
On the other hand, the operators $d_A^*d_A$ and $d_B^*d_B$ are the projections onto the subspaces 
\begin{align} 
	\mathcal{A} \equiv d_A^*\left(\;\ell^2(V(G))\;\right)\subset \ell^2(D(G)) \;\;\mathrm{and}\;\;
        \mathcal{B} \equiv d_B^*\left(\;\ell^2(V(G))\;\right)\subset \ell^2(D(G)),
\end{align}
respectively. 
So we have \[ d_A(\;\ell^2(D(G))\;)=d_B(\;\ell^2(D(G))\;)=\ell^2(V(G)).  \]
\end{lemma}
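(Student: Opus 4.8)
The plan is to establish the three assertions in turn by direct computation, using only the defining identities $\sum_{e:o(e)=v}|w(e)|^2=1$ and $\theta(\bar e)=-\theta(e)$, together with one standard functional-analytic observation about operators satisfying $TT^{*}=I$.

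First I would verify $d_Ad_A^{*}=I_V$. For $f\in\ell^2(V(G))$ and $v\in V(G)$, substituting the formula for $d_A^{*}f$ into that for $d_A$ gives
\[(d_Ad_A^{*}f)(v)=\sum_{e:o(e)=v}w(e)f(o(e))\overline{w(e)}=f(v)\sum_{e:o(e)=v}|w(e)|^2=f(v),\]
where the last equality is the normalization of the weight. The computation for $d_Bd_B^{*}$ runs the same way but requires care with the arc reversals: one first evaluates $(d_B^{*}f)(\bar e)=e^{-i\theta(\bar e)}w(e)f(o(e))=e^{i\theta(e)}w(e)f(o(e))$, using $\overline{\bar e}=e$, $t(\bar e)=o(e)$ and $\theta(\bar e)=-\theta(e)$; inside the sum defining $d_B$ the factors $e^{i\theta(e)}$ and $e^{-i\theta(e)}$ then cancel, leaving $\sum_{e:o(e)=v}|w(e)|^2 f(v)=f(v)$.

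The second assertion I would deduce abstractly rather than by a further computation. Write $T=d_A$, so that $T^{*}=d_A^{*}$ and $TT^{*}=I_V$ by the first step. Then $P:=T^{*}T$ is self-adjoint, and $P^2=T^{*}(TT^{*})T=T^{*}T=P$, so $P$ is an orthogonal projection. Its range equals $\mathcal{A}=\mathrm{ran}(d_A^{*})$: clearly $\mathrm{ran}(P)\subseteq\mathrm{ran}(T^{*})$, while for any $g$ one has $P(T^{*}g)=T^{*}(TT^{*})g=T^{*}g$, so $P$ fixes $\mathrm{ran}(T^{*})$ pointwise and hence $\mathrm{ran}(P)=\mathrm{ran}(T^{*})=\mathcal{A}$. (The identity $TT^{*}=I$ also shows $T^{*}$ is an isometry, so this range is automatically closed and all operators involved are bounded.) The identical argument with $T=d_B$ identifies $d_B^{*}d_B$ as the orthogonal projection onto $\mathcal{B}$.

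Finally, the surjectivity statements are immediate consequences of the first step: for any $f\in\ell^2(V(G))$ we have $f=d_A(d_A^{*}f)=d_B(d_B^{*}f)$, so both $d_A$ and $d_B$ map $\ell^2(D(G))$ onto all of $\ell^2(V(G))$. The only place where anything beyond bookkeeping is required is the $d_Bd_B^{*}$ computation, where the phase factors and the two arc-reversal substitutions must be tracked simultaneously; once the projection fact for $TT^{*}=I$ is isolated, the remainder of the proof is essentially formal.
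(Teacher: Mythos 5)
Your proposal is correct, and it fills in a proof that the paper itself omits: Lemma~1 is stated there without argument, the direct computation from Eqs.~(2.1)--(2.2) and the normalization $\sum_{e:o(e)=v}|w(e)|^2=1$ being exactly what the authors leave implicit. Your only addition beyond this routine verification --- isolating the abstract fact that $TT^{*}=I$ makes $T^{*}T$ the orthogonal projection onto $\mathrm{ran}(T^{*})$, with $T^{*}$ an isometry so that the range is closed --- is sound and has the mild benefit of working verbatim on infinite graphs, where closedness of $\mathcal{A}$ and $\mathcal{B}$ is not automatic.
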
}
%
\segawa{
\begin{definition}
The twisted Szegedy walk associated with the weight $w$ and the $1$-form $\theta$ is defined as follows: 
\begin{enumerate}
\item The total state space $\mathcal{H}$ : $\mathcal{H}=\ell^2(D(G))$. 
\item Time evolution $U^{(w,\theta)}$ : $U^{(w,\theta)}=S^{(\theta)}C^{(w)}$. Here 
\begin{enumerate}
\item The coin operator:  $C^{(w)}=2d_A^*d_A-I $
\item The (twisted) shift operator: for $f\in \ell^2(D(G))$, \[ (S^{(\theta)}f)(e)={\rm e}^{-{\rm i}\theta(e)}f(\bar{e}). \]
\end{enumerate}
\item The finding probability: let $\Psi_n\in \ell^2(D(G))$ be the $n$-th iteration of the walk such that $\Psi_n=U^{(w,\theta)}\Psi_{n-1}$ $(n\in \{1,2,\dots\})$ with $||\Psi_0||=1$. 
We define   $\nu_n: V(G)\to [0,1]$ such that 
	\[ \nu_n(u)=\sum_{e:o(e)=u}|\Psi_n(e)|^2. \]
$\nu_n(u)$ is called a finding probability of the twisted Szegedy walk at time $n$ at vertex $u$. 
\end{enumerate}
\end{definition}
}
%
\segawa{We remark that $\nu_n$ is a probability distribution by the unitarity of the time evolution $U^{(w,\theta)}$. } 
\HS{The motivation introducing the $1$-form $\theta$ to the quantum walk will be clear in Sect.~3. }
\HS{
\begin{remark}
On some regular graphs, there exists two possible choices of the shift operators: moving shift $S_m$ and flip-flop shift $S_f$. 
For one-dimensional square lattice, these shift operators are given by 
\begin{align*} 
	(S_m\psi)((x,x+j)) &= \psi((x-j,x)), \\ 
        (S_f\psi)((x,x+j)) &= \psi((x+j,x)),\;\; (j\in\{\pm 1\}, \;x\in\mathbb{Z}). 
\end{align*}
The moving shift maintains the direction of arcs while the flip-flop shift reverses them. 
However on a general graph, the direction, and hence the moving shift operator, is not uniquely determined. 
Moreover, any moving shift type quantum walks on unoriented graphs can be generated by permutating the local coin operator of a flip-flop shift type quantum walk. 
For one-dimensional square lattice, the time evolution of the flip-flop shift $U_f(H)$ is related to the moving shift $U_m(H)$ 
with local coin operator $H$ as follows: 
\[ U_m(H)=\mathcal{P}U_f(H\mathcal{P})\mathcal{P}. \]
Here $\mathcal{P}g((x,x+j))=g((x,x-j))$. 
More detailed and general discussions are given in Sect.~2 in \cite{HKSS1}. 
For these reasons, we apply the flip-flop shift operator in this paper. 
Discrete-time quantum walks with the moving shift on an {\it oriented} triangular lattice are implemented in \cite{KSKJ}. 
\end{remark}
}
%
%
\subsection{\segawa{Spectral map of the twisted Szegedy walk}}
\segawa{We compare the dynamics of the twisted Szegedy walk to the underlying random walk. }
To this end, we introduce $P(e): \mathcal{H}_{o(e)}\to \mathcal{H}_{t(e)}$ ($e\in D(G)$) by 
	\begin{equation}\label{weightP} P(e)=\Pi_{\delta_{\bar{e}}}U^{(w,\theta)}\Pi_{\mathcal{H}_{o(e)}}. \end{equation}
\segawa{Throughout this paper, we define $\Pi_{\mathcal{H}'}$ as the orthogonal projection onto $\mathcal{H}'$, \HS{a subspace $\mathcal{H'}$ of $\mathcal{H}$}. }
\sato{For $e\in D(G)$, we illustrate $P(e)$ in the following matrix expression: 
letting $\{e_1,\dots,e_{\kappa}\}$ and $\{f_1,\dots,f_{\kappa'}\}$ ($\kappa=\mathrm{deg}(o(e))$, $\kappa'=\mathrm{deg}(t(e))$) be all arcs with $o(e_j)=o(e)$ ($j\in\{1,\dots,\kappa\}$) and 
$o(f_j)=t(e)$ ($j\in\{1,\dots,\kappa'\}$), respectively, 
\[
P(e)=
\begin{array}{r|ccccc}
 & e & e_2 & \cdots & e_{\kappa} \\ \hline
\bar{e} & (2|w(e)|^2 -1)e^{i\theta(e)} & 2\overline{w(e_2)}w(e)e^{i\theta(e)} & \cdots & 2\overline{w(e_\kappa)}w(e)e^{i\theta(e)} \\
f_2     & 0 & 0 & \cdots & 0 \\
\vdots  & \vdots  & \vdots  &  \ddots    &  \vdots \\
f_{\kappa'} & 0 & 0 & \cdots & 0
\end{array},
\]
where we put $e_1=e$ and $f_1=\bar{e}$. }
Given an initial state $\Psi_0\in \ell^2(D(G))$ with $||\Psi_0||^2=1$, 
let $\Psi_n: V(G)\to \ell^2(D(G))$ be $\Psi_n(u)=\Pi_{\mathcal{H}_u}(U^{(w,\theta)})^n \Psi_0$ for $n\in \mathbb{N}$. 
Then we have the following recurrence relation: 
	\begin{equation}\label{analogue} 
	\Psi_n(u)=\sum_{e:t(e)=u}P(e)\Psi_{n-1}(o(e)),\;(n\geq 1), 
	\end{equation}
since 
\begin{align*}
	\Psi_n(u) &= \Pi_{\mathcal{H}_u}(U^{(w,\theta)})\sum_{v\in V(G)}\Psi_{n-1}(v)=\sum_{e:o(e)=u}\Pi_{\delta_e}(U^{(w,\theta)})\Psi_{n-1}(o(\bar{e})) \\
        	&= \sum_{e:o(e)=u}\Pi_{\delta_e}(U^{(w,\theta)})\Pi_{\mathcal{H}_{o(\bar{e})}}\Psi_{n-1}(o(\bar{e}))=\sum_{e:t(e)=u}P(e)\Psi_{n-1}(o(e)). 
\end{align*}
\segawa{This expression is analogous to the random walk on $G$ with transition probability $\{p(e):e\in D(G)\}$, where $p(e)\in (0,1]$ and $\sum_{e:o(e)=u}p(e)=1$; that is, 
we have $\{P(e):e\in D(G)\}$, where $P(e): \mathcal{H}_{o(e)}\to \mathcal{H}_{t(e)}$ and $\sum_{e: o(e)=u}P(e)=H^{(u)}\in \mathrm{SU}(\mathrm{deg}(u))$. }
\segawa{Now we introduce an operator which is essential to the rest of this paper. 
\begin{definition}\label{discriminant}
We define a self-adjoint operator on $\ell^2(V)$ as follows: 
	\[ T^{(w,\theta)}= d_A d_B^*. \]
We call $T^{(w,\theta)}$ a discriminant operator of $U^{(w,\theta)}$. 
\end{definition}
}
Applying Eqs.~(\ref{dualbounded}) and (\ref{bounded}), we have for $u,v\in \ell^2(V)$, 
\begin{equation}\label{T_uv}
	\langle \delta_v, T^{(w,\theta)}\delta_u \rangle=\sum_{f:\;o(f)=u,t(f)=v} w(f)\overline{w(\bar{f})}\;{\rm e}^{{\rm i}\theta(f)}.
\end{equation}
\segawa{When we specify $p(e)=|w(e)|^2$ $(e\in D(G))$ in the above random walk, we can relate this random walk to the discriminant operator $T^{(w,\theta)}$ through Proposition~2. 
Now we represent a spectral map from $\mathrm{spec}(T^{(w,\theta)})$ to $\mathrm{spec}(U^{(w,\theta)})$ as follows. 
}
%
\noindent \begin{proposition}\label{key}
\segawa{Assume that graph $G$ is finite; that is, $|V(G)|,|E(G)|<\infty$. Then we have the following spectral map. }
\begin{enumerate}
\item Eigenvalues: 
Denote by $m_{\pm 1}$ the multiplicities of the eigenvalues \segawa{$\pm 1$} of $T^{(w,\theta)}$, respectively. 
Let $\varphi_{QW}:\mathbb{R}\to \mathbb{R}$ such that $\varphi_{QW}(x)=(x+x^{-1})/2$. 
Then we have 
\begin{equation}
\mathrm{spec}(U^{(w,\theta)})=\varphi_{QW}^{-1}\left(\mathrm{spec}(T^{(w,\theta)})\right) \cup \{1\}^{M_1} \cup \{-1\}^{M_{-1}},
\end{equation}
where $M_1=\mathrm{max}\{0,|E|-|V|+m_1\}$, $M_{-1}=\mathrm{max}\{0,|E|-|V|+m_{-1}\}$. 
\item Eigenspace: 
The eigenspace of eigenvalues $\varphi_{QW}^{-1}\left(\mathrm{spec}(T^{(w,\theta)})\right)$, $\mathcal{L}$, is 
	\[ \HS{\mathcal{L}= d_A^*(\ell^2(V))+d_B^*(\ell^2(V)).} \]
The normalized eigenvector of the eigenvalue $e^{i\phi}\in \varphi_{QW}^{-1}\left(\mathrm{spec}(T^{(w,\theta)})\right)$ is given by 
	\segawa{
        \[ \begin{cases} d_A^*\nu_\phi & \text{: $\phi\in\{0,\pi\}$,} \\ \frac{1}{\sqrt{2}|\sin \phi|}(I-e^{i\phi}S^{(\theta)})d_A^*\nu_\phi, & \text{: $\phi\notin \{0,\pi\}$,} \end{cases}\]}
where $\nu_\phi=\nu_{-\phi}\in \ell^2(V)$ is the eigenfunction of $T^{(w,\theta)}$ of \segawa{the} eigenvalue $\cos\phi$ with $||\nu||=1$. 
The orthogonal complement space is given by 
\[ \mathcal{L}^\bot=\mathrm{ker}(d_A)\cap \mathrm{ker}(d_B).\]
The eigenspaces corresponding to eigenvalues $\{1\}^{M_1}$ and $\{-1\}^{M_{-1}}$ are described by
\begin{equation} \mathrm{ker}(d_A) \cap \mathcal{H}_-^{(S)}\mathrm{\;and\;} \mathrm{ker}(d_{\segawa{A}}) \cap \mathcal{H}_+^{(S)}, \end{equation}
respectively. 
Here 
	\[\mathcal{H}_{\pm}^{(S)}=\mathrm{span}\{ \delta_f\pm {\rm e}^{i\theta(f)} \delta_{\bar{f}}; f\in D(G)\}, \] 
in other words $\mathcal{H}_\pm^{(S)}$ are the eigenspaces of $S^{(\theta)}$ \segawa{with eigenvalues} $1$ and $-1$, respectively. 
\end{enumerate}
\end{proposition}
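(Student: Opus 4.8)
The plan is to treat $U^{(w,\theta)}=S^{(\theta)}C^{(w)}$ as a product of two self-adjoint involutions and to reduce its spectral analysis to that of the self-adjoint operator $T^{(w,\theta)}$ on the smaller space $\ell^2(V)$. Both factors are reflections: a direct check from the definition of the shift gives $(S^{(\theta)})^{*}=S^{(\theta)}$ and $(S^{(\theta)})^{2}=I$, while Lemma~\ref{projection} yields $C^{(w)}=2\Pi_{\mathcal{A}}-I$, the reflection across $\mathcal{A}$. The cornerstone identities, read off directly from \eqref{dualbounded} and \eqref{bounded}, are
\begin{equation}
d_A S^{(\theta)}=d_B,\qquad S^{(\theta)}d_A^{*}=d_B^{*},
\end{equation}
so that $\mathcal{B}=S^{(\theta)}\mathcal{A}$ and $T^{(w,\theta)}=d_A d_B^{*}=d_A S^{(\theta)}d_A^{*}$. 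First I would record that $\mathcal{L}=\mathcal{A}+\mathcal{B}$ and $\mathcal{L}^{\bot}=\mathrm{ker}(d_A)\cap\mathrm{ker}(d_B)$ are both invariant under $U^{(w,\theta)}$: the coin fixes $\mathcal{A}$ and maps $\mathcal{B}$ into $\mathcal{A}+\mathcal{B}$ through $\Pi_{\mathcal{A}}$, while $S^{(\theta)}$ merely interchanges $\mathcal{A}$ and $\mathcal{B}$; unitarity then forces $\mathcal{L}^{\bot}$ to be invariant as well.

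For the inherited part I would fix an orthonormal eigenbasis $\{\nu_j\}$ of $T^{(w,\theta)}$ with $T^{(w,\theta)}\nu_j=\cos\phi_j\,\nu_j$ (available since $G$ is finite), and study the subspaces $V_j=\mathrm{span}\{d_A^{*}\nu_j,\,d_B^{*}\nu_j\}$. Using $C^{(w)}=2\Pi_{\mathcal{A}}-I$, the identity $\Pi_{\mathcal{A}}d_B^{*}\nu_j=d_A^{*}T^{(w,\theta)}\nu_j=\cos\phi_j\,d_A^{*}\nu_j$, and the cornerstone relations, a short computation shows $V_j$ is $U^{(w,\theta)}$-invariant with
\begin{equation}
U^{(w,\theta)}d_A^{*}\nu_j=d_B^{*}\nu_j,\qquad U^{(w,\theta)}d_B^{*}\nu_j=-d_A^{*}\nu_j+2\cos\phi_j\,d_B^{*}\nu_j,
\end{equation}
i.e.\ $U^{(w,\theta)}|_{V_j}$ has matrix $\left(\begin{smallmatrix}0 & -1\\ 1 & 2\cos\phi_j\end{smallmatrix}\right)$, whose eigenvalues are exactly $e^{\pm i\phi_j}=\varphi_{QW}^{-1}(\cos\phi_j)$. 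Solving for the eigenvector returns $d_A^{*}\nu_j-e^{i\phi_j}d_B^{*}\nu_j=(I-e^{i\phi_j}S^{(\theta)})d_A^{*}\nu_j$, and Lemma~\ref{projection} (giving $\|d_A^{*}\nu_j\|=\|d_B^{*}\nu_j\|=1$ and $\langle d_A^{*}\nu_j,d_B^{*}\nu_j\rangle=\cos\phi_j$) yields the norm $\sqrt{2}\,|\sin\phi_j|$; when $\cos\phi_j=\pm1$ the two generators coincide up to sign, $V_j$ collapses to the one-dimensional eigenspace spanned by $d_A^{*}\nu_j$, matching the stated formula. The same inner-product relations show $V_j\perp V_k$ for $j\neq k$, so $\mathcal{L}=\bigoplus_j V_j$ orthogonally and $\mathrm{spec}(U^{(w,\theta)}|_{\mathcal{L}})=\varphi_{QW}^{-1}(\mathrm{spec}\,T^{(w,\theta)})$.

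On $\mathcal{L}^{\bot}\subset\mathcal{A}^{\bot}$ the coin acts as $-I$, hence $U^{(w,\theta)}=-S^{(\theta)}$ there; since $S^{(\theta)}$ is a reflection, the eigenvalues are $\pm1$ with eigenspaces $\mathcal{L}^{\bot}\cap\mathcal{H}_{\mp}^{(S)}$. Because $d_B=d_A S^{(\theta)}$ acts as $\pm d_A$ on $\mathcal{H}_{\pm}^{(S)}$, one has $\mathrm{ker}(d_A)=\mathrm{ker}(d_B)$ there, so these eigenspaces simplify to $\mathrm{ker}(d_A)\cap\mathcal{H}_{\mp}^{(S)}$, as claimed.

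The hard part is the multiplicity count, i.e.\ computing $M_{1}=\dimm(\mathrm{ker}(d_A)\cap\mathcal{H}_{-}^{(S)})$ and $M_{-1}=\dimm(\mathrm{ker}(d_A)\cap\mathcal{H}_{+}^{(S)})$ and matching them with $|E|-|V|+m_{\pm1}$. Writing $\mathcal{H}_{\mp}^{(S)}=\mathrm{ran}(I\mp S^{(\theta)})$ and applying rank--nullity to $d_A|_{\mathcal{H}_{\mp}^{(S)}}$ (using $\dimm\,\mathcal{H}_{\mp}^{(S)}=|E|$), everything reduces to identifying the image $d_A(\mathcal{H}_{\mp}^{(S)})=\mathrm{ran}(I\mp T^{(w,\theta)})$. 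The inclusion $\supseteq$ follows from $d_A(I\mp S^{(\theta)})d_A^{*}h=(I\mp T^{(w,\theta)})h$; for $\subseteq$ I would check that any $g$ with $T^{(w,\theta)}g=\pm g$ satisfies $d_A^{*}g\in\mathcal{H}_{\pm}^{(S)}$ (again via $S^{(\theta)}d_A^{*}=d_B^{*}$), hence $g\perp d_A(\mathcal{H}_{\mp}^{(S)})$, giving $d_A(\mathcal{H}_{\mp}^{(S)})\subseteq\mathrm{ker}(I\mp T^{(w,\theta)})^{\bot}=\mathrm{ran}(I\mp T^{(w,\theta)})$. Since $\dimm\,\mathrm{ran}(I\mp T^{(w,\theta)})=|V|-m_{\pm1}$, rank--nullity gives $M_{\pm1}=|E|-(|V|-m_{\pm1})=|E|-|V|+m_{\pm1}$, which is automatically nonnegative as a dimension (so the maximum with $0$ is only a safeguard). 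Finally, since $\mathcal{L}$ and $\mathcal{L}^{\bot}$ are orthogonal $U^{(w,\theta)}$-invariant subspaces filling $\ell^2(D(G))$, their spectra combine to give the asserted decomposition.
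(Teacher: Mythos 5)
Your proof is correct, and its spine coincides with the paper's: you isolate the same invariant planes $\mathrm{span}\{d_A^*\nu_\phi,d_B^*\nu_\phi\}$ using the same identities $U^{(w,\theta)}d_A^*=d_B^*$ and $U^{(w,\theta)}d_B^*=2d_B^*T^{(w,\theta)}-d_A^*$, obtain the same $2\times2$ matrix with eigenvalues $e^{\pm i\phi}$, the same eigenvector $(I-e^{i\phi}S^{(\theta)})d_A^*\nu_\phi$ of norm $\sqrt{2}|\sin\phi|$, and the same collapse to $d_A^*\nu_\phi$ when $\cos\phi=\pm1$; likewise, identifying $U^{(w,\theta)}$ with $-S^{(\theta)}$ on $\mathcal{L}^\bot$ is exactly what the paper does, only it verifies this by explicitly chasing $\bs{\psi}\mp S^{(\theta)}\bs{\psi}$. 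Where you genuinely diverge is the multiplicity count. The paper first computes $\mathrm{dim}(\mathcal{H}_\pm^{(S)}\cap\mathcal{L})=|V|-m_{\mp1}$ by casework on $\phi\in\pi\mathbb{Z}$ versus $\phi\notin\pi\mathbb{Z}$ (each nondegenerate plane meets each of $\mathcal{H}_\pm^{(S)}$ in dimension one, the degenerate ones lie wholly in one of them), then subtracts from $\mathrm{dim}\,\mathcal{H}_\pm^{(S)}=|E|$ — a step that tacitly uses that each $\mathcal{H}_\pm^{(S)}$ splits along $\mathcal{L}\oplus\mathcal{L}^\bot$ (true, since $S^{(\theta)}$ preserves $\mathcal{L}=\mathcal{A}+\mathcal{B}$, but left unsaid). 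You instead compute $\mathrm{dim}(\mathrm{ker}(d_A)\cap\mathcal{H}_\mp^{(S)})$ in one stroke by rank–nullity, via the identification $d_A(\mathcal{H}_\mp^{(S)})=\mathrm{ran}(I\mp T^{(w,\theta)})$. This bypasses both the casework and the implicit splitting, and it buys something the paper's version does not state: it proves $\mathrm{rank}(I\mp T^{(w,\theta)})\leq|E|$, so $|E|-|V|+m_{\pm1}\geq 0$ automatically and the $\mathrm{max}\{0,\cdot\}$ in the statement is indeed only a safeguard, as you observe.

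One step you understate: the inclusion $d_A(\mathcal{H}_\mp^{(S)})\subseteq\mathrm{ran}(I\mp T^{(w,\theta)})$ rests on the claim that $T^{(w,\theta)}g=\pm g$ forces $S^{(\theta)}d_A^*g=\pm d_A^*g$, and this does \emph{not} follow from the identity $S^{(\theta)}d_A^*=d_B^*$ alone, which is all you cite. It needs the equality case of Cauchy--Schwarz: since $d_Ad_A^*=I_V$ (Lemma~\ref{projection}) and $S^{(\theta)}$ is unitary, both $d_A^*g$ and $S^{(\theta)}d_A^*g$ have norm $\|g\|$, while $\langle d_A^*g,S^{(\theta)}d_A^*g\rangle=\langle g,T^{(w,\theta)}g\rangle=\pm\|g\|^2$, which forces $S^{(\theta)}d_A^*g=\pm d_A^*g$. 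This is precisely the equality analysis the paper carries out around Eq.~(\ref{AcapB}) inside the proof of Proposition~\ref{gururi} (concluding $d_A^*m=\pm d_B^*m$ there). With that one line supplied, your argument is complete.
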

\begin{proof}
\segawa{First, we show that the absolute value of each eigenvalue of $T^{(w,\theta)}$ is bounded by $\pm 1$. }
When $T^{(w,\theta)}g=\lambda g$  for $g\in \ell^2(V)$, $\lambda\in \mathbb{R}$ because $T^{(w,\theta)}$ is a self adjoint operator. 
On the other hand, 
from the definition of $T^{(w,\theta)}$ and Remark~\ref{projection}, we can express $T^{(w,\theta)}$ in the alternative form:  
	\begin{equation}\label{T_another}
	T^{(w,\theta)}=d_A\Pi_\mathcal{A}S^{(\theta)}\Pi_\mathcal{A}d_A^*.
	\end{equation} 
\segawa{Two boundary operators $d_A$  and $d_B$ are related by 
	\[ d_B=d_AS^{(\theta)},\;\;d_B^*=S^{(\theta)}d_A^*. \] }
Equation~(\ref{T_another}) implies that 
	\begin{multline}\label{AcapB}
 	|\lambda|^2 ||g||^2= ||T^{(w,\theta)}g||^2 =|| \Pi_\mathcal{A}S^{(\theta)}\Pi_\mathcal{A}\psi ||^2 \\ 
 	\leq || S^{(\theta)}\Pi_\mathcal{A}\psi ||^2=|| \Pi_\mathcal{A}\psi ||^2 \leq ||\psi||^2=||g||^2, 
	\end{multline}
where $\psi=d_A^*g\in \ell^2(D)$. 
By Eq.~(\ref{T_another}), the second equation derives from 
$||T^{(w,\theta)}g||^2=\langle d_A\psi, T^{(w,\theta)}d_A\psi \rangle=\langle \psi, d_A^*T^{(w,\theta)}d_A\psi \rangle
=\langle \psi, \Pi_\mathcal{A}S^{(\theta)}\Pi_\mathcal{A}\psi \rangle$. 
Lemma~\ref{projection} implies $||\psi||^2=\langle d_A^*g, d_A^*g\rangle=\langle g, d_Ad_A^*g\rangle=||g||^2$. 
Thus  all the eigengenvalues of $T^{(w,\theta)}$ live in $[-1,1]$. 

On the other hand, noting $C^{(w)}d_A^*=d_A^*$ and $C^{(w)}d_B^*=2d_A^*T^{(w,\theta)}-d_B^*$, we have  
\begin{align}\label{U_dadb}
U^{(\theta,w)}d_A^* &= d_B^*, \\
U^{(\theta,w)}d_B^* &= 2d_B^*T^{(w,\theta)}-d_A^*.
\end{align}
\segawa{If $\nu_\phi\in \ell^2(V)$ is selected as the eigenfunction of $T^{(w,\theta)}$ of eigenvalue $\cos\phi$,} 
we observe that $\mathrm{span}\{d_A^*\nu_\phi,d_B^*\nu_\phi\}\subset \mathcal{L}$ is invariant under the action $U^{(w,\theta)}$ as follows: 
\begin{align}
U^{(\theta,w)}\rho_\phi &= S^{(\theta)}\rho_\phi, \label{A}\\
U^{(\theta,w)}S^{(\theta)}\rho_\phi &= 2\lambda S^{(\theta)}\rho_\phi-\rho_\phi, 
\end{align}
where $\rho_\phi=d_A^*\nu_\phi$. 
Define $\mathcal{L}_{\lambda} \equiv \mathrm{span}\{ \rho_\phi,S^{(\theta)}\rho_\phi \}$, where $\lambda=\cos\phi$. 
It holds that 
\begin{equation} 
\mathcal{L}=\bigoplus_{\lambda \in \mathrm{spec}(T^{(w,\theta)})} \mathcal{L}_{\lambda}. 
\end{equation}
Since 
\[|\langle \rho_\phi, S^{(\theta)}\rho_\phi \rangle|
=|\langle d_A^*\nu_\phi, S^{(\theta)}d_A^*\nu_\phi\rangle|=|\langle \nu_\phi,d_AS^{(\theta)}d_A^*\nu_\phi \rangle|
=|\langle \nu_\phi,T^{(w,\theta)}\nu_\phi \rangle|=\cos\phi, \]
the value $\phi\in \mathbb{R}$ is called \segawa{the} geometric angle between $\rho_\phi$ and $S^{(\theta)}\rho_\phi$. 
Thus \HS{ $\rho_\phi$ and $S^{(\theta)}\rho_\phi$ are linearly dependent if and only if $\phi\in\{n\pi;n\in \mathbb{N}\}$. }
\begin{enumerate}
\item $\phi\in\{n\pi;n\in \mathbb{N}\}$ case : In this case, since
$\langle \rho_\phi, S^{(\theta)}\rho_\phi \rangle\in \{\pm 1\}$ holds, then 
we have $S^{(\theta)}\rho_{0}=\rho_{0}$ and $S^{(\theta)}\rho_{\pi}=-\rho_{\pi}$. 
Thus 
\begin{equation}\label{1}
\mathcal{L}_1 \subset \mathcal{H}_+^{(S)} \mathrm{\;and\;} \mathcal{L}_{-1} \subset \mathcal{H}_-^{(S)}. 
\end{equation} 
From Eq.~(\ref{A}), 
\begin{equation} U^{(w,\theta)}\rho_0=\rho_0 \mathrm{\;and\;} U^{(w,\theta)}\rho_\pi=-\rho_\pi. \end{equation}
\item $\phi\notin\{n\pi;n\in \mathbb{N}\}$ case : In the subspace $\mathcal{L}_{\cos\phi} =\mathrm{span}\{ \rho_\phi,S^{(\theta)}\rho_\phi \}$, 
\[ \HS{U^{(w,\theta)}|_{\mathcal{L}_{\cos \phi}}} \cong  \begin{bmatrix} 0 &  -1 \\ 1 & 2\cos\phi \end{bmatrix}, \] 
where $\rho_\phi \cong {}^T[1,0]$ and $S^{(\theta)}\rho_\phi \cong {}^T[0,1]$. Then the eigenvalues and their normalized eigenvectors of eigenspace $\mathcal{L}_{\cos\phi}$ are given by 
\begin{equation} 
e^{\pm i\phi}\mathrm{\;and\;} \omega_{\pm \phi} = \frac{1}{\sqrt{2}|\sin \phi|} (I-e^{\pm i\phi}S^{(\theta)})\rho_{\pm \phi}, 
\end{equation}
respectively. 
Also noting that 
\[ S^{(\theta)}\omega_\phi=-e^{i\phi}\omega_{-\phi},\;\; S^{(\theta)}\omega_{-\phi}=-e^{-i\phi}\omega_{\phi},  \] 
we have 
\[ \omega_\phi + e^{i\phi}\omega_{-\phi} \in \mathcal{H}_-^{(S)},\;\; 
\omega_\phi- e^{i\phi}\omega_{-\phi} \in \mathcal{H}_+^{(S)}. \]
Therefore
\begin{equation}\label{2}
\mathrm{dim}(\mathcal{L}_{\cos\phi} \cap \mathcal{H}_+^{(S)})=\mathrm{dim}(\mathcal{L}_{\cos\phi} \cap \mathcal{H}_-^{(S)})=1. 
\end{equation}
\end{enumerate}
Combining Eqs.~(\ref{1}) and (\ref{2}), we obtain 
\begin{equation}\label{3}
\mathrm{dim}(\mathcal{H}_+^{(S)} \cap \mathcal{L}) =|V|-m_{-1} \mathrm{\;and\;} \mathrm{dim}(\mathcal{H}_-^{(S)} \cap \mathcal{L}) =|V|-m_{1}. 
\end{equation}
We now consider the orthogonal complement space of $\mathcal{L}$. 
By direct computation, 
we have 
$\psi\in \mathcal{H}_-^{(S)}$ and $\phi\in \mathcal{H}_+^{(S)}$ if and only if 
\begin{align}
\psi(e) &= -e^{-i\theta(e)}\psi(\bar{e}), \label{alpha}\\
\phi(e) &= e^{-i\theta(e)}\phi(\bar{e}), \label{alpha2}
\end{align}
respectively.
For any $\bs{\psi}\in \mathcal{L}^\bot$, it holds that 
\begin{align*}
\bs{\psi} & \stackrel{C^{(w)}}{\mapsto} -\bs{\psi} \stackrel{S^{(\theta)}}{\mapsto} -S^{(\theta)}\bs{\psi}, \\
S^{(\theta)}\bs{\psi} & \stackrel{C^{(w)}}{\mapsto} -S^{(\theta)}\bs{\psi} \stackrel{S^{(\theta)}}{\mapsto} -\bs{\psi}
\end{align*}
and therefore 
\begin{align}
U^{(w,\theta)}(\bs{\psi}-S^{(\theta)}\bs{\psi}) = (\bs{\psi}-S^{(\theta)}\bs{\psi}) \mathrm{\;and\;}
U^{(w,\theta)}(\bs{\psi}+S^{(\theta)}\bs{\psi}) = -(\bs{\psi}+S^{(\theta)}\bs{\psi}). 
\end{align}
Note that 
\begin{align*}
S^{(\theta)}(\bs{\psi}-S^{(\theta)}\bs{\psi}) &= -(\bs{\psi}-S^{(\theta)}\bs{\psi})\in \mathcal{H}_-^{(S)}, \\
S^{(\theta)}(\bs{\psi}+S^{(\theta)}\bs{\psi}) &= \bs{\psi}+S^{(\theta)}\bs{\psi} \in \mathcal{H}_+^{(S)}. 
\end{align*}
Since the dimension of the whole space of the DTQW is $|D|=2|E|$ and $\mathrm{dim}(\mathcal{H}_\pm^{(S)})=|E|$, we have, by Eq.~(\ref{3}), 
\[ \mathrm{dim}(\mathcal{H}_+^{(S)}\cap \mathcal{L}^{\bot})= |E|-(|V|-m_{-1}) \mathrm{\;and\;} \mathrm{dim}(\mathcal{H}_-^{(S)}\cap \mathcal{L}^{\bot})= |E|-(|V|-m_{1}). \]
So we obtain 
for any $\bs{\psi}\in \mathcal{H}_+^{(S)}\cap \mathcal{L}^{\bot}$, $U^{(w,\theta)}\bs{\psi}=-\bs{\psi}$ with the multiplicity $|E|-|V|+m_{-1}$, and 
for any $\bs{\psi}\in \mathcal{H}_-^{(S)}\cap \mathcal{L}^{\bot}$, $U^{(w,\theta)}\bs{\psi}=\bs{\psi}$ with the multiplicity $|E|-|V|+m_{1}$. 

On the other hand, we remark that if $\psi_\pm \in \mathcal{H}_\pm^{(S)}\cap \mathrm{ker}(d_A)$, then 
\[ d_B\psi_{\pm}(u)=d_AS^{(\theta)}\psi_{\pm}(u)=\pm d_A\psi_{\pm}(u)=0,\;\;(u\in V(G)). \]
Thus $\psi_\pm$ belongs also to $\mathrm{ker}(d_B)$. 
Moreover, 
$\mathcal{L}=\mathrm{span} \{\mathcal{A},\mathcal{B} \}$ implies that 
\[ \mathcal{L}^\bot=\mathrm{ker}(d_A)\cap \mathrm{ker}(d_B). \]
Therefore $\mathcal{H}_\pm^{(S)}\cap \mathrm{ker}(d_A)
	\subseteq \mathrm{ker}(d_A)\cap \mathrm{ker}(d_B)\cap \mathcal{H}^{(S)}_\pm
        =\mathcal{L}^\bot \cap \mathcal{H}^{(S)}_\pm$. 
So \segawa{putting  $\mathcal{M}_\pm$ as the eigenspaces of eigenvalues $\pm 1$, which are orthogonal to $\mathcal{L}$}, we can conclude that
\begin{equation}\label{beta}
\mathcal{M}_\pm=\mathrm{ker}(d_A) \cap \sato{\mathcal{H}_{\mp}^{(S)}}, 
\end{equation}
respectively. 
The proof is completed. 
\end{proof}
Let $G$ be a connected graph and $p:D(G)\to (0,1]$ a transition probability; that is, $\sum_{e:o(e)=u}p(e)=1$ for all $u\in V(G)$. 
If there exists a positive valued function $m:V(G)\to (0,\infty)$ such that 
\[ m(o(e))p(e)=m(t(e))p(\bar{e}), \]
for every $e\in D(G)$, then $p$ is said to be {\it reversible}, and $m$ is called \segawa{a} {\it reversible measure}. 
\begin{proposition}\label{gururi}
A random walk on $G$ with transition probability $p: D(G)\to [0,1]$ given by $p(e)=|w(e)|^2$, has a reversible measure 
if and only if there exists \segawa{a} $1$-form $\theta$ such that $1\in \mathrm{spec}(T^{(w,\theta)})$. 
\end{proposition}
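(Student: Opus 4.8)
The plan is to translate the spectral condition $1\in\mathrm{spec}(T^{(w,\theta)})$ into a single pointwise functional equation tying together $w$, $\theta$ and an eigenfunction, and then to read off reversibility directly from that equation. First I would reduce the eigenvalue to an $S^{(\theta)}$-invariance. Since $T^{(w,\theta)}$ is self-adjoint with $\mathrm{spec}(T^{(w,\theta)})\subseteq[-1,1]$ (Proposition~\ref{key}), the value $1$ lies in the spectrum precisely when (for finite $G$, as in Proposition~\ref{key}) there is a nonzero $g\in\ell^2(V)$ with $T^{(w,\theta)}g=g$. Writing $\psi=d_A^*g$ and using $d_B^*=S^{(\theta)}d_A^*$ from the proof of Proposition~\ref{key} together with $d_Ad_A^*=I_V$ from Lemma~\ref{projection}, one computes
\[ \langle g,T^{(w,\theta)}g\rangle=\langle d_A^*g,d_B^*g\rangle=\langle\psi,S^{(\theta)}\psi\rangle,\qquad \|\psi\|^2=\langle g,d_Ad_A^*g\rangle=\|g\|^2. \]
Hence $T^{(w,\theta)}g=g$ forces $\langle\psi,S^{(\theta)}\psi\rangle=\|\psi\|^2$; as $S^{(\theta)}$ is unitary, the equality case of Cauchy--Schwarz yields $S^{(\theta)}\psi=\psi$, and conversely $S^{(\theta)}\psi=\psi$ gives $T^{(w,\theta)}g=d_AS^{(\theta)}d_A^*g=d_A\psi=g$. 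Thus $1\in\mathrm{spec}(T^{(w,\theta)})$ exactly when some nonzero $g$ makes $\psi=d_A^*g$ lie in $\mathcal{H}_+^{(S)}$. Spelling this out through $\psi(e)=w(e)g(o(e))$ and the characterization~(\ref{alpha2}) of $\mathcal{H}_+^{(S)}$, the condition becomes, for all $e\in D(G)$,
\[ w(e)\,g(o(e))=e^{-i\theta(e)}\,w(\bar e)\,g(t(e)); \]
denote this identity by $(\ast)$.

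For the forward implication I would start from a solution $g\neq 0$ of $(\ast)$ and take squared moduli, obtaining $|w(e)|^2|g(o(e))|^2=|w(\bar e)|^2|g(t(e))|^2$, that is $p(e)\,|g(o(e))|^2=p(\bar e)\,|g(t(e))|^2$. Setting $m(v)=|g(v)|^2$, this is exactly the reversibility identity $m(o(e))p(e)=m(t(e))p(\bar e)$. It then remains to check $m>0$ everywhere: if $g(o(e))=0$, then $(\ast)$ and $w(\bar e)\neq 0$ force $g(t(e))=0$, so by connectedness of $G$ the vanishing set of $g$ is either empty or all of $V$; since $g\neq 0$ it is empty, and $m$ is a genuine reversible measure.

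For the converse I would take a reversible measure $m:V(G)\to(0,\infty)$ and reconstruct $(\theta,g)$ by setting $g=\sqrt{m}$ and defining the phase of $\theta$ through
\[ e^{-i\theta(e)}=\frac{w(e)\sqrt{m(o(e))}}{w(\bar e)\sqrt{m(t(e))}}. \]
The main obstacle here is that this prescription must actually produce a legitimate $1$-form. Its right-hand side has modulus $1$ if and only if $|w(e)|\sqrt{m(o(e))}=|w(\bar e)|\sqrt{m(t(e))}$, which is precisely reversibility, so a real $\theta(e)$ exists exactly because $m$ is reversible. The antisymmetry $\theta(\bar e)=-\theta(e)$ I would verify by substituting $\bar e$ into the formula and using $o(\bar e)=t(e)$, $t(\bar e)=o(e)$, which gives $e^{-i\theta(\bar e)}=e^{\,i\theta(e)}$; choosing real representatives consistently (handling the $e^{i\theta(e)}=-1$ case by convention) makes $\theta$ a $1$-form. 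With this $\theta$ and $g=\sqrt{m}>0$, equation $(\ast)$ holds by construction, so $d_A^*g\in\mathcal{H}_+^{(S)}$ and $T^{(w,\theta)}g=g$, giving $1\in\mathrm{spec}(T^{(w,\theta)})$. I expect the Cauchy--Schwarz equality-case step for $S^{(\theta)}$ and the consistent choice of $\theta$ on inverse arcs to be the only delicate points; everything else is a direct unwinding of the boundary operators $d_A,d_B$.
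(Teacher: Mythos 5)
Your proposal is correct, and its second half takes a genuinely different route from the paper. The first half is essentially the paper's argument in different clothing: where you extract $S^{(\theta)}\psi=\psi$ from the Cauchy--Schwarz equality case applied to $\langle\psi,S^{(\theta)}\psi\rangle=\|\psi\|^2$, the paper tracks when both inequalities in Eq.~(\ref{AcapB}) are equalities, concludes $\psi\in\mathcal{A}\cap\mathcal{B}$, and derives $d_A^*m=\pm d_B^*m$; both roads end at the same pointwise identity, your $(\ast)$, which is exactly the paper's extended detailed balance condition~(\ref{DBC}) with $g$ playing the role of $\sqrt{m}$ (more precisely, with $m$ replaced by the eigenfunction). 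The real divergence is in the converse. The paper fixes a spanning tree $\mathbb{T}(G)$, reduces the eDBC to a holonomy condition $\int_c\mathrm{arg}(\widetilde{w})\in 2\pi\mathbb{Z}$ over the fundamental cycles $C^{(0)}$ of Eq.~(\ref{fundamental_cycle}), and then adjusts $\theta$ only on the non-tree arcs to kill the holonomy. You instead define $\theta$ directly on every arc by the explicit formula $e^{-i\theta(e)}=w(e)\sqrt{m(o(e))}\big/\bigl(w(\bar e)\sqrt{m(t(e))}\bigr)$, observing that reversibility is precisely the statement that the right-hand side has modulus one, and that antisymmetry follows by choosing one representative per edge and extending by $\theta(\bar e):=-\theta(e)$ (the $e^{i\theta(e)}=-1$ worry you raise is vacuous under this convention). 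Your construction is shorter and more elementary, and it proves the proposition as stated; what the paper's detour buys is a $1$-form vanishing on the spanning tree, which is the normalization actually used for the crystal-lattice setup in Sect.~3, and the cycle criterion itself, which feeds the four-case classification (i)--(iv) behind Lemma~\ref{multm}. You also supply a detail the paper leaves implicit: strict positivity of $m=|g|^2$, via connectedness of $G$ and $w(e)\neq 0$ propagating the zero set of $g$, so the eigenfunction genuinely yields a reversible \emph{measure}. One small caveat to keep: your equivalence ``$1\in\mathrm{spec}(T^{(w,\theta)})$ iff an eigenvector exists'' uses finiteness of $G$, which you correctly flag and which matches the standing assumption of Proposition~\ref{key}.
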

\begin{proof}
We first show that \segawa{a} $1$-form $\theta$ exists such that $1\in \mathrm{spec}(T^{(w,\theta)})$ if and only if 
\begin{equation}\label{DBC}
m(o(e))\widetilde{w}(e)=m(t(e))\widetilde{w}(\bar{e}), 
\end{equation}
\segawa{where} $\widetilde{w}(e)= w(e)e^{i\theta(e)/2}$.
\segawa{Equation~(\ref{DBC}) is an extended detailed balanced condition (eDBC). }
The multiplicities \segawa{$m_{\pm 1}$ are} positive if and only if \segawa{both} equalities in Eq.~(\ref{AcapB}) hold. 
Note that the first equality in Eq.~(\ref{AcapB}) holds if and only if 
\[ || \Pi_\mathcal{A}S^{(\theta)}\Pi_\mathcal{A}\psi ||^2=|| S^{(\theta)}\Pi_\mathcal{A}\psi ||^2 
	\Leftrightarrow S^{(\theta)}\Pi_\mathcal{A}\psi\in \mathcal{A} 
        \Leftrightarrow \Pi_\mathcal{A}\psi\in \mathcal{B}. \]
On the other hand, the second equality in Eq.~(\ref{AcapB}) holds if and only if 
\[ || \Pi_\mathcal{A}\psi||^2 = ||\segawa{\psi}||^2
	\Leftrightarrow \psi\in \mathcal{A}. \]
Combining these, we find that $|\lambda|=1$ if and only if $\psi\in \mathcal{A}\cap \mathcal{B}$. 
Since $\psi\in \mathcal{A}$, \segawa{specifying an} eigenfunction \segawa{$m\in \ell^2(V)$} of $T^{(w,\theta)}$ for \segawa{the} eigenvalue $\lambda=1$ or $\lambda=-1$, we can write $\psi=d_A^*m$. 
On the other hand, since $\psi\in \mathcal B$, there exists $h\in \ell^2(V)$ such that 
\[d_B^*h=d_A^*m.\] 
\HS{Letting $d_A$ operate to both sides, we have $h=\pm m$ by Lemma~\ref{projection} and Definition~\ref{discriminant}.} 
Thus $\lambda=\pm 1$ if and only if 
\begin{equation}\label{1227}
d_A^*m=\pm d_B^*m. 
\end{equation}
Inserting the definitions of $d_A^*$ and $d_B^*$ in Eq.~(\ref{bounded}) into Eq.~(\ref{1227}), we find that $\lambda=1$ if and only if 
\begin{equation}
m(o(e))w(e)=m(t(e))w(\bar{e})e^{-i\theta(e)}, 
\end{equation}
for every $e\in D(G)$. Note that this condition is equivalent to the eDBC in Eq.~(\ref{DBC}). 

\segawa{We now show that there exists a $1$-form $\theta$ such that the eDBC holds if and only if $\{|w(e)|^2:e\in D(G)\}$ has the reversible measure. }
Taking its square modulus both sides of Eq.~(\ref{DBC}), we have the sufficiency. 
\HS{We prove the opposite direction; that is, if $\{|w(e)|^2:e\in D(G)\}$ has the reversible measure, 
then there exists a $1$-form $\theta$ such that the eDBC holds. 
Let us consider a spanning tree $\mathbb{T}(G)$ of $G$ and define $C(G)$ as the set of all cycles in $G$. }
\segawa{For $j\in\{1,\dots,r\}$, 
we denote $c_j$ as the cycle generated by adding edge $e_j\in E(G)\smallsetminus E(\mathbb{T}(G))$ to the spanning tree $\mathbb{T}(G)$. 
Here 
\begin{equation*} 
r=|E(G)\smallsetminus E(\mathbb{T}(G))|=|E(G)|-(|V(G)|-1). 
\end{equation*} 
The set of the fundamental cycles is denoted by 
	\begin{equation}\label{fundamental_cycle}C^{(0)}\equiv \{ c_1,c_2,\dots,c_r \}\end{equation}
Thus a one-to-one correspondence 
between $C^{(0)}$ and $E(G)\smallsetminus E(\mathbb{T}(G))$ holds. }
For any $c\in C(G)$, there exist positive integers $\{n_1,n_2,\dots,n_r\}$ such that $c=\sum_{j=1}^{r}n_j c_j$. 
Thus the subset $C^{(0)}$ is a minimum generator of $C(G)$. 
\segawa{We remark that the eDBC holds if and only if for any cycle $c=(e_1,\dots,e_n)\in C^{(0)}$, }
\begin{align}
m(o(e_1)) &= \frac{\widetilde{w}(\bar{e}_1)}{\widetilde{w}(e_1)}m(o(e_2)) \notag  \\
	  &=\frac{\widetilde{w}(\bar{e}_1)\widetilde{w}(\bar{e}_2)}{\widetilde{w}(e_1)\widetilde{w}(e_2)}m(o(e_3))=\cdots 
	  = \prod_{j=1}^n \frac{\widetilde{w}(\bar{e}_j)}{\widetilde{w}(e_j)}m(o(e_1)). \label{mm}
\end{align}
\segawa{Under the assumption that a random walk $\{|w(e)|^2;e\in D(G)\}$ is reversible, Eq.~(\ref{mm}) is equivalent to }
\begin{align} 
\int_{c}\mathrm{arg}(\widetilde{w}) &\in 2\pi \mathbb{Z}, \label{arguments}
\end{align}
where $\int_{c}\mathrm{arg}(\widetilde{f})=\sum_{j=1}^n \left\{\mathrm{arg}(\widetilde{f}(e_j))-\mathrm{arg}(\widetilde{f}(\overline{e_j}))\right\}$. 
\segawa{For a given $w$, the $1$-form $\theta$ can be adjusted to satisfy Eq.~(\ref{arguments}) in the following way:} 
for $c_j=(e_1^{(j)},\dots,e_{k_j}^{(j)})\in C^{(0)}$, 
\begin{equation}
\theta(e_i^{(j)})
	=\begin{cases}
        \int_{c_j}\mathrm{arg}(w) & \text{; $e_i^{(j)}=e_j$,} \\ 
        0 & \text{; otherwise.} 
        \end{cases}
\end{equation}
\end{proof}
Similarly to the proof of Proposition \ref{gururi}, we can show that $\lambda=-1$ if and only if the {\it signed} eDBC holds: 
\[ m(o(e))\widetilde{w}(e)=-m(t(e))\widetilde{w}(\bar{e}). \]
\segawa{We conclude the above discussions by considering-four situations: }
\begin{enumerate}
\renewcommand{\labelenumi}{(\roman{enumi})}
\item\label{case1} $G$ is bipartite, $\{|w(e)|^2:e\in D(G)\}$ is reversible and $\int_{c}\mathrm{arg}(\widetilde{w})\in 2\pi\mathbb{Z}$ for any closed path $c$; 
\item\label{case2} $G$ is non-bipartite, $\{|w(e)|^2:e\in D(G)\}$ is reversible and $\int_{c}\mathrm{arg}(\widetilde{w})\in 2\pi\mathbb{Z}$ for any closed path $c$;
\item\label{case3} $G$ is non-bipartite, $\{|w(e)|^2:e\in D(G)\}$ is reversible and for any closed path $c$, 
\[\int_{c}\mathrm{arg}(\widetilde{w}) \in  
	\begin{cases}
	2\pi\mathbb{Z} & \text{; $c$ is even length closed path, } \\ 
	2\pi(\mathbb{Z}+1/2) & \text{; $c$ is odd length closed path; } 
	\end{cases}
\]
\item\label{case4} otherwise. 
\end{enumerate}
\segawa{The above situations were also considered in Higuchi and Shirai~\cite{HS}, who discussed the spectrum of a twisted random walk on a para-line graph of $G$. }
\segawa{Based on Ref.~\cite{HS}}, we \segawa{state} the following lemma. 
\begin{lemma}\label{multm}
Let $m_{\pm 1}$ be the multiplicities of eigenvalues $\pm 1$ of $T^{(w,\theta)}$, respectively. Then we have 
\begin{equation}
	(m_1,m_{-1})=
        	\begin{cases}
                (1,1) & \text{; case (i),} \\
                (1,0) & \text{; case (ii),} \\
                (0,1) & \text{; case (iii),} \\
                (0,0) & \text{; case (iv).} 
                \end{cases}
\end{equation}
\end{lemma}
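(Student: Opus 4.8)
The plan is to read off $m_{\pm1}$ from the (signed) extended detailed balance conditions already obtained around Proposition~\ref{gururi}, and then to use the connectivity of $G$ to force each multiplicity down to at most one. I would organize the argument into a characterization step for each of the two extreme eigenvalues, a multiplicity-one step, and a final matching against the four situations.

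First I would recall that, by the computation in the proof of Proposition~\ref{gururi}, an eigenfunction $m\in\ell^2(V)$ of $T^{(w,\theta)}$ for $\lambda=1$ is exactly a solution of $d_A^*m=d_B^*m$, i.e.\ of the eDBC $m(o(e))\widetilde{w}(e)=m(t(e))\widetilde{w}(\bar e)$, which for a suitable $\theta$ exists precisely when $\{|w(e)|^2\}$ is reversible and $\int_c\mathrm{arg}(\widetilde{w})\in2\pi\mathbb{Z}$ for every closed path $c$. Dually, an eigenfunction for $\lambda=-1$ solves the signed relation $d_A^*m=-d_B^*m$, i.e.\ the signed eDBC $m(o(e))\widetilde{w}(e)=-m(t(e))\widetilde{w}(\bar e)$.

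The core computation, and the step I expect to be the main obstacle, is the sign bookkeeping for $\lambda=-1$. I would propagate the signed eDBC along a closed path $c=(e_1,\dots,e_n)$; since $m(o(e_1))\neq0$ this yields the consistency identity $(-1)^n\prod_{j=1}^n\widetilde{w}(\bar e_j)/\widetilde{w}(e_j)=1$. Taking moduli recovers reversibility (Kolmogorov's cycle criterion for $p=|w|^2$), while comparing arguments gives $\int_c\mathrm{arg}(\widetilde{w})\in n\pi+2\pi\mathbb{Z}$. Thus $-1\in\mathrm{spec}(T^{(w,\theta)})$ iff $\{|w(e)|^2\}$ is reversible and $\int_c\mathrm{arg}(\widetilde{w})$ lies in $2\pi\mathbb{Z}$ for even-length $c$ and in $2\pi(\mathbb{Z}+1/2)$ for odd-length $c$; it is precisely the factor $(-1)^n$ that converts the parity of $c$ into this dichotomy.

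To upgrade ``eigenvalue present'' into ``multiplicity exactly one,'' I would use that $G$ is connected. Each of the (signed) eDBCs is a first-order recurrence along arcs, so fixing $m$ at one base vertex and propagating along the spanning tree $\mathbb{T}(G)$ determines $m$ on all of $V(G)$, the fundamental-cycle conditions being exactly what render the propagation consistent; hence any nonzero solution is unique up to scale and $m_{\pm1}\in\{0,1\}$. Finally I would match the four situations. Reversibility is necessary for either extreme eigenvalue, so its failure (or a residual phase obstruction) puts us in case~(iv) with $(0,0)$. If $G$ is bipartite every closed path is even, so both characterizations collapse to $\int_c\mathrm{arg}(\widetilde{w})\in2\pi\mathbb{Z}$, yielding $(1,1)$ in case~(i). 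If $G$ is non-bipartite the odd-cycle requirements for $\lambda=1$ and $\lambda=-1$ are $2\pi\mathbb{Z}$ and $2\pi(\mathbb{Z}+1/2)$ respectively and hence disjoint, so at most one extreme eigenvalue survives: case~(ii) retains $\lambda=1$ giving $(1,0)$, and case~(iii) retains $\lambda=-1$ giving $(0,1)$.
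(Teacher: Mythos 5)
Your proof is correct, but it is worth noting that the paper itself offers no proof of this lemma at all: the authors state it ``based on Ref.~\cite{HS}'' and defer the argument to Higuchi--Shirai's work on twisted transition operators. What you have done is supply the natural in-house completion of the setup the paper builds in Proposition~\ref{gururi}: there the authors show $\lambda=1$ is equivalent to the eDBC $d_A^*m=d_B^*m$ and remark that $\lambda=-1$ is equivalent to the signed variant $d_A^*m=-d_B^*m$, and your contribution is precisely the two missing steps --- the parity bookkeeping (the factor $(-1)^n$ along a closed path of length $n$ turning the sign of the eigenvalue into the even/odd dichotomy $\int_c\mathrm{arg}(\widetilde{w})\in 2\pi\mathbb{Z}$ versus $2\pi(\mathbb{Z}+1/2)$, which is exactly what separates cases (ii) and (iii)) and the connectivity argument forcing $m_{\pm 1}\le 1$, since the first-order recurrence $m(o(e))\widetilde{w}(e)=\pm m(t(e))\widetilde{w}(\bar e)$ propagated along a spanning tree determines $m$ up to one scalar. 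This buys a self-contained proof where the paper relies on an external citation, at the cost of redoing what \cite{HS} provides. One small step you assert rather than prove: in deriving the consistency identity you take $m(o(e_1))\neq 0$. This does hold, but it deserves a line: since $w(e)\neq 0$ for every arc, the recurrence shows that if $m$ vanishes at one vertex it vanishes at every neighbor, hence everywhere by connectedness of $G$; so a nonzero solution is nowhere vanishing, and in particular $|m|^2$ is a genuine (positive) reversible measure when you take moduli, which is what your appeal to Kolmogorov's cycle criterion needs. With that sentence added, the case matching --- bipartiteness collapsing both phase conditions to $2\pi\mathbb{Z}$ in case (i), the disjointness of the odd-cycle conditions killing one eigenvalue in cases (ii) and (iii), and both failing in case (iv) --- is complete and agrees with the stated table of $(m_1,m_{-1})$.
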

\subsection{\segawa{Geometric expression of the subspace $\mathcal{L}^\bot$ in the Grover walk case}} 
\quad \segawa{We investigate a special case of \segawa{the} twisted Szegedy walk with $\theta(e)=0$, $w(e)=1/\sqrt{\mathrm{deg}(o(e))}$, which reduces to the Grover walk. }
We denote the time evolution of the Grover walk by $U^{(w,\theta)}=U_{Grover}$. 
Let $P$ be the probability transition matrix of the symmetric random walk on $G$. 
For any $u,v\in V(G)$, the symmetric means $(P)_{u,v}=\bs{1}_{\{(u,v)\in D(G)\}}/\mathrm{deg}(u)$. 
The reversible distribution of the symmetric random walk $\bs{\pi}: V(G)\to (0,1)$ is given by $\bs{\pi}(u)=\mathrm{deg}(u)/\sum_{v\in V(G)}\mathrm{deg}(v)$. 
Since $T\equiv (T^{(w,\theta)})_{u,v}=\bs{1}_{\{(u,v)\in D(G)\}}/\sqrt{\mathrm{deg}(u)\mathrm{deg}(v)}$ in this setting, we can notice that 
\begin{equation}\label{stadist}
T= \mathfrak{D}^{-1}P\mathfrak{D}, 
\end{equation}
where $\mathfrak{D}=\mathrm{diag}[\sqrt{\bs{\pi}(u)};u\in V(G)]$; 
thus $\mathrm{spec}(P)=\mathrm{spec}(T)$. 
Moreover if $\bs{\eta}$ is an eigenvector of $P$, then $\mathfrak{D}^{-1}\bs{\eta}$ is the eigenvector of $T$ for the same eigenvalue. 

Now let us characterize the eigenspace of the Grover walk corresponding to $\mathcal{L}^{\bot}$ in the above lemma by the cycles of $G$. 
\HS{First, we introduce some new notations.} 
\segawa{We denote the sets of all essential even and odd cycles by $C_e$ and $C_o$, respectively.} 
We also define $C_{o-o}$ as the set of Euler closed path consisting of \segawa{two distinct odd cycles and their so-called bridge}, i.e., 
\begin{multline*} 
C_{o-o}\equiv \{ c=(c_1,p,c_2,p^{-1})\in P(G):\; c_1,c_2\in \segawa{C_o}, p\in \{P_0(G),\emptyset \}, \\  \;o(c_1)=t(c_1)=o(p),\; o(c_2)=t(c_2)=t(p),\; V(c_1)\cap V(c_2)\subseteq \{\emptyset, o(c_1) \}\}. 
\end{multline*} 
\segawa{
\begin{definition}
For $p=(e_1,e_2,\dots,e_n)\in P(G)$, let $\bs{\gamma}, \bs{\tau}: P(G)\to \ell^2(D(G))$ be 
\begin{align}
\bs{\gamma}(p) &=\sum_{j=1}^{n} \left(\delta_{e_j}-\delta_{\bar{e}_j}\right), \label{wawa}\\
\bs{\tau}(p) &=\sum_{j=1}^{n} (-1)^j\left(\delta_{e_j}+\delta_{\bar{e}_j}\right). \label{vava}
\end{align}
\end{definition}
}
\HS{The following theorem relates the subspace $\mathcal{L}^{\bot}$ to a geometric structure of the graph. }
\begin{theorem}\label{cycles}
Define the eigenspaces of $U_{Grover}$ by 
\begin{equation*}
\mathcal{M}_+\equiv \mathcal{L}^{\bot} \cap \mathcal{H}_-^{(S)}\; \mathrm{and}\; 
\mathcal{M}_-\equiv \mathcal{L}^{\bot} \cap \mathcal{H}_+^{(S)}
\end{equation*}
with eigenvalues $1$ and $-1$, respectively. 
Then we have 
	\begin{align}
        \mathcal{M}_+ &= \sum_{c\in C(G)}\mathbb{C}\bs{\gamma}(c), \label{hassei1}\\
	\mathcal{M}_- &= \sum_{c\in C_e \cup C_{o-o}}\mathbb{C}\bs{\tau}(c). \label{hassei2}
	\end{align}
Here $\mathrm{dim}(\mathcal{M}_+)=|E(G)|-|V(G)|+1$, and $\mathrm{dim}(\mathcal{M}_-)=|E(G)|-|V(G)|+\bs{1}_{\{G \mathrm{\;is\;bipartite}\}}$. 
In particular, 
defining $CP_e$ as the set of all even-length closed paths, we have 
\[ \sum_{c\in CP_e}\mathbb{C}\bs{\tau}(c)=\mathcal{M}_-, \;\;\sum_{c\in CP_e}\mathbb{C}\bs{\gamma}(c)\subset \mathcal{M}_+. \]
\end{theorem}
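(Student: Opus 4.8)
The plan is to specialize the framework to the Grover parameters $\theta\equiv0$ and $w(e)=1/\sqrt{\mathrm{deg}(o(e))}$ and to turn the two pieces $\mathcal{M}_\pm$ into purely combinatorial objects. With $\theta\equiv0$ the shift eigenspaces are the antisymmetric and symmetric arc functions, $\mathcal{H}_-^{(S)}=\{\psi:\psi(\bar e)=-\psi(e)\}$ and $\mathcal{H}_+^{(S)}=\{\psi:\psi(\bar e)=\psi(e)\}$, while $(d_A\psi)(v)=\mathrm{deg}(v)^{-1/2}\sum_{e:o(e)=v}\psi(e)$. As already noted in the proof of Proposition~\ref{key}, on $\mathcal{H}_\pm^{(S)}$ the conditions $\psi\in\mathrm{ker}(d_A)$ and $\psi\in\mathrm{ker}(d_B)$ coincide. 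Hence $\mathcal{M}_+=\mathcal{L}^\bot\cap\mathcal{H}_-^{(S)}$ is exactly the space of antisymmetric arc functions that are divergence free at every vertex, i.e.\ the circulation (cycle) space of $G$, and $\mathcal{M}_-=\mathcal{L}^\bot\cap\mathcal{H}_+^{(S)}$ is the space of symmetric arc functions whose sum over the arcs leaving each vertex vanishes; identifying a symmetric arc function with a function on undirected edges, the latter is the kernel of the unsigned incidence matrix of $G$.

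The next step is the single computation that drives every inclusion ``$\subseteq$''. For a closed walk $c=(e_1,\dots,e_n)$ one has at each vertex $v$
\[ \sum_{e:o(e)=v}\bs{\gamma}(c)(e)=\#\{j:o(e_j)=v\}-\#\{j:t(e_j)=v\}=0, \]
since a closed walk departs from and arrives at $v$ equally often; together with $S^{(\theta)}\bs{\gamma}(c)=-\bs{\gamma}(c)$ this gives $\bs{\gamma}(c)\in\mathcal{M}_+$ for \emph{every} closed walk. For $\bs{\tau}$ the same bookkeeping yields $\sum_{e:o(e)=v}\bs{\tau}(c)(e)=\sum_{j:o(e_j)=v}(-1)^j+\sum_{j:t(e_j)=v}(-1)^j$, and the index bijection $j\mapsto j+1\ (\mathrm{mod}\ n)$ between arrivals and departures flips the factor $(-1)^j$ \emph{precisely} when $n$ is even; combined with $S^{(\theta)}\bs{\tau}(c)=\bs{\tau}(c)$ this shows $\bs{\tau}(c)\in\mathcal{M}_-$ for every even-length closed walk, in particular for all $c\in C_e\cup C_{o-o}\subseteq CP_e$. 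This one observation simultaneously establishes ``$\subseteq$'' in Eqs.~(\ref{hassei1}) and (\ref{hassei2}) as well as the two inclusions of the final clause.

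For the reverse inclusions I would argue by dimension, using that Proposition~\ref{key} and Lemma~\ref{multm} already give $\mathrm{dim}\,\mathcal{M}_+=|E|-|V|+m_1$ and $\mathrm{dim}\,\mathcal{M}_-=|E|-|V|+m_{-1}$, where the symmetric walk is reversible with positive real weights, so Cases~(i) and (ii) of Lemma~\ref{multm} force $m_1=1$ and $m_{-1}=\bs{1}_{\{G\text{ is bipartite}\}}$. Fixing a spanning tree with fundamental cycles $c_1,\dots,c_r$, $r=|E|-|V|+1$, the vectors $\bs{\gamma}(c_j)$ are independent because each $c_j$ alone contains its defining non-tree edge; there are $r=\mathrm{dim}\,\mathcal{M}_+$ of them, so they span $\mathcal{M}_+$, proving Eq.~(\ref{hassei1}) with its dimension. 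For $\mathcal{M}_-$ in the bipartite case the proper $2$-colouring $\epsilon:V\to\{\pm1\}$ gives an isomorphism $\phi\mapsto(e\mapsto\epsilon(o(e))\phi(e))$ of $\mathcal{M}_+$ onto $\mathcal{M}_-$ sending each $\bs{\gamma}(c)$ to $\pm\bs{\tau}(c)$, whence the even fundamental cycles span $\mathcal{M}_-$. In the non-bipartite case I would split the fundamental cycles into $r_e$ even and $r_o\ (\geq1)$ odd ones; the even ones give $r_e$ independent vectors $\bs{\tau}(c_j)$, and joining one fixed odd fundamental cycle to each of the other $r_o-1$ odd ones by a tree bridge produces dumbbells of the form required by $C_{o-o}$ whose $\bs{\tau}$-images are distinguished by their defining non-tree edges, for a total of $r_e+(r_o-1)=|E|-|V|=\mathrm{dim}\,\mathcal{M}_-$ independent vectors.

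The step I expect to be the main obstacle is this last one, because the class $C_{o-o}$ requires the two odd cycles to be essentially vertex disjoint and joined by a simple bridge, whereas two fundamental odd cycles may share vertices. The fix I anticipate is a graph-theoretic reduction: peel the shared even-length portions of overlapping odd cycles into even-cycle contributions (already available in $C_e$) and replace the overlapping odd cycles by essentially disjoint odd representatives, then verify that this neither leaves $\mathcal{M}_-$ nor disturbs the non-tree-edge bookkeeping used for independence. Once this is in place the dimension count closes both reverse inclusions. Finally, every $c\in CP_e$ is a closed walk, so $\sum_{c\in CP_e}\mathbb{C}\bs{\gamma}(c)\subseteq\mathcal{M}_+$ is immediate from the computation above; the inclusion is in general proper since the odd cycles needed to exhaust the circulation space lie outside $CP_e$.
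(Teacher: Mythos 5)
Your proposal follows essentially the same route as the paper's proof: membership of $\bs{\gamma}(c)$ and $\bs{\tau}(c)$ via the (anti)symmetry characterization of $\mathcal{H}_{\pm}^{(S)}$ combined with the kernel condition of $d_A$ (Eqs.~(\ref{alpha}), (\ref{alpha2}), (\ref{beta})), then the reverse inclusions by a dimension count against Proposition~\ref{key} and Lemma~\ref{multm} using fundamental cycles, the bipartite dichotomy, and odd--odd dumbbells joined through the spanning tree. The one step you flag as delicate --- two odd fundamental cycles may share vertices, so $(c_i,p_{i,j},c_j,p_{i,j}^{-1})$ need not literally belong to $C_{o-o}$ --- is exactly the point the paper asserts without proof, and your proposed repair (peeling shared even-length portions into $C_e$ contributions, or equivalently passing to $CP_e$, where the even closed path $(c_i,p,c_j,p^{-1})$ works with no disjointness hypothesis) is sound and consistent with the theorem's final clause.
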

\begin{proof}
From Eqs.~(\ref{alpha}) and (\ref{beta}), we observe that for any $c\in C(G)$, 
we have $\bs{\gamma}(c)\in \mathcal{M}_+$, that is, $U\bs{\gamma}(c)=\bs{\gamma}(c)$. 
\segawa{Let $C^{(0)}$ be the set of the fundamental cycles defined by Eq.~(\ref{fundamental_cycle}).} 
For any $c\in C(G)$, 
there exist positive integers $\{n_1,n_2,\dots,n_r\}$ such that $\bs{\gamma}(c)=\sum_{j=1}^{r}n_j \bs{\gamma}(c_j)$. 
Therefore, we arrive at $\mathcal{M}_+=\mathrm{span}\{ \bs{\gamma}(c); c\in C^{(0)} \}=\mathrm{span}\{ \bs{\gamma}(c); c\in C(G) \}$. 

Next, from Eqs.~(\segawa{\ref{alpha2}}) and (\ref{beta}), for any $c\in C_e(G)\cup C_{o-o}$, 
we readily confirm $\bs{\tau}(c)\in \mathcal{M}_-$. So we have $U\bs{\tau}(c)=-\bs{\tau}(c)$. 
Note that $C^{(0)}\subseteq C_e(G)$ if and only if the graph is \segawa{bipartite}. 
In this case, $\mathcal{M}_-=\mathrm{span}\{\bs{\tau}(c): c\in C^{(0)}\}$. 
Now we consider that $C^{(0)}$ also contains odd cycles. 
\sato{Let odd cycles be denoted by $c_1,\dots,c_K$ and the even cycles by $c_{K+1},\dots, c_{r}$ $(K\leq r)$.} 
We remark that for any \segawa{cycles} $c_i,c_j\in C_o(G)$, there exists a simple path $p_{i,j}\in P_0(G)$ such that $(c_i,p_{i,j},c_j,p_{i,j}^{-1})\in C_{o-o}$. 
Then 
\begin{equation} 
\mathcal{M}_- =\mathrm{span}\{\{\bs{\tau}(c_1,p_{1,j},c_{j},p_{1,j}^{-1}): \;2\leq j\leq K \} \cup \{ \bs{\tau}(c_j): K<j\leq r  \}\}. 
\end{equation}
Since all the $\bs{\tau}(c_1,p_{1,j},c_{j},p_{1,j}^{-1})$ $(2\leq j\leq K)$ and $\bs{\tau}(c_j)$ $(K+1 \leq j \leq r)$ are linearly independent, 
we have \[ \mathrm{dim}(\mathcal{M}_-)=K-1+(r-K)=|E|-|V|. \] 
We can easily check that $\bs{\tau}(p)\in \mathcal{M}_-$ and $\bs{\gamma}(p)\in \mathcal{M}_+$ for any $p\in CP_e$. 
Since $C_{o-o},C_e\subset CP_e$, we obtain the conclusion.  
\end{proof}


\section{Grover walk on crystal lattice}
\subsection{Setting}
We define a partition $\pi: G\to G^{(o)}=(V^{(o)},D^{(o)})$ satisfying the following conditions: 
\begin{enumerate}
\item $V^{(o)}=\pi(V(G))=\{ V_1,V_2,\dots, V_r \}$ with $V_i\cap V_j=\emptyset$ $(i\neq j)$. 
\item for $e\in D(G)$, $\pi(o(e))=o(\pi(e))$ and $\pi(t(e))=t(\pi(e))$.
\item $D_u \stackrel{\pi}{\to } D_{\pi(u)}$: \segawa{a bijection}, \\ \;\;\;\; where $D_u=\{e\in D(G): o(e)=u \}$. 
\end{enumerate}
\begin{remark}
Under the assumption of (3), $\mathrm{deg}(u)=\mathrm{deg}(\pi(u))$ for all $u\in V(G)$. 
\end{remark}
In particular, if there exists an abelian \segawa{group} $\Gamma\subset \mathrm{Aut}(G)$ such that 
\begin{align*} 
	V^{(o)} &= \{\Gamma\mathrm{-orbit\;of\;}G\}, \\
	(\pi(u),\pi(v))\in D^{(o)} &\Leftrightarrow \mathrm{\;there\;exists\;} g\in\Gamma \mathrm{\;such\;that\;} (u,gv)\in D(G),
\end{align*}
then 
$G$ is called \segawa{a} crystal lattice. 
We denote a spanning tree by $\mathbb{T}^{(o)}$ of $G^{(o)}=\Gamma\backslash G$, and let $E(G^{(o)})\smallsetminus E(\mathbb{T}^{(o)})=\{e_1,\dots,e_r\}$. 
We assign \segawa{a} $1$-form $\theta$ such that $\theta(e)=0$ if and only if $e\in D(\mathbb{T}^{(o)})$. 
Prepare $r$ vectors $\hat{\theta}_1,\dots,\hat{\theta}_r\in \mathbb{R}^d$ $(d\leq r)$, 
where $r=|E(G^{(o)})\smallsetminus E(\mathbb{T}^{(o)})|$. 
\segawa{Later in the discussion, we will specify $\theta(e_j)=-\theta(\bar{e}_j)=\langle k,\hat{\theta}_j \rangle$ for $k\in\mathbb{R}^d$ and 
$e_j\in E(G^{(o)}\smallsetminus \mathbb{T}^{(o)})$.  }

\segawa{Given a finite graph $G^{(o)}$, we construct an abelian covering graph $G$. }
\segawa{We prepare $r$-unit vector $\hat{\theta}_1,\dots,\hat{\theta}_r\in \mathbb{R}^d$ $(d\leq r)$. 
with $\mathrm{rank}[\hat{\theta}_1,\dots,\hat{\theta}_r]=d$. } 
First, we copy the quotient graph $G^{(o)}$ to each lattice 
$L=\mathbb{Z}\hat{\theta}_1+\cdots+\mathbb{Z}\hat{\theta}_r\subset \mathbb{R}^d$. 
Here the vertex $u$ of $G^{(o)}$ at $\mathbf{x}\in L$ is labeled $(\mathbf{x},u)$. 
Let $\mathbb{T}^{(o)}$ be a spanning tree of $G^{(o)}$. 
Secondly, we rewire the terminus of each arc $e_j\in D(G^{(o)}\smallsetminus \mathbb{T}^{(o)})$ at $\mathbf{x}\in L$ to the terminus of its neighbor
located in $\mathbf{x}+\hat{\theta}_j$; \segawa{for instance} 
if $t(e_j)=(\mathbf{x},v)$ in the first step, then, $t(e_j)$ \segawa{becomes} $(\mathbf{x}+\hat{\theta}_j,v)$ in the second step. 
\segawa{This procedure is repeated for each lattice $\mathbf{x}\in L$ to obtain a covering graph $G$.} 
In particular, 
when the transformation group $\Gamma$ with $G^{(o)}=\Gamma\backslash G$ is the 1-homology group $H_1(G^{(o)},\mathbb{Z})$, 
$G$ is called \segawa{the} maximal abelian covering graph of $G^{(o)}$. 
In other words, when $\{\hat{\theta}_j\}_{j=1}^r$ are linearly independent, then $G$ is the maximal abelian covering graph. 
\segawa{The fundamental domain $F^{(o)}$ of the abelian covering graph $G$ is denoted by 
	\begin{multline} F^{(o)}=\{\mathrm{the \; spanning \; tree \;} \mathbb{T}^{(o)} \mathrm{\;embedded \; on \;}\bs{o} \in L\} \\
        	\cup \{\mathrm{the \; rewired \; arcs \;from \;} D(G^{(o)}\smallsetminus \mathbb{T}^{(o)}) \mathrm{\;on \;} \bs{o}\in L\}. 
        \end{multline}}
\segawa{We take $V^{(o)}=V(F^{(o)})$ and $D^{(o)}=V(F^{(o)})$. }
\HS{In other words, $\sigma_1\neq \sigma_2\in\mathrm{Aut}(G)$, 
\[
\sigma_1(V(F))\cap \sigma_2(V(F)) = \emptyset, \;\;\sigma_1(D(F))\cap \sigma_2(D(F)) = \emptyset
\]
and $V(G)=\bigcup_{\sigma\in\mathrm{Aut}(G)}\sigma(V(F))$,\;$D(G)=\bigcup_{\sigma\in\mathrm{Aut}(G)}\sigma(D(F))$. 
}
\segawa{By the above observation, given a crystal lattice $G=(V,D)$, we have 
 \[ V(G)\cong L\times V^{(o)} \;\mathrm{and}\; D(G)\cong L\times D^{(o)}, \] 
respectively. }

We remark that the coin operator $C$ is reexpressed as $\bigoplus_{u\in V(G)}H_u$, 
where $H_u$ is so called the Grover coin operator on $\mathcal{H}_u$ assigned at the vertex $u$: 
\[ (H_u)_{e,f}=\bs{1}_{\{o(e)=o(f)=u\}}\left( \frac{2}{\mathrm{deg}(u)}-\delta_{e,f} \right). \]
We assume that 
$(H_u)_{e,f}=(H_{\sigma(u)})_{\sigma(e),\sigma(f)} \equiv (H^{(o)}_{\pi(u)})_{e_o,f_o}$ 
for any $e\in \pi^{-1}(e_o)$, $f\in \pi^{-1}(f_o)$, and $\sigma\in \Gamma$. 
Let $\Psi_n:L\times V^{(o)} \to \ell^2(L\times D^{(o)})$ at time $n\in \mathbb{N}$ be denoted by $\Psi_n(\mathbf{x},v)=\Pi_{\mathcal{H}_{v}}U^n\Psi_0$. 
Here $\Psi_0\in \ell^2(L\times D^{(o)})$ is \HS{an} initial state. 
From Eq.~(\ref{analogue}), we have 
\begin{equation}\label{receq}
\Psi_{n}(\mathbf{x},v)=\sum_{f\in D^{(o)}:t(f)=v}P_f\Psi_{n-1}\left(\mathbf{x}-\hat{\theta}(f),o(f)\right),  
\end{equation}
where 
\segawa{
 \begin{equation*}
 \hat{\theta}(f)=
 \begin{cases} 
 0 & \text{: $f\in E(\mathbb{T}^{(o)})$,} \\  \hat{\theta}_j & \text{: $f=e_j\in E(G^{(o)}\smallsetminus\mathbb{T}^{(o)}),\;\;(j\in \{1,\dots,r\})$. }
 \end{cases} 
 \end{equation*}
}
\HS{We define the finding distribution $\mu_n:L\to [0,1]$ by 
\[ \mu_n(\mathbf{x})=\sum_{u\in V^{(o)}}\nu_n(\mathbf{x},u), \]
and we denote by $X_n$ a random variable following $P(X_n=\mathbf{x})=\mu_n(\mathbf{x}).$ 
Our interest is devoted to the sequence of $\{\mu_n\}_{n\in \mathbb{N}}$} in the rest of this paper. 
%
\subsection{Spectrum}
Let the Fourier transform $\mathcal{F}: L^2(K^d\times D^{(o)})\to \ell^2(L\times D^{(o)})$ be 
\[ (\mathcal{F}g)(\mathbf{x},f)=\int_{0\;d}^{2\pi} g(\mathbf{k},f) e^{-i\langle \mathbf{k},\mathbf{x} \rangle} \frac{d\mathbf{k}}{(2\pi)^d}. \]
Here $K=[0,2\pi)$. 
The dual operator $\mathcal{F}^*: \ell^2(L\times D^{(o)})\to L^2(K^d\times D^{(o)})$ is described by   
\[ (\mathcal{F}^*\phi)(\mathbf{k},f)=\sum_{\mathbf{x}\in L}\phi(\mathbf{x},f)e^{i\langle \mathbf{k},\mathbf{x} \rangle}. \]
Taking $\mathcal{F}^*$ to both sides of Eq.~(\ref{receq}), we have 
\begin{align}\label{rec}	
	\widetilde{\Psi}_n(\mathbf{k},v) = \sum_{ f: t(f)=v } e^{ i\langle \mathbf{k},\hat{\theta}(f) \rangle }P_f \widetilde{\Psi}_{n-1}(\mathbf{k},o(f)). 
\end{align}
Here $\widetilde{\Psi}_{n}(\mathbf{k},v)=\mathcal{F}^*\Psi_{n}(\mathbf{x},v)$. 
Define $\widetilde{U}^{(o)}_{\mathbf{k}}$ as the twisted Szegedy walk $U^{(w,\theta)}$ on the quotient graph $G^{(o)}$ with 
\begin{equation}\label{crystal_case}
	w(e)=1/\sqrt{\mathrm{deg}(o(e))},\;\; \theta(e)=\langle \mathbf{k},\hat{\theta}(e) \rangle \;\;(e\in D^{(o)}). 
\end{equation}
We also denote the discriminant operator of $\widetilde{U}^{(o)}_{\mathbf{k}}$ by $\widetilde{T}^{(o)}_{\mathbf{k}}\equiv T^{(w,\theta)}$ 
with the above $w$ and $\theta$. 

The important statement of this section is that the Grover walk on $G$ is reduced to the twisted Szegedy walk 
on the quotient graph $G^{(o)}$ in the Fourier space as follows: 
\begin{equation}\label{FT}
	\widetilde{\Psi}_n(\mathbf{k},v)= \Pi_{\mathcal{H}_{v}} \left(\widetilde{U}^{(o)}_{\mathbf{k}}\right)^n \sum_{u\in V^{(o)}} \widetilde{\Psi}_0(\mathbf{k},u). 
\end{equation}
In other words, if $U_{Grover}$ is the time evolution of the Grover walk on \segawa{the abelian covering graph of $G^{(o)}$}, 
then we have for any $\widetilde{\Psi}_0\in L^2(K^d\times D^{(o)})$, 
\begin{equation}\label{FT2}
	\mathcal{F}[\{\widetilde{U}^{(o)}_{\mathbf{k}}\}^n\widetilde{\Psi}_0]= U_{Grover}^n \mathcal{F}[\widetilde{\Psi}_0]. 
\end{equation}

On the other hand, let $Y_n^{(u)}\in L\times V^{(o)}$ be a simple random walk at time $n$ on $G$ starting from $(0,u)$. 
Denote the characteristic function of $Y_n^{(u)}$, for $\mathbf{k}\in K^d$, by
\[ \chi_n(\mathbf{k};u,v)= \sum_{\mathbf{x}\in L} P\left(Y_n^{(u)}=(\mathbf{x},v)\right)e^{i\langle \mathbf{k}, \mathbf{x} \rangle}. \]
Let $|V^{(o)}|\times |V^{(o)}|$ matrix $\left(\tilde{P}_{\mathbf{k}}^{(o)}\right)_{u,v}$ for $u,v\in V^{(o)}$ be 
$\left(\tilde{P}_{\mathbf{k}}^{(o)}\right)_{u,v}=\chi_1(\mathbf{k};u,v)$. 
Then \[\chi_n(\mathbf{k};u,v)=\left(\left\{\tilde{P}_{\mathbf{k}}^{(o)}\right\}^n\right)_{u,v} . \]
$\tilde{T}^{(o)}_{\mathbf{k}}$ and $\tilde{P}^{(o)}_{\mathbf{k}}$ are related by 
\begin{equation}\label{tM}
\tilde{T}^{(o)}_{\mathbf{k}}= \mathfrak{D}^{-1}\tilde{P}^{(o)}_{\mathbf{k}}\mathfrak{D}, 
\end{equation}
where $\mathfrak{D}$ is given by Eq.~(\ref{stadist}) inserting the square root of the stationary distribution 
of \segawa{the} simple random walk on $G^{(o)}$; that is, \sato{$\sqrt{\bs{\pi}(u)}=\sqrt{\mathrm{deg}(u)}$} for all $u\in V^{(o)}$. 
\begin{proposition}\label{cristalspec}
Let $\tilde{P}_{\mathbf{k}}^{(o)}$ be defined by the above. 
When $\mathbf{k}\in \mathbb{R}^d$ satisfies $\langle \mathbf{k},\hat{\theta}(e)\rangle\notin \pi \mathbb{Z}$, for some $e\in D^{(o)}$, 
then we have 
\[ \mathrm{spec}(\widetilde{U}_{\mathbf{k}}^{(o)})= \varphi_{QW}^{-1}(\mathrm{spec}(\tilde{P}_{\mathbf{k}}^{(o)})) \cup \{ 1 \}^{|E^{(o)}|-|V^{(o)}|} \cup \{ -1 \}^{|E^{(o)}|-|V^{(o)}|}. \] 
\end{proposition}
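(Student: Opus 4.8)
The plan is to reduce the statement to Proposition~\ref{key} applied to the quotient graph $G^{(o)}$ equipped with the weight and $1$-form of Eq.~(\ref{crystal_case}), and then to show that under the hypothesis on $\mathbf{k}$ the discriminant $\widetilde{T}^{(o)}_{\mathbf{k}}$ has neither $+1$ nor $-1$ in its spectrum, i.e. $m_1=m_{-1}=0$. Granting this, the multiplicity formulas of Proposition~\ref{key} collapse to $M_{\pm 1}=\max\{0,|E^{(o)}|-|V^{(o)}|\}=|E^{(o)}|-|V^{(o)}|$, the last equality because the crystal lattice has Betti number $r=|E^{(o)}|-|V^{(o)}|+1\ge 1$, so $|E^{(o)}|-|V^{(o)}|\ge 0$. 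Finally the similarity relation Eq.~(\ref{tM}) gives $\mathrm{spec}(\widetilde{T}^{(o)}_{\mathbf{k}})=\mathrm{spec}(\widetilde{P}^{(o)}_{\mathbf{k}})$, and substituting this into the spectral map of Proposition~\ref{key} yields exactly the claimed identity.

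So the real content is to verify $m_1=m_{-1}=0$, and for this I would invoke the four-case classification of Lemma~\ref{multm}. First I would observe that for the Grover weight $w(e)=1/\sqrt{\mathrm{deg}(o(e))}>0$ the underlying walk $\{|w(e)|^2\}$ is the symmetric random walk, which is always reversible (with reversible measure $\mathrm{deg}$); hence case (iv) in the sense of non-reversibility is never forced and the multiplicities are governed entirely by the holonomy $\int_c\mathrm{arg}(\widetilde{w})$. Since $w$ is real positive, $\mathrm{arg}(\widetilde{w}(e))=\theta(e)/2$, and because $\theta(\bar e)=-\theta(e)$ one computes for any closed path $c=(e_1,\dots,e_n)$ that $\int_c\mathrm{arg}(\widetilde{w})=\sum_{j}\theta(e_j)=\langle\mathbf{k},\sum_j\hat{\theta}(e_j)\rangle$.

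Next I would use the hypothesis that $\langle\mathbf{k},\hat{\theta}(e)\rangle\notin\pi\mathbb{Z}$ for some $e\in D^{(o)}$. Since $\hat{\theta}(e)=0$ on every tree edge, this $e$ must be a non-tree edge $e_j$, and the associated fundamental cycle $c_j$ from Eq.~(\ref{fundamental_cycle}) contains $e_j$ as its only non-tree edge, all remaining edges lying in $\mathbb{T}^{(o)}$ and carrying $\theta=0$. Hence $\int_{c_j}\mathrm{arg}(\widetilde{w})=\langle\mathbf{k},\hat{\theta}_j\rangle\notin\pi\mathbb{Z}$. Because $\pi\mathbb{Z}=2\pi\mathbb{Z}\cup(2\pi\mathbb{Z}+\pi)$, this single cycle already falsifies each of cases (i), (ii) and (iii): cases (i) and (ii) require $\int_c\mathrm{arg}(\widetilde{w})\in 2\pi\mathbb{Z}$ for all closed paths, while case (iii) requires $\int_{c_j}\in 2\pi\mathbb{Z}$ if $c_j$ is even and $\int_{c_j}\in 2\pi\mathbb{Z}+\pi$ if $c_j$ is odd, and the value $\langle\mathbf{k},\hat{\theta}_j\rangle$ lies in neither coset. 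By Lemma~\ref{multm} we are therefore in case (iv), so $m_1=m_{-1}=0$, as needed.

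The hard part is conceptually this exclusion of $\pm 1$ from $\mathrm{spec}(\widetilde{T}^{(o)}_{\mathbf{k}})$; once it is in place the remainder is bookkeeping with the multiplicity formula and the similarity Eq.~(\ref{tM}). The one point demanding care is that the hypothesis constrains only a single arc rather than all of them, so I must argue that one non-tree arc with holonomy outside $\pi\mathbb{Z}$ already suffices. This is legitimate precisely because each of cases (i)--(iii) imposes its holonomy constraint on \emph{every} (fundamental) cycle, so violating it on one fundamental cycle rules them all out. I would also record that $\varphi_{QW}^{-1}(\mathrm{spec}(\widetilde{T}^{(o)}_{\mathbf{k}}))$ then contributes no further $\pm1$, so the three pieces of the displayed union are genuinely disjoint in their accounting of the eigenvalues $\pm1$.
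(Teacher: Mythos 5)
Your proposal is correct and takes essentially the same route as the paper: reduce via the similarity Eq.~(\ref{tM}) to $\mathrm{spec}(\widetilde{T}^{(o)}_{\mathbf{k}})=\mathrm{spec}(\widetilde{P}^{(o)}_{\mathbf{k}})$, apply the spectral map of Proposition~\ref{key}, and use Lemma~\ref{multm} to conclude $m_{\pm1}=0$ hence $M_{\pm1}=|E^{(o)}|-|V^{(o)}|$. The only difference is that the paper asserts in a single line that the hypothesis on $\mathbf{k}$ ``implies case (iv),'' whereas you correctly spell out the underlying holonomy computation $\int_{c_j}\mathrm{arg}(\widetilde{w})=\langle\mathbf{k},\hat{\theta}_j\rangle\notin\pi\mathbb{Z}$ on the fundamental cycle of the non-tree arc, which rules out cases (i)--(iii) simultaneously.
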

\begin{proof}
From Eq.~(\ref{tM}), we have $\mathrm{spec}(\tilde{T}^{(o)}_{\mathbf{k}})=\mathrm{spec}(\tilde{P}^{(o)}_{\mathbf{k}})$, 
\segawa{and the eigenfunctions $\rho$ and $\rho'$ with the same eigenvalue of $\tilde{T}^{(o)}_{\mathbf{k}}$ and $\tilde{P}^{(o)}_{\mathbf{k}}$, respectively, satisfy $\rho'=\mathfrak{D}\rho$. }
Therefore we can directly apply Lemma~\ref{key}.  
On the other hand, the assumption $\langle \mathbf{k},\hat{\theta}(e)\rangle\notin \pi \mathbb{Z}$, for some $e\in D^{(o)}$ implies case (iv). 
From Lemma~\ref{multm}, $m_{\pm 1}=0$; that is, the multiplicities of $\pm 1$ are $M_{\pm 1}=|E^{(o)}|-|V^{(o)}|$, which concludes the proof. 
\end{proof}
Let the eigenspace of eigenvalues $\varphi_{QW}^{-1}(\mathrm{spec}(\widetilde{T}^{(o)}_{\mathbf{k}}))$ 
be $\widetilde{\mathcal{L}}^{(o)}_{\mathbf{k}}\subset L^2(K^d\times D^{(o)})$. 
We denote $\widetilde{S}^{(o)}_{\mathbf{k}}:  L^2(K^d\times D^{(o)})\to  L^2(K^d\times D^{(o)})$ by 
$\widetilde{S}^{(o)}_{\mathbf{k}}f(\mathbf{k},e)=e^{-i\langle \mathbf{k},\hat{\theta}(e) \rangle}f(\mathbf{k},\bar{e})$. 
Remark that $\widetilde{S}^{(o)}_{\mathbf{k}}$ has two eigenvalues $\pm 1$. 
We denote the eigenspaces of eigenvalues $\pm 1$ for $\widetilde{S}^{(o)}_{\mathbf{k}}$ by $\widetilde{\mathcal{H}}_{\pm}^{(o)}$ and 
define $\tilde{\mathcal{M}}_\pm^{(o)} = \widetilde{\mathcal{L}_\mathbf{k}}^{(o)} \cap \widetilde{\mathcal{H}}_{\mp}^{(o)}$. 
\sato{
\begin{definition}
For a given crystal lattice $G$, we define $\mathcal{C}_\pm\subset \ell^2(L\times D^{(o)})$ by 
	\begin{align*} 
        \mathcal{C}_+ &= \sum_{c\in C(G)} \mathbb{C}\gamma(c), \\
        \mathcal{C}_- &= \sum_{c\in CP_e(G)} \mathbb{C}\tau(c).
        \end{align*}
\end{definition}
We remark that $\mathcal{M}_\pm$ expressed by Eqs.~(\ref{hassei1}) and (\ref{hassei2}) are defined for a finite graph. }
\begin{proposition}\label{fouriecycle}
\HS{For} a crystal graph $G$, 
it holds that
	\[\sum_{\mathbf{x}\in L}\mathbb{C}\mathcal{F}(e^{i\langle \mathbf{k}, \mathbf{x}\rangle}\tilde{\mathcal{M}}_{\pm}^{(o)})=\mathcal{C}_{\pm}.\] 
\end{proposition}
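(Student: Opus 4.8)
The plan is to read both sides of the identity as two descriptions of a single object, namely the $\pm1$-eigenspace of the Grover walk $U_{Grover}$ on the infinite crystal lattice $G$: the right-hand side is its ``physical'' (cycle) description and the left-hand side its ``spectral'' (Floquet) description. The bridge is the intertwining relation Eq.~(\ref{FT2}), $\mathcal{F}\{\widetilde{U}^{(o)}_{\mathbf{k}}\}^{n}=U_{Grover}^{n}\mathcal{F}$, which shows that $\mathcal{F}$ sends fiberwise eigenvectors of $\widetilde{U}^{(o)}_{\mathbf{k}}$ to eigenvectors of $U_{Grover}$ with the same eigenvalue. Since $\mathcal{F}(e^{i\langle\mathbf{k},\mathbf{x}\rangle}g)$ is exactly the $L$-translate by $\mathbf{x}$ of $\mathcal{F}(g)$, the left-hand side is the $\mathbb{C}[L]$-module (the smallest $L$-translation invariant subspace) generated by $\mathcal{F}(\tilde{\mathcal{M}}^{(o)}_{\pm})$, every element of which is a $\pm1$-eigenvector of $U_{Grover}$; this already records that the left-hand side is $L$-translation invariant, as is $\mathcal{C}_{\pm}$.

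First I would check the inclusion $\mathcal{C}_{\pm}\subseteq(\text{left-hand side})$. For a cycle $c\in C(G)$ the vector $\bs{\gamma}(c)$ is compactly supported, satisfies $\bs{\gamma}(c)(e)=-\bs{\gamma}(c)(\bar e)$ (so it lies in $\mathcal{H}^{(S)}_{-}$, as $\theta\equiv 0$ on $G$) and lies in $\ker d_{A}\cap\ker d_{B}$ (the closed-path condition); the computation of Proposition~\ref{key} then gives $U_{Grover}\bs{\gamma}(c)=\bs{\gamma}(c)$, and the same argument applied to $\bs{\tau}(c)$ for $c\in CP_{e}(G)$ gives $U_{Grover}\bs{\tau}(c)=-\bs{\tau}(c)$, exactly as in Theorem~\ref{cycles}. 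Applying $\mathcal{F}^{*}$ turns $\bs{\gamma}(c)$ into a trigonometric-polynomial section of the fiber bundle whose value at $\mathbf{k}$ is a $+1$-eigenvector of $\widetilde{U}^{(o)}_{\mathbf{k}}$, i.e.\ lies in $\tilde{\mathcal{M}}^{(o)}_{+}(\mathbf{k})$ for a.e.\ $\mathbf{k}$; expanding $\bs{\gamma}(c)=\mathcal{F}(\mathcal{F}^{*}\bs{\gamma}(c))$ as the finite sum $\sum_{\mathbf{x}}\mathcal{F}(e^{i\langle\mathbf{k},\mathbf{x}\rangle}g_{\mathbf{x}})$ with each $g_{\mathbf{x}}\in\tilde{\mathcal{M}}^{(o)}_{+}$ places $\bs{\gamma}(c)$ in the left-hand side, and similarly for $\bs{\tau}(c)$.

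For the reverse inclusion I would descend to the fibers. Under $\mathcal{F}^{*}$ an $L$-invariant subspace becomes a measurable field of fiber subspaces of $\mathbb{C}^{D^{(o)}}$; the left-hand side corresponds to $\mathbf{k}\mapsto\tilde{\mathcal{M}}^{(o)}_{\pm}(\mathbf{k})$, while $\mathcal{C}_{\pm}$ corresponds to a field $\mathbf{k}\mapsto W_{\pm}(\mathbf{k})$ with $W_{\pm}(\mathbf{k})\subseteq\tilde{\mathcal{M}}^{(o)}_{\pm}(\mathbf{k})$ by the previous step. It then suffices to match dimensions for generic $\mathbf{k}$. By Proposition~\ref{cristalspec} together with Lemma~\ref{multm}, whenever $\langle\mathbf{k},\hat{\theta}(e)\rangle\notin\pi\mathbb{Z}$ for some $e$ one has $\dim\tilde{\mathcal{M}}^{(o)}_{\pm}(\mathbf{k})=|E^{(o)}|-|V^{(o)}|$. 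I would then exhibit $|E^{(o)}|-|V^{(o)}|$ cycles of $G$ whose $\bs{\gamma}$-images are fiberwise linearly independent: these are the zero-$\hat{\theta}$-winding closed paths of $G^{(o)}$, which are exactly the closed paths lifting to genuine cycles of the covering graph $G$, and they realize all but one of the $r=|E^{(o)}|-|V^{(o)}|+1$ fundamental cycles of $G^{(o)}$, the missing global-homology direction being the one killed by the generic Bloch twist (precisely the discrepancy between the finite count in Theorem~\ref{cycles} and the fiber count in Proposition~\ref{cristalspec}). A parallel bookkeeping with $\bs{\tau}$ and $CP_{e}(G)$, absorbing the $\bs{1}_{\{G\text{ is bipartite}\}}$ correction of Theorem~\ref{cycles}, handles $\mathcal{C}_{-}$. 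Equality of the generic fibers, together with $W_{\pm}\subseteq\tilde{\mathcal{M}}^{(o)}_{\pm}$ as modules of trigonometric-polynomial sections, forces equality of the two subspaces.

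The main obstacle is precisely this fiberwise independence/dimension step: one must confirm that the compactly supported cycles of $G$, equivalently the zero-winding closed paths of the quotient, generate the whole $(|E^{(o)}|-|V^{(o)}|)$-dimensional fiber $\tilde{\mathcal{M}}^{(o)}_{\pm}(\mathbf{k})$ for generic $\mathbf{k}$ rather than a proper subspace, while correctly tracking how the global ``$+1$'' cycle of the finite quotient degenerates under the twist and how the even-length/bipartite distinction transfers from Theorem~\ref{cycles} to $\mathcal{C}_{-}$. Controlling these generic-versus-special $\mathbf{k}$ degenerations, and ruling out that the two finitely generated modules share a generic fiber yet differ at exceptional $\mathbf{k}$, is where the genuine work lies.
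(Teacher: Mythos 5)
Your architecture coincides with the paper's: the easy inclusion $\mathcal{C}_{\pm}\subseteq\sum_{\mathbf{x}\in L}\mathbb{C}\mathcal{F}(e^{i\langle \mathbf{k},\mathbf{x}\rangle}\tilde{\mathcal{M}}_{\pm}^{(o)})$ by transporting cycle vectors through $\mathcal{F}^*$, then the reverse inclusion by matching against the generic fiber dimension $\dim\tilde{\mathcal{M}}_{\pm}^{(o)}=|E^{(o)}|-|V^{(o)}|$ supplied by Proposition~\ref{cristalspec} and Lemma~\ref{multm} (case (iv), a.e.\ $\mathbf{k}$). Your first inclusion is sound, and in fact simpler than you make it: $\mathcal{F}^*\bs{\gamma}(c)$ is already a single element of $\tilde{\mathcal{M}}_{+}^{(o)}$, so no expansion into translates is needed; the paper verifies membership concretely via the relations $(\mathcal{F}^*\bs{\gamma}(c))(\mathbf{k},\bar{e}_j)=-e^{i\theta(e_j)}(\mathcal{F}^*\bs{\gamma}(c))(\mathbf{k},e_j)$ together with $\int_c\theta=0$, which holds precisely because $c$ closes up in $G$ --- your ``zero winding'' condition. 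Your residual worry about exceptional $\mathbf{k}$ is also not a real obstruction: the subspaces live in $L^2(K^d\times D^{(o)})$ and are insensitive to null sets, and a.e.\ spanning is all the paper ever uses.

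The genuine gap is in the reverse inclusion, exactly where you announce that ``the genuine work lies,'' and moreover the object you propose to use there does not exist as described. You want $|E^{(o)}|-|V^{(o)}|$ cycles of $G$ realizing ``all but one of the fundamental cycles'' of $G^{(o)}$; but for generic $\mathbf{k}$ a single fundamental cycle $c_j$ with $\theta_j\neq 0$ lifts to a \emph{non-closed} path in $G$, so no individual fundamental cycle contributes a fiberwise vector --- zero-winding classes are necessarily mixtures of at least two fundamental cycles (for the maximal abelian cover they are commutator-type paths, homologically trivial in $G^{(o)}$). The spanning family must therefore be built, and this construction is the entire content of the paper's proof: join $c_1$ to each $c_j$ ($2\le j\le r$) by a bridge path, place explicit functions $\eta_j,\zeta_j$ on the resulting dumbbell closed path $\tilde{c}_j$, symmetrize by $Q_{\pm}=I\pm S^{(\theta)}$ to land in $\widetilde{\mathcal{H}}_{\pm}^{(o)}$, and compute that the divergence $\sum_{e:o(e)=u}Q_{+}\eta_j(e)$ vanishes except at the base point, where it equals $1-(-1)^{n_j}e^{i\theta_j}$ (resp.\ $1-e^{i\theta_j}$ for $Q_{-}\zeta_j$). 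The cross-weighted combinations $\tilde{\eta}_j=\left(1-(-1)^{n_1}e^{i\theta_1}\right)Q_{+}\eta_j-\left(1-(-1)^{n_j}e^{i\theta_j}\right)Q_{+}\eta_1$ and the analogous $\tilde{\zeta}_j$ cancel the defect, so by Eq.~(\ref{beta}) they lie in $\tilde{\mathcal{M}}_{\mp}^{(o)}$; they are manifestly linearly independent ($r-1=|E^{(o)}|-|V^{(o)}|$ of them, with distinct distinguished edges), hence span the fiber a.e.\ by the case-(iv) count. Finally --- a step your sketch omits but which is indispensable for landing inside $\mathcal{C}_{\pm}$ --- one must identify $\mathcal{F}\tilde{\zeta}_j=\bs{\gamma}(p)$ and $\mathcal{F}\tilde{\eta}_j=\bs{\tau}(p)$ for an explicit \emph{finite, even-length} closed path $p$ in $G$ (even-length is automatic because each arc of the dumbbell is traversed twice, and it is exactly what $\mathcal{C}_{-}$ requires, since Theorem~\ref{cycles} only puts $\bs{\tau}(p)$ in the $-1$-eigenspace for $p\in CP_e$; the weights $1-e^{i\theta_j}$ are precisely the coefficients produced by combining the two $L$-translates of the lifted dumbbell). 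Without this construction and identification your dimension count has nothing to count, so the proposal, while strategically correct, is incomplete at its decisive step.
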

\begin{proof}
First, we prove that 
$\mathcal{C}_{\pm}\subset \sum_{\mathbf{x}\in L}\mathbb{C}\mathcal{F}(e^{i\langle \mathbf{k}, \mathbf{x}\rangle}\tilde{\mathcal{M}}_{\pm}^{(o)})$. 
We remark that \segawa{a} closed cycle in $G$, $c=(f_1,f_2,\dots, f_\kappa)$, where $f_j=(x_j,e_j)$ ($x_j\in L$, $e_j\in D^{(o)}$), 
is represented by \segawa{the} closed path $(e_1,\dots,e_\kappa)$ in $G^{(o)}$. 
Note that 
\[ \bar{f}_j=(x_j+\hat{\theta}(e_j),\bar{e}_j)\;\mathrm{and}\;{f}_{j+1}=(x_j+\hat{\theta}(e_j),{e}_{j+1}). \]
It holds that 
\begin{align}
	(\mathcal{F}^*\bs{\gamma}(c))(\mathbf{k},\bar{e}_j) &= -e^{i\theta(e_j)}(\mathcal{F}^*\bs{\gamma}(c))(\mathbf{k},{e}_j), \;\;(1\leq j\leq \kappa)\label{w1}\\
	(\mathcal{F}^*\bs{\gamma}(c))(\mathbf{k},e_{j+1}) &= e^{i\theta(e_j)}(\mathcal{F}^*\bs{\gamma}(c))(\mathbf{k},{e}_j). \;\;(1\leq j<\kappa) \label{w2}
\end{align}
Here we define $\theta(e_j)=\langle \mathbf{k},\hat{\theta}(e_j) \rangle$. (See Eq.~(\ref{crystal_case}).)  
From Eqs.~(\ref{w1}) and (\ref{w2}), we have 
\[ (\mathcal{F}^*\bs{\gamma}(c))(\mathbf{k},\bar{e}_{\kappa}) = -\exp\left[ i\int_{(e_1,\dots,e_\kappa)} \theta\right] (\mathcal{F}^*\bs{\gamma}(c))(\mathbf{k},{e}_1). \]
Since $c$ is \segawa{a} closed cycle of $G$, we have $\int_{(e_1,\dots,e_\kappa)} \theta=0$, and therefore 
\begin{equation}\label{w3}
	\sum_{f:o(f)=o(e_j)}(\mathcal{F}^*\bs{\gamma}(c))(f)=0, \;\;(1\leq j\leq \kappa).  
\end{equation}
Equation (\ref{w1}) implies that 
\begin{equation}\label{w4}
\mathcal{F}^*\bs{\gamma}(c)\in \sato{\widetilde{\mathcal{H}}_-^{(o)}}. 
\end{equation}
Combining Eq.~(\ref{w3}) with Eq.~(\ref{w4}), 
we have $\mathcal{F}^*\bs{\gamma}(c)\in \tilde{\mathcal{M}}^{(o)}_+$ from Eq.~(\ref{beta}). 
Similarly, for any even-length closed path $c$ in $G$, 
we have $\mathcal{F}^*\bs{\tau}(c)\in \tilde{\mathcal{M}}^{(o)}_-$. 
Thus we find that 
$\mathcal{C}_{\pm}\subset \sum_{\mathbf{x}\in L}\mathbb{C}\mathcal{F}(e^{i\langle \mathbf{k}, \mathbf{x}\rangle}\tilde{\mathcal{M}}_{\pm}^{(o)})$. 

We now prove that 
$\mathcal{C}_{\pm}\supset \sum_{\mathbf{x}\in L}\mathbb{C}\mathcal{F}(e^{i\langle \mathbf{k}, \mathbf{x}\rangle}\tilde{\mathcal{M}}_{\pm}^{(o)})$. 
The set of cycles $C^{(o)}=\{c_1,\dots,c_r\}$ with $r=|E^{(o)}|-|V^{(o)}|+1$ \HS{can be identified with} $E(G^{(o)}\smallsetminus \mathbb{T}^{o})$. 
\segawa{Here we assume that each $c_j=(e_1^{(j)},\dots,e_{n_j}^{(j)})$ $(j\in\{1,\dots,r\})$ satisfies 
	\[ \theta(e_i^{(j)})= \begin{cases} \theta_j & \text{: $i=1$,} \\ 0 & \text{: otherwise.} \end{cases}\]
}
\segawa{For the pair of $c_1$ and $c_j$, a path $P=(f_1,\dots,f_l)$ satisfying $o(P)=o(c_1)$ and $t(P)=o(c_j)$ exists. } 
Define new closed paths $\tilde{c_1}$ and $\tilde{c_j}$ by
\begin{align*}
\tilde{c_1}&=(\bar{f}_{\kappa},\bar{f}_{\kappa-1},\dots,\bar{f}_{1},e_1^{(1)},\dots,e_{n_1}^{(1)},f_{1},\dots,f_{\kappa-1},f_{\kappa}), \\
\tilde{c_j}&=(f_{\kappa+1},f_{\kappa+2},\dots,f_{l},e_1^{(j)},\dots,e_{n_j}^{(j)},\bar{f}_{l},\bar{f}_{l-1},\dots,\bar{f}_{\kappa+1}),\;\;(1\leq\kappa\leq l)
\end{align*}
respectively.
Denote $\eta_j, \zeta_j\in \ell^2(D^{(o)})$ by 
\begin{align*}
	\eta_1(e) 
        & = \begin{cases} 
                        (-1)^{\kappa-m} & \text{; $e=\bar{f}_m$, $1\leq m\leq \kappa$} \\
                        (-1)^{\kappa} & \text{; $e=e^{(1)}_1$} \\
                        (-1)^{\kappa+m-1}e^{i\theta_1} & \text{; $e=e^{(1)}_m$, $2\leq m\leq n_1$} \\
        	        (-1)^{\kappa+n_1-1+m} e^{i\theta_1} & \text{; $e=f_m$, $1\leq m\leq \kappa$} \\
                         0 & \text{; otherwise}
           \end{cases}, \\
        \zeta_1(e) 
        &= \begin{cases}  1 & \text{; $e=\bar{f}_m$, $1\leq m\leq \kappa$ or $e=e^{(1)}_1$} \\
                         e^{i\theta_1} & \text{; $e={e^{(j)}_m}$, $1\leq m\leq n_1$ or $e=f_s$, $1\leq s\leq \kappa$} \\
                         0 & \text{; otherwise}
           \end{cases}
\end{align*}
For $2\leq j\leq r$, 
\begin{align*}
	\eta_j(e) 
        &= \begin{cases} 
                        (-1)^{m-\kappa-1} & \text{; $e={f}_m$, $\kappa+1\leq m\leq l$} \\
                        (-1)^{l-\kappa} & \text{; $e={e^{(j)}_1}$} \\
                        (-1)^{l-\kappa+m-1}e^{i\theta_j} & \text{; $e=e^{(j)}_m$, $2\leq m\leq n_j$} \\
        	        (-1)^{l-\kappa+n_j+(l-m)} e^{i\theta_j} & \text{; $e=\bar{f}_m$, $\kappa+1\leq m\leq l$} \\
                         0 & \text{; otherwise}
           \end{cases}, \\
        \zeta_j(e) 
        &= \begin{cases}  1 & \text{; $e=f_m$, $\kappa+1 \leq m\leq l$ or $e=e^{(j)}_1$} \\
                         e^{i\theta_j} & \text{; $e={e^{(j)}_m}$, $1\leq m\leq n_j$ or $e=\bar{f}_s$, $\kappa+1\leq s\leq l$} \\
                         0 & \text{; otherwise}
           \end{cases}
\end{align*}
We can notice that taking $Q_{\pm}\equiv I\pm S^{(\theta)}$, for $u\in V(\widetilde{c}_j)$, 
\begin{align} 
\sum_{e: o(e)=u}Q_+\eta_j(e)
	&= \begin{cases}
        1-(-1)^{n_j} e^{i\theta_j} & \text{; $u=o(\widetilde{c}_j)$} \\
        0 & \text{; otherwise} 
        \end{cases} \\
\sum_{e: o(e)=u}Q_-\zeta_j(e)
	&= \begin{cases}
        1-e^{i\theta_j} & \text{; $u=o(\widetilde{c}_j)$} \\
        0 & \text{; otherwise} 
        \end{cases}         
\end{align}
\HS{Moreover it holds that $Q_+\eta \in \tilde{\mathcal{H}}_+^{(o)}$ and $Q_-\eta \in \tilde{\mathcal{H}}_-^{(o)}$ since $S^{(\theta)}Q_\pm=\pm Q_\pm$.} 
From these observations, \segawa{we replace $\eta_j$ and $\zeta_j$ 
with $\tilde{\eta}_j$ and $\tilde{\zeta}_j$ so that $\tilde{\eta}_j,\tilde{\zeta}_j\in \mathrm{ker}(d^*_A)$: }
\begin{align} 
\tilde{\eta}_j &= \left(1-(-1)^{n_1}e^{i\theta_1}\right)Q_+\eta_j-\left(1-(-1)^{n_j}e^{i\theta_j}\right)Q_+\eta_1, \\
\tilde{\zeta}_j &= \left(1-e^{i\theta_1}\right)Q_-\zeta_j-\left(1-e^{i\theta_j}\right)Q_-\zeta_1. 
\end{align}
Thus Eq.~(\ref{beta}) implies $\tilde{\eta}_j\in \tilde{\mathcal{M}}_-^{(o)}$, $\tilde{\zeta}_j\in \tilde{\mathcal{M}}_+^{(o)}$, respectively. 
Since the functions $\{\tilde{\eta}_j\}_j^{r}$ and $\{\tilde{\zeta}_j\}_j^{r}$ are linearly independent and 
the situation is the case (iv) in Eq.~(\ref{multm}) for almost all $\mathbf{k}\in \mathbb{R}^d$, it holds that 
$\tilde{\mathcal{M}}_-^{(o)}=\mathrm{span}\{\tilde{\eta}_j;j\in \{1,\dots,r\}\}$ and 
$\tilde{\mathcal{M}}_+^{(o)}=\mathrm{span}\{\tilde{\zeta}_j;j\in \{1,\dots,r\}\}$ a.e. 
We can confirm that for the pair of closed paths $\tilde{c}_1$ and $\tilde{c}_j$ in $G^{(o)}$, 
there exists a \segawa{finite and even-length closed path} $p$ in $G$ such that 
$\mathcal{F}\tilde{\zeta}_j=\gamma(p)$ and $\mathcal{F}\tilde{\eta}_j=\tau(p)$. 
From Theorem~\ref{cycles}, 
we have $\sum_{\mathbf{x}\in L}\mathbb{C}\mathcal{F}(e^{i\langle \mathbf{k}, \mathbf{x}\rangle}\tilde{\mathcal{M}}_{\pm}^{(o)})\subset \mathcal{C}_\pm$. 
\end{proof}
Let \sato{$P_G: V\times V\to [0,1]$} be the stochastic operator of a simple random walk on \segawa{a} crystal lattice $G$; that is, 
\[ P_G(u,v)=\boldsymbol{1}_{(u,v)\in D(G)}/\mathrm{deg}(u). \]
Let $U(G)$ be the time evolution of the Grover walk on crystal lattice $G$. As a consequence of Propositions \ref{cristalspec} and \ref{fouriecycle}, 
we have the following corollary.
\begin{corollary}
\HS{For a crystal lattice $G$, we put} 
\[N=\begin{cases} \infty & \text{; $G$ has cycles,} \\ 0 & \text{; otherwise.} \end{cases}\]
Then we have 
\begin{equation}
\mathrm{spec}(U(G))=\varphi^{-1}_{QW}(\HS{P_G})\cup \{1\}^{N} \cup \{-1\}^{N}
\end{equation}
\end{corollary}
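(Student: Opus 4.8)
The plan is to combine the two main structural results already established---the fibrewise spectral map of Proposition~\ref{cristalspec} and the cycle characterization of the eigenspaces in Proposition~\ref{fouriecycle}---and then integrate over the Fourier parameter $\mathbf{k}\in K^d$. The corollary asserts that $\mathrm{spec}(U(G))$ on the covering graph $G$ is the direct integral of the fibrewise spectra $\mathrm{spec}(\widetilde{U}^{(o)}_{\mathbf{k}})$. By Eq.~(\ref{FT2}) the Fourier transform $\mathcal{F}$ conjugates $U_{Grover}=U(G)$ to the family $\{\widetilde{U}^{(o)}_{\mathbf{k}}\}_{\mathbf{k}\in K^d}$ acting on $L^2(K^d\times D^{(o)})$, so $\mathrm{spec}(U(G))=\overline{\bigcup_{\mathbf{k}\in K^d}\mathrm{spec}(\widetilde{U}^{(o)}_{\mathbf{k}})}$.

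First I would treat the ``inherited part'' coming from the random walk. By Proposition~\ref{cristalspec}, for almost every $\mathbf{k}$ (those with $\langle\mathbf{k},\hat\theta(e)\rangle\notin\pi\mathbb{Z}$ for some $e$) the fibrewise continuous spectrum is $\varphi_{QW}^{-1}(\mathrm{spec}(\widetilde{P}^{(o)}_{\mathbf{k}}))$. Taking the union over $\mathbf{k}$ and using that $\widetilde{P}^{(o)}_{\mathbf{k}}$ is the Fourier-transformed transition operator of the simple random walk on $G$, the direct integral $\bigcup_{\mathbf{k}}\mathrm{spec}(\widetilde{P}^{(o)}_{\mathbf{k}})$ recovers $\mathrm{spec}(P_G)$, the spectrum of the stochastic operator on the whole crystal lattice. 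Applying $\varphi_{QW}^{-1}$ componentwise then yields the inherited piece $\varphi_{QW}^{-1}(\mathrm{spec}(P_G))$ of $\mathrm{spec}(U(G))$.

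Next I would handle the flat bands at $\pm 1$. In each fibre Proposition~\ref{cristalspec} contributes eigenvalues $\{1\}$ and $\{-1\}$ each with finite multiplicity $|E^{(o)}|-|V^{(o)}|$, and by Proposition~\ref{fouriecycle} the corresponding eigenspaces $\widetilde{\mathcal{M}}^{(o)}_\pm$ Fourier-transform onto the cycle spaces $\mathcal{C}_\pm\subset\ell^2(L\times D^{(o)})$ on the covering graph. The point is that whenever $G$ has at least one cycle, the inverse Fourier transform of a nonzero fibrewise eigenvector spreads over the infinitely many translates in $L$, so the eigenspaces $\mathcal{C}_\pm$ are infinite-dimensional; this is why the multiplicity $N$ becomes $\infty$ rather than the finite fibrewise count. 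Conversely if $G^{(o)}$ is a tree then $|E^{(o)}|-|V^{(o)}|<0$ forces the flat-band multiplicity to be zero and $G$ has no cycles, giving $N=0$.

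The main obstacle I anticipate is making the passage from the fibrewise statements to the direct-integral statement rigorous, in particular justifying that the measure-zero set of ``bad'' $\mathbf{k}$ (where $m_{\pm 1}>0$ and the flat bands merge with the random-walk spectrum) does not alter the union of spectra, and that the localized eigenvectors $\bs\gamma(c),\bs\tau(c)$ indexed by cycles $c\in C(G)$ genuinely furnish \emph{honest} $\ell^2$ eigenvectors of $U(G)$ with eigenvalue $\pm 1$ (so that $\pm 1$ lies in the point spectrum with infinite multiplicity). The first concern is resolved by noting that $\varphi_{QW}^{-1}$ is continuous and the exceptional set has measure zero, so it contributes nothing new to the closure; the second follows because each $\bs\gamma(c)$ and $\bs\tau(c)$ has finite support and is therefore genuinely square-summable on $G$, and Theorem~\ref{cycles} already verified these are eigenvectors with the stated eigenvalues. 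Assembling the inherited part with the two flat bands then gives exactly the claimed decomposition of $\mathrm{spec}(U(G))$.
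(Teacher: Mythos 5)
Your overall route is the same as the paper's: the paper gives no written proof of this corollary, presenting it as an immediate consequence of Propositions~\ref{cristalspec} and~\ref{fouriecycle}, and your argument---Fourier decomposition of $U(G)$ into the fibres $\widetilde{U}^{(o)}_{\mathbf{k}}$ via Eq.~(\ref{FT2}), recovery of the inherited band $\varphi^{-1}_{QW}(\mathrm{spec}(P_G))$ from $\bigcup_{\mathbf{k}}\mathrm{spec}(\tilde{P}^{(o)}_{\mathbf{k}})$ using Eq.~(\ref{tM}), infinite multiplicity of $\pm 1$ from the finitely supported cycle vectors together with their lattice translates, and the observation that the measure-zero set of exceptional $\mathbf{k}$ does not change the closure of the union---is exactly the fleshing-out the paper intends, including the correct worry that $\bs{\gamma}(c)$, $\bs{\tau}(c)$ must be honest $\ell^2$ eigenvectors.

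One branch of your argument is off, however. For the $N=0$ case you write that ``if $G^{(o)}$ is a tree then $|E^{(o)}|-|V^{(o)}|<0$ forces the flat-band multiplicity to be zero and $G$ has no cycles.'' But for a genuine infinite crystal lattice $G^{(o)}$ is never a tree: a tree has trivial homology, so it admits no nontrivial abelian cover at all. The acyclic crystal lattices, e.g.\ $G=\mathbb{Z}$ over the one-loop quotient, occur precisely when the Betti number $|E^{(o)}|-|V^{(o)}|+1$ equals $1$, i.e.\ $|E^{(o)}|-|V^{(o)}|=0$, and it is this equality (not a strict negativity) that kills the fibrewise flat bands $\{\pm 1\}^{|E^{(o)}|-|V^{(o)}|}$ in Proposition~\ref{cristalspec}. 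So what your proof actually needs, and your tree dichotomy does not supply, is the equivalence ``$G$ has a cycle $\Leftrightarrow |E^{(o)}|-|V^{(o)}|\geq 1$'': a cycle of $G$ projects to a null-homologous closed path of $G^{(o)}$, which is impossible when $\pi_1(G^{(o)})$ is free of rank $\leq 1$ (null-homologous then means null-homotopic, hence reducible by backtracking, contradicting that a lifted cycle is non-backtracking); conversely, when the rank is $\geq 2$, the commutator of two fundamental cycles is null-homologous, hence lifts to a non-backtracking closed path in $G$ containing a cycle. (For the $-1$ band note also that doubling an odd cycle makes $\bs{\tau}$ vanish by sign cancellation; instead use an even cycle, or bridge two disjoint translates of an odd cycle as in the $C_{o-o}$ construction of Theorem~\ref{cycles}.) With these substitutions your proof is complete and coincides with the paper's intended one.
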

\subsection{Stochastic behaviors}
Here we define two specific properties of quantum walks; localization and linear spreading: 
\segawa{
\begin{definition}
\noindent 
\begin{enumerate}
\item We say that localization occurs if there exists $\mathbf{x}\in L$ such that 
\[ \limsup_{n\to\infty}\mu_n (\mathbf{x})>0. \]
\item We say that linear spreading occurs if 
\[ \lim_{n\to\infty} \sum_{\mathbf{x}\in L}||\mathbf{x}/n||^2 \mu_n (\mathbf{x}) \in (0,\infty) \]
\end{enumerate}
\end{definition}
}
We also define 
\[ \Phi^{(o)}=\{\phi: \mathbb{R}^d\to \mathbb{R}^d: \cos\phi(\mathbf{k})\in \sato{\mathrm{spec}(\tilde{P}^{(o)}_{\mathbf{k}})}\}. \]
Let \segawa{the} eigenfunction of \segawa{an} eigenvalue $\cos\phi$ with $\phi\in \Phi^{(o)}$ 
be $\omega_\phi\in L^2(K^d\times V^{(o)})$ with \segawa{$\sum_{u\in V^{(o)}}|\omega_\phi(\mathbf{k},u)|^2=1$}. 
We put the set of \segawa{the} arccos's of the constant eigenvalues 
\begin{align*} 
\Lambda &= \{\phi\in \Phi^{(o)}: \partial\phi(\mathbf{k})/\partial k_1=\partial\phi(\mathbf{k})/\partial k_2=\cdots=\partial\phi(\mathbf{k})/\partial k_d=0,\mathrm{\;for\;all}\;\mathbf{k}\in K^d \} \\
 &=\{\pm \alpha_1,\dots,\pm \alpha_s\},\;(0\leq s\leq |V^{(o)}|). 
\end{align*}
Here $s=0$ indicates $\Lambda=\emptyset$. 
Let $\mathcal{S}_\phi$ be the set of all critical points of $\phi\in \Phi^{(o)}\smallsetminus \Lambda$ on $K^d$; 
that is, $\partial \phi|_{\mathcal{S}_\phi}=0$. 
\segawa{To demonstrate the above properties, we adopt the stationary phase method. }
We impose the following natural assumption on $\mathcal{S}_\phi$ :

\begin{assumption}\label{assumption}
For every $\phi\in \Phi^{(o)}\smallsetminus \Lambda$, 
\begin{enumerate}
\renewcommand{\labelenumi}{(\alph{enumi})}
\item both $\phi(\mathbf{k})$ and the eigenfunction $\bs{w}_\phi(\mathbf{k},e)$ of eigenvalue $e^{i\phi(\mathbf{k})}$ are analytic on a neighbor of $\mathcal{S}_\phi$; 
\item each critical point $p\in \mathcal{S}_\phi$ is 
non-degenerate; that is, its Hessian matrix is invertible at $p$. 
Here the Hessian matrix of $\phi$ at $p$ is defined by 
\[ \left(\mathrm{Hess}_\phi(p)\right)_{l,m}=\frac{\partial}{\partial k_l \partial k_m}\phi(\mathbf{k})\bigg|_{\mathbf{k}=p}. \]
\end{enumerate}
\end{assumption}
\segawa{The stationary phase satisfies the following lemma (see for example \cite{PW,Ste}). }
\begin{lemma}\label{stationary}
Let $\psi$ and $\phi$ with $\mathrm{Re}(\varphi)\geq 0$ 
be complex-valued analytic functions on a compact neighborhood $\mathcal{N}$ of the origin in $\mathbb{R}^d$. 
Suppose that $\phi$ has a single critical point that is non-degenerate at $\mathbf{k}_0$. 
Then for sufficiently large $\lambda$ we have, 
\[ \int_{\mathcal{N}} e^{\lambda\phi(\mathbf{k})}\psi(\mathbf{k}) d\mathbf{k} 
	\sim \left(\frac{2\pi}{\lambda}\right)^{d/2} \frac{\psi(\mathbf{k}_0)e^{i\lambda\phi(\mathbf{k}_0)}}{\sqrt{\mathrm{Hess}_\phi(\mathbf{k}_0)}} \]
\segawa{Here the signature of the square root is decided by subtracting the number of the negative eigenvalues of $\mathrm{Hess}_\varphi(\mathbf{k})$ from the number of the positive eigenvalues. }
If $\phi$ has no critical points on the domain, then 
$\int_{\mathcal{N}} e^{\lambda\phi(\mathbf{k})}\psi(\mathbf{k}) d\mathbf{k}=O(\lambda^{-N})$ for any $N>0$. 
\end{lemma}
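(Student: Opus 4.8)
The plan is to separate the two regimes of the statement and glue them with a smooth cutoff. I would first fix $\chi\in C_c^\infty(\mathcal{N})$ equal to $1$ on a small ball around $\mathbf{k}_0$ and split
\[ \int_{\mathcal{N}} e^{i\lambda\phi(\mathbf{k})}\psi(\mathbf{k})\,d\mathbf{k} = \int_{\mathcal{N}} \chi\, e^{i\lambda\phi}\psi\,d\mathbf{k} + \int_{\mathcal{N}} (1-\chi)\, e^{i\lambda\phi}\psi\,d\mathbf{k}. \]
On the support of $1-\chi$ the gradient $\nabla\phi$ does not vanish, so that piece falls under the non-stationary assertion and will contribute only to the error term.

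First I would dispatch the non-stationary integral (and, by the identical computation, the final ``no critical points'' claim) by repeated integration by parts. Where $\nabla\phi\neq0$ one introduces the first-order operator
\[ L = \frac{1}{i\lambda}\,\frac{1}{|\nabla\phi|^2}\sum_{j=1}^d \overline{\partial_j\phi}\;\partial_j, \qquad L\,e^{i\lambda\phi}=e^{i\lambda\phi}, \]
and transposes it onto the amplitude. Each application produces a factor $\lambda^{-1}$ while the boundary terms vanish because the integrand is compactly supported; iterating $N$ times and bounding the resulting smooth integrand yields $O(\lambda^{-N})$ for every $N$. For a genuinely complex phase one replaces $\nabla\phi$ by its holomorphic gradient; analyticity together with the hypothesis $\mathrm{Re}(\varphi)\ge0$ keeps the exponential bounded, so the estimate persists.

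Next I would evaluate the stationary piece by a Morse reduction. Since $\phi$ is analytic and $\mathbf{k}_0$ is a non-degenerate critical point, the holomorphic Morse lemma supplies an analytic change of variables $\mathbf{k}\mapsto\mathbf{y}$ near $\mathbf{k}_0$ with $\mathbf{y}(\mathbf{k}_0)=0$ and
\[ \phi(\mathbf{k}) = \phi(\mathbf{k}_0) + \frac12\sum_{j=1}^d \epsilon_j\, y_j^2, \]
where $\epsilon_j\in\{\pm1\}$ are the signs of the eigenvalues of $\mathrm{Hess}_\phi(\mathbf{k}_0)$ and the Jacobian at the origin equals $|\det\mathrm{Hess}_\phi(\mathbf{k}_0)|^{-1/2}$. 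Replacing $\psi$ by $\psi(\mathbf{k}_0)$ and extending each integration to all of $\mathbb{R}$ costs only lower order, so the leading term factorizes into one-dimensional Fresnel integrals
\[ \int_{\mathbb{R}} e^{i\lambda\epsilon_j y^2/2}\,dy = \sqrt{\frac{2\pi}{\lambda}}\;e^{\,i\,\mathrm{sgn}(\epsilon_j)\pi/4}. \]
Multiplying these over $j$ assembles the advertised constant $(2\pi/\lambda)^{d/2}$, the factor $|\det\mathrm{Hess}_\phi(\mathbf{k}_0)|^{-1/2}$ from the Jacobian, and the cumulative phase $e^{i\sigma\pi/4}$ with $\sigma=\#\{j:\epsilon_j=+1\}-\#\{j:\epsilon_j=-1\}$, which is exactly the signature convention attached to $\sqrt{\mathrm{Hess}_\phi(\mathbf{k}_0)}$ in the statement.

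The hard part is making the Morse reduction rigorous in the complex-analytic category and controlling the passage to the model Gaussian. With a complex phase one cannot integrate along $\mathbb{R}^d$ directly: the real contour must be deformed into the complex domain so that, along it, the quadratic form $\sum_j\epsilon_j y_j^2$ acquires the definiteness needed for the Fresnel integrals to converge, and the branch of $\sqrt{\det\mathrm{Hess}_\phi(\mathbf{k}_0)}$, i.e.\ the signature phase, must be tracked consistently through that deformation. Establishing the uniformity of the error terms and the localization estimate alongside this bookkeeping is the technical core; the remaining Gaussian evaluations are routine and are carried out in \cite{PW,Ste}.
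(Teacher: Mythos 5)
The paper itself offers no proof of this lemma: it is imported verbatim (with minor notational damage) from the cited literature, ``see for example \cite{PW,Ste}''. Your sketch reconstructs exactly the argument those references give --- cutoff near $\mathbf{k}_0$, non-stationary phase by transposing a first-order operator $L$ with $Le^{i\lambda\phi}=e^{i\lambda\phi}$, Morse normal form $\phi(\mathbf{k}_0)+\frac12\sum_j\epsilon_j y_j^2$, and one-dimensional Fresnel integrals --- so in structure you are on the route the paper implicitly endorses. Your bookkeeping is correct where it is checkable: the Jacobian of the Morse change of variables at the critical point is indeed $|\det\mathrm{Hess}_\phi(\mathbf{k}_0)|^{-1/2}$, the product of the phases $e^{i\epsilon_j\pi/4}$ gives $e^{i\sigma\pi/4}$ with $\sigma$ equal to the number of positive minus the number of negative eigenvalues, matching the paper's stated signature convention, and you silently repair an inconsistency in the statement itself (the integrand carries $e^{\lambda\phi}$ while the asymptotics carry $e^{i\lambda\phi(\mathbf{k}_0)}$; the intended reading, as in the paper's application where the phase is $in(\phi(\mathbf{k})-\langle\mathbf{k},\mathbf{x}/n\rangle)$, is a purely oscillatory exponent). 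Deferring the complex-contour deformation to \cite{PW,Ste} is acceptable here, since the paper defers the entire proof to the same sources.

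One step is wrong as written, though fixable. You discard the boundary terms in the iterated integration by parts ``because the integrand is compactly supported'', but the amplitude of the non-stationary piece is $(1-\chi)\psi$, which equals $\psi\neq 0$ on a neighborhood of $\partial\mathcal{N}$; transposing $L$ therefore produces boundary integrals over $\partial\mathcal{N}$ of size $O(\lambda^{-1})$ that do not improve under iteration, so your argument yields only $O(\lambda^{-1})$, not $O(\lambda^{-N})$ for all $N$. Indeed the rapid-decay claim is false without an extra hypothesis: $\int_0^1 e^{i\lambda k}\,dk=(e^{i\lambda}-1)/(i\lambda)$ is exactly of order $\lambda^{-1}$ despite the phase having no critical point. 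This defect is inherited from the lemma's loose formulation, but your proof should name the hypothesis that rescues it: either the amplitude vanishes to all orders at $\partial\mathcal{N}$, or there is no boundary at all. The latter is what actually holds in the paper's use of the lemma --- the integrals in Theorem 2 and Proposition 4 run over the torus $K^d=[0,2\pi)^d$ with smooth periodic integrands (note $e^{in(\phi(\mathbf{k})-\langle\mathbf{k},\mathbf{x}/n\rangle)}=e^{in\phi(\mathbf{k})}e^{-i\langle\mathbf{k},\mathbf{x}\rangle}$ is periodic for $\mathbf{x}\in L$), so all boundary terms cancel by periodicity. Replace the appeal to compact support of $(1-\chi)\psi$ by that observation and the non-stationary estimate, and with it the localization step, goes through.
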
 
Typical examples of the crystal lattice are $\mathbb{Z}^d$, the triangular lattice and the hexagonal lattice. 
In the next section, we show that all three of these lattices satisfy assumptions (a) and (b). 
\segawa{Investigating the general properties of $\Phi^{(o)}$ is outside the scope of this paper, but remains an interesting future's problem.}
In the discrete-time quantum walk on \segawa{the} triangular lattice proposed in \cite{KSKJ}, 
\segawa{which differs from that presented here,  
by the {\it degenerate} critical points, 
the decay rate of the return probability $p_n$ in terms of time $n$ becomes slow down: 
$p_n\propto n^{-4/3}$ for large time step $n$, suggesting that the spreading rate is less than unity. }

By using Lemma~\ref{stationary}, we have the following theorem related to \segawa{localization}.
\begin{theorem}\label{loc_cycle}
Under Assumption~\ref{assumption} (a) and (b), if the quotient graph $G^{(o)}$ satisfies $|E^{(o)}|-|V^{(o)}|\geq 1$ or 
the discriminant operator $\widetilde{T}^{(o)}_{\mathbf{k}}$ has a constant eigenvalue with respect to $\mathbf{k}\in \mathbb{R}^d$, 
then an appropriate choice of the initial state ensures localization of the Grover walk on $G$. 
\end{theorem}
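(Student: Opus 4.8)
The plan is to trace localization to the presence of a \emph{flat band}, i.e.\ a constant eigenvalue $e^{i\phi_0}$ (with $|e^{i\phi_0}|=1$) of the fibered operator $\widetilde{U}^{(o)}_{\mathbf{k}}$ that does not move as $\mathbf{k}$ ranges over $K^d$, and then to show that the portion of the evolved state carried by this band never disperses. Throughout I write $\Psi_n(\mathbf{x})$ for the fiber vector $(\Psi_n(\mathbf{x},v))_{v\in V^{(o)}}$, so that the finding distribution is the squared fiber norm $\mu_n(\mathbf{x})=\sum_{v\in V^{(o)}}\|\Psi_n(\mathbf{x},v)\|^2$.

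First I would extract the flat band from each hypothesis. If $|E^{(o)}|-|V^{(o)}|\geq 1$, then Proposition~\ref{cristalspec} gives $\pm 1\in\mathrm{spec}(\widetilde{U}^{(o)}_{\mathbf{k}})$ with multiplicity $|E^{(o)}|-|V^{(o)}|\geq 1$ for almost every $\mathbf{k}$; since $\pm1$ do not depend on $\mathbf{k}$ these are flat bands, whose fibers are the cycle eigenspaces $\tilde{\mathcal{M}}^{(o)}_\pm$ identified in Proposition~\ref{fouriecycle} and Theorem~\ref{cycles} with the cycle spaces $\mathcal{C}_\pm$. If instead $\widetilde{T}^{(o)}_{\mathbf{k}}$ has an eigenvalue $\cos\phi_0$ constant in $\mathbf{k}$, the spectral map of Proposition~\ref{key} lifts it to the constant eigenvalue $e^{i\phi_0}$ of $\widetilde{U}^{(o)}_{\mathbf{k}}$, whose fiber eigenvector is given by the formulas there. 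In either case $\widetilde{U}^{(o)}_{\mathbf{k}}$ admits a measurable family of unit eigenvectors $\Xi_{\mathbf{k}}$ with the constant eigenvalue $e^{i\phi_0}$.

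Next I would fix the initial state and split the dynamics. I would take $\Psi_0$ so that its Fourier transform has nonzero overlap $a(\mathbf{k})=\langle\Xi_{\mathbf{k}},\widetilde{\Psi}_0(\mathbf{k})\rangle$ with the flat band on a set of positive Lebesgue measure; this is the ``appropriate'' choice. Under the first hypothesis one concrete such state is a single compactly supported cycle vector $\bs{\gamma}(c)$ (or $\bs{\tau}(c)$) attached to a finite cycle $c$ of $G$, which is an exact eigenvector of $U_{Grover}$. Decomposing $\widetilde{U}^{(o)}_{\mathbf{k}}$ into the flat band together with the dispersive bands $e^{i\phi(\mathbf{k})}$, $\phi\in\Phi^{(o)}\setminus\Lambda$, and using the intertwining relation~(\ref{FT2}), the amplitude at a fixed site $\mathbf{x}\in L$ becomes
\[
\Psi_n(\mathbf{x})=e^{in\phi_0}\,\Psi_\infty(\mathbf{x})+R_n(\mathbf{x}),\qquad
\Psi_\infty(\mathbf{x})=\int_{K^d}a(\mathbf{k})\,\Xi_{\mathbf{k}}\,e^{-i\langle\mathbf{k},\mathbf{x}\rangle}\,\frac{d\mathbf{k}}{(2\pi)^d},
\]
where $\Psi_\infty$ is independent of $n$ and $R_n$ gathers the dispersive bands. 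By Parseval, $\sum_{\mathbf{x}\in L}\|\Psi_\infty(\mathbf{x})\|^2=\int_{K^d}|a(\mathbf{k})|^2\,\tfrac{d\mathbf{k}}{(2\pi)^d}>0$, so $\Psi_\infty(\mathbf{x}_0)\neq 0$ for some $\mathbf{x}_0\in L$.

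Finally I would control the remainder and conclude. On each dispersive band the phase $n\phi(\mathbf{k})-\langle\mathbf{k},\mathbf{x}\rangle$ is non-stationary except at finitely many critical points, so under Assumption~\ref{assumption}~(a),(b) the stationary phase estimate of Lemma~\ref{stationary} gives each contribution to $R_n(\mathbf{x})$ of order $O(n^{-d/2})$; hence $R_n(\mathbf{x})\to 0$ for fixed $\mathbf{x}$. Since $|e^{in\phi_0}|=1$ it follows that $\mu_n(\mathbf{x}_0)=\sum_v\|\Psi_n(\mathbf{x}_0,v)\|^2\to\|\Psi_\infty(\mathbf{x}_0)\|^2>0$, so $\limsup_{n\to\infty}\mu_n(\mathbf{x}_0)>0$ and localization occurs. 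The step I expect to be the main obstacle is this dispersive estimate together with the handling of the measure-zero set of $\mathbf{k}$ where bands cross (so that $\Xi_{\mathbf{k}}$ or the non-degeneracy in Assumption~\ref{assumption}~(b) may fail); such sets carry no Lebesgue mass and so do not affect the integrals above, but making the stationary-phase bound uniform away from them, and verifying that the chosen $\Psi_0$ really has positive flat-band overlap $a$, is where the care lies. When $\Psi_0$ is taken purely in the flat band the remainder $R_n$ vanishes identically and the conclusion is immediate, which is exactly why an ``appropriate choice'' of the initial state suffices.
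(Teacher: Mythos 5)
Your proposal is correct and takes essentially the same route as the paper's proof: both decompose $\widetilde{U}^{(o)}_{\mathbf{k}}$ into dispersive bands, whose pointwise contribution vanishes by the stationary-phase Lemma~\ref{stationary} under Assumption~\ref{assumption}, plus the $\mathbf{k}$-independent eigenvalues (the $\pm 1$ eigenspaces $\widetilde{\mathcal{M}}^{(o)}_{\pm}$, identified with the cycle spaces $\mathcal{C}_{\pm}$ via Proposition~\ref{fouriecycle}, or the constant eigenvalues in $\Lambda$), whose contribution survives for any initial state with nonzero flat-band overlap. Your explicit Parseval computation and the purely flat-band choice $\bs{\gamma}(c)$ simply make concrete the paper's condition $\widetilde{\Psi}_0(\mathbf{k}) \notin \bigl(\widetilde{\mathcal{M}}^{(o)}_{+}+\widetilde{\mathcal{M}}^{(o)}_{-}+\sum_{\alpha\in\Lambda}\mathbb{C}\omega_{\alpha}\bigr)^{\bot}$.
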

\begin{proof}
Suppose that the initial state of the Grover walk on \segawa{the} crystal lattice $G$ is 
$\Psi_0\in \ell^2(L\times D^{(o)})$ and that $\mathcal{F}^*\Psi_0=\widetilde{\Psi}_0(\mathbf{k})$. 
\segawa{Applying} $\mathcal{F}$ on \sato{$\widetilde{\Psi}_n(\mathbf{k},u)$}, we get 
\[ \Psi_n(\mathbf{x},u) = \int_{0\;d}^{2\pi} e^{-i\langle \mathbf{k},\mathbf{x} \rangle}\widetilde{\Psi}_n(\mathbf{k},u) \frac{d\mathbf{k}}{(2\pi)^d}. \]
Define $\widetilde{\Psi}_n(\mathbf{k})=\sum_{u\in V^{(o)}}\widetilde{\Psi}_n(\mathbf{k},u)$. 
From Lemma~\ref{key} and Eq.~(\ref{FT}), 
we have 
\begin{multline}\label{yume}
\widetilde{\Psi}_n(\mathbf{k})
	=\sum_{\phi\in \Phi^{(o)}\smallsetminus \Lambda}e^{in \phi(\mathbf{k})} \Pi_{\omega_\phi}\widetilde{\Psi}_0(\mathbf{k})\\
               +\sum_{\alpha \in \Lambda}e^{in \alpha} \Pi_{\omega_\alpha}\widetilde{\Psi}_0(\mathbf{k}) 
               +\left(\Pi_{\widetilde{\mathcal{M}}_{+}^{(o)}}+(-1)^n\Pi_{\widetilde{\mathcal{M}}_-^{(o)}}\right)\widetilde{\Psi}_0(\mathbf{k}). 
\end{multline}
Define $\Psi_n(\mathbf{x})=\sum_{u\in V^{(o)}}\Psi_n(\mathbf{x},u)$. 
Applying $\mathcal{F}$ on both sides of Eq.~(\ref{yume}), we get 
\begin{multline}\label{fat}
	\Psi_n(\mathbf{x}) =  
        	\sum_{\phi\in \Phi^{(o)}\smallsetminus \Lambda}\int_{0\;d}^{2\pi} \exp\left\{in (\phi(\mathbf{k})-\langle \mathbf{k},\mathbf{x}/n \rangle)\right\} \Pi_{\omega_\phi}\widetilde{\Psi}_0(\mathbf{k}) \frac{d\mathbf{k}}{(2\pi)^{d}} \\
               +\sum_{\alpha \in \Lambda}e^{in \alpha} \int_{0\;d}^{2\pi} e^{-i\langle \mathbf{k},\mathbf{x} \rangle} \Pi_{\omega_\alpha}\widetilde{\Psi}_0(\mathbf{k}) \frac{d\mathbf{k}}{(2\pi)^{d}} \\
               +\int_{0\;d}^{2\pi} e^{-i\langle \mathbf{k},\mathbf{x} \rangle} \left(\Pi_{\widetilde{\mathcal{M}}_{+}^{(o)}}+(-1)^n\Pi_{\widetilde{\mathcal{M}}_-^{(o)}}\right)\widetilde{\Psi}_0(\mathbf{k})\frac{d\mathbf{k}}{(2\pi)^d}. 
\end{multline}
By Lemma~\ref{stationary}, \segawa{the first term vanishes in the limit of large $n$.} 
Then for large time step $n$, we have 
\begin{multline}
	P(X_n^{(\Psi_0)}=\mathbf{x}) \sim 
	\bigg|\bigg| \int_{0\;d}^{2\pi} e^{-i\langle \mathbf{k},\mathbf{x} \rangle} 
        \bigg\{ \sum_{\alpha\in \Lambda} e^{in \alpha}\Pi_{\omega_\alpha} 
        	+\Pi_{\widetilde{\mathcal{M}}_{+}^{(o)}}+(-1)^n\Pi_{\widetilde{\mathcal{M}}_-^{(o)}} \bigg\} \widetilde{\Psi}_0(\mathbf{k})\frac{d\mathbf{k}}{(2\pi)^d} \bigg|\bigg|^2.
\end{multline}
\segawa{Thus localization is assured by \segawa{the} appropriate initial state $\Psi_0$ satisfying $\widetilde{\Psi}_0(\mathbf{k}) \notin 
(\widetilde{\mathcal{M}}_{+}^{(o)}+\widetilde{\mathcal{M}}_{-}^{(o)}+\sum_{\alpha\in \Lambda}\mathbb{C}\omega_\alpha)^{\bot}$. }
In particular, \HS{$\tilde{\mathcal{M}}_\pm^{(o)} \neq \emptyset$ if $|E^{(o)}|-|V^{(o)}|\geq 1$; thus}
Proposition~\ref{fouriecycle} implies that 
if we choose the initial state $\Psi_0\in\ell^2(L\times D^{(o)})$ so that $\Psi_0\notin \mathrm{span}\{\mathcal{C}_+,\mathcal{C}_-\}^{\bot}$, 
then \[\limsup_{n\to\infty}P(X_n^{(\Psi_0)}=j)>0. \]
Thus we obtain the desired conclusion. 
\end{proof}
\begin{remark}
\segawa{Figure 1 shows a crystal lattice $|E^{(o)}|-|V^{(o)}|=0$ but localization occurs. 
This lattice can be also seen in Sect.~2.2~\cite{HS}. The details of DTQW on such a graph will be seen in the forth coming paper~\cite{HSeg}. }
\end{remark}
\begin{figure}[htbp]
\begin{center}
	\includegraphics[width=50mm]{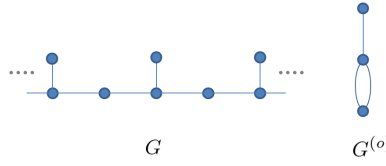}
\end{center}
\caption{{\scriptsize An example of a graph in which $|E^{(o)}|-|V^{(o)}|=0$ but $T^{(o)}_{\mathbf{k}}$ has a constant eigenvalue. 
$G$ is the original graph and $G^{(o)}$ is its quotient graph. }}
\label{fig:mushimegane}
\end{figure}
Finally we consider the contribution of $\Phi^{(o)}\smallsetminus \Lambda$ to the behavior of the Grover walk. 
For this purpose, we take the \segawa{scaling} to $X_n$; $X_n/n$. 
\begin{proposition}\label{weaklem}
Let the initial state be $\Psi_0\in \mathcal{H}$ and its Fourier transform be $\widetilde{\Psi}_0$. Then under assumptions (a) and (b), we have 
\begin{equation}\label{G-lemma}
\lim_{n\to \infty} E[e^{i\langle\bs{\xi}, X_n\rangle/n}]
	=c_0
        	+\int_{\mathbf{k}\in [0,2\pi)^d} \sum_{\phi\in \Phi_o}
        		e^{i\langle\bs{\xi},\nabla \phi\rangle}|| \Pi_{\omega_\phi}\widetilde{\Psi}_0(\mathbf{k}) ||^2  \frac{d\mathbf{k}}{(2\pi)^d}.
\end{equation}
Here 
\[c_0=  
	\int_{\mathbf{k}\in[0,2\pi)^d} \sum_{\alpha\in \Lambda} \left|\left|\Pi_{\omega_\alpha}\widetilde{\Psi}_0(\mathbf{k})\right|\right|^2 \frac{d\mathbf{k}}{(2\pi)^d}
        	+\left|\left|\left(\Pi_{\mathcal{C}_+}+\Pi_{\mathcal{C}_-}\right) \Psi_0\right|\right|^2. \]
\end{proposition}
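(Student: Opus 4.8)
The plan is to transport the characteristic function to the Fourier side, where the spectral decomposition (\ref{yume}) is available, and then to read off the three contributions branch by branch. Since $\mu_n(\mathbf{x})=\|\Psi_n(\mathbf{x})\|^2$ with $\Psi_n(\mathbf{x})=\sum_{u\in V^{(o)}}\Psi_n(\mathbf{x},u)\in \ell^2(D^{(o)})$ as in Eq.~(\ref{fat}), I would write $E[e^{i\langle \bs{\xi},X_n\rangle/n}]=\sum_{\mathbf{x}\in L}e^{i\langle \bs{\xi},\mathbf{x}\rangle/n}\|\Psi_n(\mathbf{x})\|^2$ and substitute $\Psi_n(\mathbf{x})=\int_{K^d}e^{-i\langle \mathbf{k},\mathbf{x}\rangle}\widetilde{\Psi}_n(\mathbf{k})\,d\mathbf{k}/(2\pi)^d$ in both factors of the modulus square. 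Summing the resulting exponential over the lattice $L$ yields a periodic delta forcing $\mathbf{k}'=\mathbf{k}-\bs{\xi}/n$, so the first reduction is
\begin{equation}\label{reduced}
E[e^{i\langle \bs{\xi},X_n\rangle/n}]=\int_{K^d}\big\langle \widetilde{\Psi}_n(\mathbf{k}-\bs{\xi}/n),\,\widetilde{\Psi}_n(\mathbf{k})\big\rangle\,\frac{d\mathbf{k}}{(2\pi)^d}.
\end{equation}

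Next I would insert (\ref{yume}) for both $\widetilde{\Psi}_n(\mathbf{k})$ and $\widetilde{\Psi}_n(\mathbf{k}-\bs{\xi}/n)$ and expand the inner product into diagonal and cross terms, grouped by the three summands of (\ref{yume}): the moving branches $\phi\in\Phi^{(o)}\smallsetminus\Lambda$, the flat branches $\alpha\in\Lambda$, and the cycle part carried by $\widetilde{\mathcal{M}}_\pm^{(o)}$. A diagonal moving term carries the factor $e^{in(\phi(\mathbf{k})-\phi(\mathbf{k}-\bs{\xi}/n))}$; Taylor-expanding $\phi(\mathbf{k}-\bs{\xi}/n)=\phi(\mathbf{k})-\langle \nabla\phi(\mathbf{k}),\bs{\xi}\rangle/n+O(n^{-2})$ makes this converge pointwise to $e^{i\langle \bs{\xi},\nabla\phi(\mathbf{k})\rangle}$, and after a dominated-convergence argument it produces exactly the advertised integral $\int_{K^d}\sum_\phi e^{i\langle \bs{\xi},\nabla\phi\rangle}\|\Pi_{\omega_\phi}\widetilde{\Psi}_0(\mathbf{k})\|^2\,d\mathbf{k}/(2\pi)^d$. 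For the flat and cycle parts I would use that distinct quantum-walk eigenvalues have mutually orthogonal fibre eigenspaces: as $\bs{\xi}/n\to 0$, the cross terms between $\omega_\alpha$ and $\omega_{\alpha'}$ ($\alpha\ne\alpha'$), between $\omega_\alpha$ and $\widetilde{\mathcal{M}}_\pm^{(o)}$, and between $\widetilde{\mathcal{M}}_+^{(o)}$ and $\widetilde{\mathcal{M}}_-^{(o)}$ all vanish, while on the diagonal the $(-1)^n$ phase squares to $1$. This leaves $\int\sum_{\alpha\in\Lambda}\|\Pi_{\omega_\alpha}\widetilde{\Psi}_0\|^2\,d\mathbf{k}/(2\pi)^d$ together with $\int(\|\Pi_{\widetilde{\mathcal{M}}_+^{(o)}}\widetilde{\Psi}_0\|^2+\|\Pi_{\widetilde{\mathcal{M}}_-^{(o)}}\widetilde{\Psi}_0\|^2)\,d\mathbf{k}/(2\pi)^d$, and by Proposition~\ref{fouriecycle} together with the Plancherel identity the last two integrals equal $\|\Pi_{\mathcal{C}_+}\Psi_0\|^2+\|\Pi_{\mathcal{C}_-}\Psi_0\|^2=\|(\Pi_{\mathcal{C}_+}+\Pi_{\mathcal{C}_-})\Psi_0\|^2$. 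Assembling the surviving pieces reproduces $c_0$.

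The main obstacle is showing that every cross term involving at least one moving branch tends to $0$. Such a term carries a factor $e^{in(\phi(\mathbf{k})-\phi'(\mathbf{k}))}$ against a flat or cycle term, or $e^{in(\phi(\mathbf{k})-\phi'(\mathbf{k}-\bs{\xi}/n))}$ between two distinct moving branches, whose phase is a nonconstant analytic function of $\mathbf{k}$ by Assumption~\ref{assumption}(a). Away from its critical set one integrates by parts to gain arbitrary negative powers of $n$, whereas near the critical set, which is of measure zero by analyticity, Lemma~\ref{stationary} bounds the contribution by a negative power of $n$; hence every such integral vanishes in the limit. The remaining care lies in justifying the interchange of $\lim_{n\to\infty}$ with the $\mathbf{k}$-integral in (\ref{reduced}), which follows from dominated convergence using that the spectral projections are bounded uniformly in $\mathbf{k}$, and in verifying that $\phi-\phi'$ is genuinely nonconstant for distinct branches so that the stationary-phase/Riemann--Lebesgue estimate truly applies.
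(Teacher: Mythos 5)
Your proposal is correct and follows essentially the same route as the paper's own proof: the Fourier-side identity $E[e^{i\langle\bs{\xi},X_n\rangle}]=\int_{K^d}\langle\widetilde{\Psi}_n(\mathbf{k}),\widetilde{\Psi}_n(\mathbf{k}+\bs{\xi})\rangle\,d\mathbf{k}/(2\pi)^d$ with $\bs{\xi}\mapsto\bs{\xi}/n$, the spectral decomposition (\ref{yume}), Taylor expansion of $\phi$ producing $e^{i\langle\bs{\xi},\nabla\phi\rangle}$ on the diagonal, Lemma~\ref{stationary} to kill cross terms, and Proposition~\ref{fouriecycle} plus Parseval to convert the $\widetilde{\mathcal{M}}_\pm^{(o)}$ integrals into $\|\Pi_{\mathcal{C}_\pm}\Psi_0\|^2$. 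Your added care about dominated convergence and the nonconstancy of phase differences between distinct branches merely fills in details the paper leaves implicit.
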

\begin{proof}
From the definitions of the spatial Fourier transform and the characteristic function for $X_n$, we can write 
\begin{equation}\label{slemma} 
E[e^{i\langle\bs{\xi}, X_n\rangle}]=\int_{\mathbf{k}\in [0,2\pi)^d} \langle \widetilde{\Psi}_n(\mathbf{k}),\widetilde{\Psi}_n(\mathbf{k}+\bs{\xi}) \rangle \frac{d\mathbf{k}}{(2\pi)^d}. 
\end{equation}
\HS{Replacing $\bs{\xi}$ with $\bs{\xi}/n$, we have the expressions $\phi\in \Psi_o$, $\phi(\mathbf{k}+\bs{\xi}/n)=\phi(\mathbf{k})+\langle \bs{\xi}/n, \nabla \phi(\mathbf{k})\rangle+O(1/n^2)$, 
and the eigenfunctions $\omega_\phi(\mathbf{k}+\bs{\xi}/n,e)=\omega_\phi(\mathbf{k},e)+O(1/n)$. 
Substituting these and Eq.~(\ref{yume}) with $\widetilde{\Psi}_n(\cdot)$ in Eq.~(\ref{slemma}), 
we can find that all cross terms in the inner product of the integrand in Eq.~(\ref{slemma}) are $O(1/n)$ by Lemma~\ref{stationary}. 
Thus if $n\to\infty$, only the diagonal terms retained as follows: }
\[ \sum_{\phi\in \Phi_o}
        		e^{i\langle\bs{\xi},\nabla \phi\rangle}|| \Pi_{\omega_\phi}\widetilde{\Psi}_0(\mathbf{k}) ||^2
                        +|| \Pi_{\tilde{\mathcal{M}}^{(o)}_+}\widetilde{\Psi}_0(\mathbf{k}) ||^2+|| \Pi_{\tilde{\mathcal{M}}^{(o)}_-}\widetilde{\Psi}_0(\mathbf{k}) ||^2
                        +\sum_{\alpha\in \Lambda} ||\Pi_{\omega_\alpha}\widetilde{\Psi}_0(\mathbf{k})||^2. \] 
The integral of the second term $|| \Pi_{\tilde{\mathcal{M}}^{(o)}_+}\widetilde{\Psi}_0(\mathbf{k}) ||^2$ is rewritten as 
\begin{align*}
	\int_{0\;d}^{2\pi} || \Pi_{\tilde{\mathcal{M}}^{(o)}_+}\widetilde{\Psi}_0(\mathbf{k}) ||^2 \frac{d\mathbf{k}}{(2\pi)^d}
        	&= \sum_{\mathbf{x}\in L} \left|\left|\int_{0\;d}^{2\pi} \Pi_{\tilde{\mathcal{M}}^{(o)}_+}\widetilde{\Psi}_0(\mathbf{k}) e^{-i\langle \mathbf{k},\mathbf{x} \rangle}\frac{d\mathbf{k}}{(2\pi)^d} \right|\right|^2 \\
                &= \sum_{\mathbf{x}\in L} \left|\left|\Pi_{\mathbf{x}}\Pi_{\mathcal{C}_+}\Psi_0\right|\right|^2 \\
                &= \left|\left|\Pi_{\mathcal{C}_+}\Psi_0\right|\right|^2. 
\end{align*}
Here the second equality follows from Proposition~\ref{fouriecycle}. 
\segawa{Similarly, the third term is integrated as} 
\[ \int_{0\;d}^{2\pi} || \Pi_{\tilde{\mathcal{M}}^{(o)}_-}\widetilde{\Psi}_0(\mathbf{k}) ||^2 \frac{d\mathbf{k}}{(2\pi)^d}=\left|\left|\Pi_{\mathcal{C}_-}\Psi_0\right|\right|^2. \]
Thus we complete the proof. 
\end{proof}

\begin{theorem}\label{linear_spreading}
Under assumptions (a) and (b), the Grover walk on the crystal lattice $G$ exhibits linear spreading. 
\end{theorem}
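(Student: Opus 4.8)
The plan is to read the second moment of $X_n/n$ directly off the limit characteristic function supplied by Proposition~\ref{weaklem} and to show it lies in $(0,\infty)$. The first step is to establish \emph{finite propagation}: since each application of $U_{Grover}$ transports amplitude at cell $\mathbf{x}$ only to the cells $\mathbf{x}-\hat{\theta}(f)$, and the shift vectors $\{\hat{\theta}(f):f\in D^{(o)}\}$ form a finite set, one has $\|X_n\|\le c\,n$ surely, with $c=\max_{f}\|\hat{\theta}(f)\|$. Consequently the laws of $X_n/n$ are all supported in the fixed ball $\{\|\mathbf{y}\|\le c\}$, so the unbounded map $\mathbf{y}\mapsto\|\mathbf{y}\|^2$ may be replaced by a bounded continuous function on that ball, and the weak convergence encoded in Eq.~(\ref{G-lemma}) upgrades to convergence of the second moment.

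Next I would compute that limiting second moment by applying $-\Delta_{\bs{\xi}}|_{\bs{\xi}=\bs{0}}$ to the right-hand side of Eq.~(\ref{G-lemma}). The constant $c_0$ (the localized and flat-band contributions) is annihilated, while $\Delta_{\bs{\xi}}e^{i\langle\bs{\xi},\nabla\phi\rangle}=-\|\nabla\phi\|^2 e^{i\langle\bs{\xi},\nabla\phi\rangle}$ yields
\[
\lim_{n\to\infty}\sum_{\mathbf{x}\in L}\Big\|\tfrac{\mathbf{x}}{n}\Big\|^2\mu_n(\mathbf{x})
=\int_{[0,2\pi)^d}\sum_{\phi\in\Phi^{(o)}\smallsetminus\Lambda}\|\nabla\phi(\mathbf{k})\|^2\,\|\Pi_{\omega_\phi}\widetilde{\Psi}_0(\mathbf{k})\|^2\,\frac{d\mathbf{k}}{(2\pi)^d}.
\]
Finiteness is then immediate: each $\nabla\phi$ is bounded on the compact torus under Assumption~\ref{assumption}(a), and $\sum_\phi\|\Pi_{\omega_\phi}\widetilde{\Psi}_0\|^2\le\|\widetilde{\Psi}_0\|^2<\infty$.

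For strict positivity I would exhibit a genuinely dispersive (non-flat) band. Let $\lambda_1(\mathbf{k})$ be the Perron--Frobenius eigenvalue of $\widetilde{P}^{(o)}_{\mathbf{k}}$: at $\mathbf{k}=\bs{0}$ the matrix $\widetilde{P}^{(o)}_{\bs{0}}$ is the genuine stochastic matrix on the connected graph $G^{(o)}$, so $\lambda_1(\bs{0})=1$; whereas for $\mathbf{k}$ with $\langle\mathbf{k},\hat{\theta}(e)\rangle\notin\pi\mathbb{Z}$ for some $e\in D^{(o)}$ we are in case (iv) of Lemma~\ref{multm}, whence $1\notin\mathrm{spec}(\widetilde{T}^{(o)}_{\mathbf{k}})=\mathrm{spec}(\widetilde{P}^{(o)}_{\mathbf{k}})$ and $\lambda_1(\mathbf{k})<1$. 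Thus the branch $\phi_1(\mathbf{k})=\arccos\lambda_1(\mathbf{k})$ is non-constant, i.e.\ $\phi_1\in\Phi^{(o)}\smallsetminus\Lambda$; being real-analytic (Assumption~\ref{assumption}(a)) and non-constant, its gradient vanishes only on a null set, for otherwise $\phi_1$ would be locally constant. Choosing the initial state $\Psi_0$ so that $\Pi_{\omega_{\phi_1}}\widetilde{\Psi}_0(\mathbf{k})\ne 0$ on a positive-measure subset of $\{\nabla\phi_1\ne\bs{0}\}$---which holds for generic $\Psi_0$---makes the displayed integral strictly positive. This places the limit in $(0,\infty)$, establishing linear spreading.

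The step I expect to be the main obstacle is positivity, and specifically the guarantee that the absolutely continuous part of the spectrum is truly dispersive: one must confirm the existence of at least one non-flat analytic band through the Perron--Frobenius behavior of $\widetilde{P}^{(o)}_{\mathbf{k}}$ as $\mathbf{k}$ varies, and then argue that such a band cannot have $\nabla\phi$ degenerate on a positive-measure set. Everything else---the moment convergence and the finiteness bound---is routine once finite propagation is in hand.
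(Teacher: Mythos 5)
Your proposal is correct and follows the paper's route exactly: the paper's entire proof of Theorem~\ref{linear_spreading} is the one-line remark that it directly follows from Proposition~\ref{weaklem}, and your argument is precisely a fleshed-out version of that deduction. The details you supply---finite propagation to upgrade the weak convergence encoded in Eq.~(\ref{G-lemma}) to second-moment convergence, and the non-constant top band (non-flat since $1\in\mathrm{spec}(\widetilde{P}^{(o)}_{\bs{0}})$ for the connected quotient but $1\notin\mathrm{spec}(\widetilde{P}^{(o)}_{\mathbf{k}})$ in case~(iv) of Lemma~\ref{multm}) ensuring strict positivity for suitable initial states---are exactly what the paper leaves implicit.
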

\begin{proof}
\segawa{The proof is directly follows from Proposition~\ref{weaklem}.}  
\end{proof}
\section{\segawa{Examples}}
\segawa{
In this section, we confirm that the Grover walks on the $d$-dimensional square lattice, triangular lattice and hexagonal lattice satisfy Assumption~\ref{assumption};
that is, these walks exhibit both localization and linear spreading. 
Finally we compute explicit expressions for the proper limit distribution 
of the Grover walk on the $d$-dimensional square lattice, $\mathbb{Z}^d$. 
}

Given \segawa{a} crystal lattice $G$ with the quotient graph $G^{(o)}$, we assume that the lattice $L$ is embedded in $\mathbb{R}^d$. 
The $r$-unit \segawa{vector} $\hat{\theta}_1,\dots,\hat{\theta}_r\in \mathbb{R}^d$ with $r\geq d$ corresponds to $1$-form 
$\theta(e_1),\dots,\theta(e_r)$, $(e_j\in E^{(o)}\smallsetminus E(\mathbb{T}^{(o)}))$; that is, $\theta(e_j)=\langle \hat{\theta}_j,\mathbf{k} \rangle$ for a fixed $\mathbf{k}={}^T(k_1,\dots,k_d)\in K^d$. 
Here $r=|E^{(o)}|-|E(\mathbb{T}^{(o)}))|$. 
From $\{\hat{\theta}_1,\dots,\hat{\theta}_r\}$, we can choose $d$-linearly independent vectors $\{\hat{\theta}_{n_1},\dots,\hat{\theta}_{n_d} \}$. 
Define \segawa{the} $d\times d$ matrix $J$ by $J={}^T[\hat{\theta}_{n_1},\dots,\hat{\theta}_{n_d}]$. 
The linearly independency implies $\mathrm{det}(J)\neq 0$. 
Taking $\theta_j\equiv \theta(e_{n_j})$, 
remark that  
$\partial \phi/\partial k_1=\partial \phi/\partial k_2=\cdots=\partial \phi/\partial k_d=0$ if and only if 
$\partial \phi/\partial \theta_{1}=\partial \phi/\partial \theta_{2}=\cdots=\partial \phi/\partial \theta_{d}=0$. 
Moreover since $J\widetilde{\mathrm{H}}_\theta {}^{T}J=\mathrm{H}_k$, 
it holds that \[\mathrm{det}(\mathrm{H}_k)=\mathrm{det}^2(\mathrm{J})\mathrm{det}(\widetilde{\mathrm{H}}_\theta). \] 
Here 
\[ (\mathrm{H}_k)_{l,m}=\frac{\partial^2\phi}{\partial k_l\partial k_m} \mathrm{\;and\;} 
(\widetilde{\mathrm{H}}_\theta)_{l,m}=\frac{\partial^2\phi}{\partial \theta_l\partial \theta_m}. \]
\segawa{So we use parameters $\theta_1,\dots,\theta_d$ instead of $k_1,\dots,k_d$ in the following three examples. }
\begin{figure}[htbp]
\begin{center}
	\includegraphics[width=100mm]{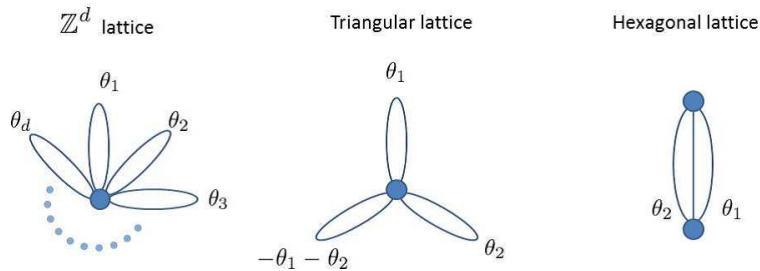}
\end{center}
\caption{The quotient graphs of $\mathbb{Z}^d$, triangular lattice and hexagonal lattice}
\label{fig:one}
\end{figure}
\subsection{\HS{Square lattices}}
\segawa{The $d$-dimensional square lattice is well-known as} the maximal abelian covering graph of the $d$-bouquet; 
that is, one vertex with $d$ self loops $\{e_1,\dots,e_d\}$. 
The $1$-form assigns $\theta(e_1)=\theta_1,\dots,\theta(e_d)=\theta_d$. 

\segawa{The characteristic function of the simple random walk on $\mathbb{Z}^d$ starting from the origin is described by 
\[ \widetilde{P}^{(o)}_\theta=\left( \frac{ \sum_{j=1}^d e^{i\theta_j}+e^{-i\theta_j}} {2d}\right)^n=\left(\frac{\sum_{j=1}^d \cos \theta_j}{d}\right)^n \]
Thus we have 
	\begin{equation} \cos\phi= 1/d\cdot \sum_{j=1}^d \cos \theta_j\end{equation} }
which implies 
\begin{align}\label{zbibun}
	\frac{\partial \phi}{\partial \theta_j} = \frac{\sin\theta_j}{d\sin \phi}. 
\end{align}
Then all candidates of the critical points, in which the numerators of RHS in Eq.~(\ref{zbibun}) are zero for all $j$, are given by:
\[ \{(n_1\pi,\dots,n_d\pi): n_j\in\{0,1\} \}. \]
\segawa{The case $(\theta_1,\dots,\theta_d)\in \{(0,\dots,0),(\pi,\dots,\pi)\}$ requires special attention} because 
\segawa{the denominators of RHS in Eq.~(\ref{zbibun}) at these points are also zero.} 
Taking $\theta_j=\theta_j(\epsilon)$ with $\lim_{\epsilon \to 0}\theta_j(\epsilon)=0$, 
we have $\cos\theta\sim 1-1/2\cdot \theta_j^2(\epsilon)$ and $\sin\phi\sim  \sqrt{K_2(\epsilon)/d}$, 
where $K_2(\epsilon)=\sum_{j=1}^d \theta_j^2$. Inserting these approximations into Eq.~(\ref{zbibun}), we obtain 
\begin{equation}\label{popopo} 
\frac{\partial \phi}{\partial \theta_j}\sim \frac{1}{\sqrt{d}}\frac{\theta_j(\epsilon)}{\sqrt{K_2(\epsilon)}}.  
\end{equation}
\segawa{Here $f(x)\sim g(x)$ indicates $\lim_{x\to 0} |f(x)/g(x)|=1$.} 
Putting $t_j=\lim_{\epsilon \to 0} \theta_j(\epsilon)/\sqrt{K_2(\epsilon)}$, 
Eq.~(\ref{popopo}) implies that 
\[ (t_1,\dots,t_d)\in \partial B_d, \]  
where $\partial B_d=\{\mathbf{x}\in \mathbb{R}^d: ||\mathbf{x}||^2=1/d\}$. 
Obviously $\segawa{(0,\dots,0)}$ is outside of $\mathcal{S}_\phi$. 
In the same way, we obtain $(\pi,\dots,\pi)\notin \mathcal{S}_\phi$. 
Thus we have $\mathcal{S}_\phi=\{(n_1\pi,\dots,n_d\pi): n_j\in\{0,1\} \}\smallsetminus \{(0,\dots,0),(\pi,\dots,\pi)\}$. 

The Hessian is given by Eq.~(\ref{detJ1}). 
For $\mathbf{p}=(k_1,\dots,k_d)\in \mathcal{S}_\phi$, 
$\cos \phi(\mathbf{p})=1-2m/d$, where $m$ is the number of $\segawa{\pi}$ in the sequence $(k_1,\dots,k_d)$. 
Then it is easily checked that for all $\mathbf{p}\in \mathcal{S}_\phi$, 
\[ |\mathrm{det}(\mathrm{H}(\mathbf{p}))|=\segawa{\left(\frac{1}{2\sqrt{m(d-m)}}\right)^d} \neq 0. \]
\subsection {Triangular lattice} 
Consider the quotient graph $G^{(o)}$; one vertex with three self loops $\{e_1,e_2,e_3\}$. 
The $1$-form is $\theta(e_1)=\theta_1$, $\theta(e_2)=\theta_2$ and $\theta(e_3)=\theta_3$. 
\segawa{The triangular lattice is the abelian covering graph of $G^{(o)}$ under the relation $\theta_1+\theta_2+\theta_3=0$}. 
The transition matrix for the twisted walk on the quotient graph is described by 
\[ \widetilde{P}^{(o)}_{\segawa{\theta}}=\frac{1}{3}\left( \cos\theta_1+\cos\theta_2+\cos(\theta_1+\theta_2) \right). \]
The critical points are obtained by computing 
\begin{align*}
	\frac{\partial \phi}{\partial \theta_1} &= \frac{\sin\theta_1+\sin(\theta_1+\theta_2)}{3\sin \phi}=0, \\
	\frac{\partial \phi}{\partial \theta_2} &= \frac{\sin\theta_2+\sin(\theta_1+\theta_2)}{3\sin \phi}=0,
\end{align*}
implying that 
\[ \mathcal{S}_\phi=\{(0,\pi),(\pi,0),(\pm 2\pi/3,\pm 2\pi/3),(\pi,\pi)\}. \]
\segawa{We remark that numerators and denominators of $\partial \phi/\partial \theta_1$ and $\partial \phi/\partial \theta_2$ 
at the origin $(\theta_1,\theta_2)=(0,0)$ are both zero.} 
We now show that $(\theta_1,\theta_2)=(0,0)$ is outside of $\mathcal{S}_\phi$.
To this end, we take the limit close to the origin using a single parameter $\epsilon\in \mathbb{R}$, and set 
$\theta_1=\theta_1(\epsilon)$ and $\theta_2=\theta_2(\epsilon)$ with $\lim_{\epsilon\to 0}\theta_j(\epsilon)=0$. 
When $\epsilon$ is small, we can adopt the asymptotic forms of $\cos \theta_j\sim 1-\theta_j^2/2$ and $\sin \theta_j\sim \theta_j$ to obtain 
\begin{align}
\frac{\partial \phi}{\partial \theta_1}
	&\sim \frac{\alpha(\epsilon)+2}{\sqrt{6(\alpha^2(\epsilon)+\alpha(\epsilon)+1)}}, \\
\frac{\partial \phi}{\partial \theta_2}
	&\sim \frac{2\alpha(\epsilon)+1}{\sqrt{6(\alpha^2(\epsilon)+\alpha(\epsilon)+1)}}.        
\end{align}
Here $\alpha(\epsilon)=\theta_2(\epsilon)/\theta_1(\epsilon)$. 
We put \[ x=\frac{t+2}{\sqrt{6(t^2+t+1)}},\;y=\frac{2t+1}{\sqrt{6(t^2+t+1)}}. \]
Rotating $x$ and $y$ by one-quarter turn, we get \[ x'=\frac{1}{\sqrt{2}}(x+y),\;y'=\frac{1}{\sqrt{2}}(-x+y). \]
Then we obtain the equation of an ellipse: 
\[ (x')^2+\frac{(y')^2}{\left(\frac{1}{3}\right)}=1. \]
It holds that 
\[ (0,0)\notin \left\{\left(\frac{t+2}{\sqrt{6(t^2+t+1)}},\;\frac{2t+1}{\sqrt{6(t^2+t+1)}}\right): t\in \mathbb{R}  \right\}. \]
Therefore $(0,0)$ is outside of $\mathcal{S}_\phi$. 

The Hessian matrix is 
\[ \mathrm{H}_\theta|_{\mathcal{S}_\phi}
	=\frac{1}{\sin \phi}
        \begin{bmatrix} \cos\theta_1+\cos(\theta_1+\theta_2) & \cos(\theta_1+\theta_2) \\ 
                        \cos(\theta_1+\theta_2) & \cos(\theta_1+\theta_2)+\cos\theta_2 
        \end{bmatrix} 
\]
It is easily checked that for all $p\in \mathcal{S}_\phi$, $\mathrm{det}(\mathrm{H}_\theta(p))\neq 0$. 
\subsection{Hexagonal lattice } 
The hexagonal lattice is the maximal abelian covering graph of $G^{(o)}=(V^{(o)},D^{(o)})$ with 
\[ V^{(o)}=\{u,v\},\;\; D^{(o)}=\{e_0,e_1,e_2,\bar{e}_0,\bar{e}_1,\bar{e}_2\}, \] 
where $u=o(e_0)=o(e_1)=o(e_2)$ and $v=t(e_0)=t(e_1)=t(e_2)$. 
The $1$-form assigns $\theta(e_0)=0$, $\theta(e_1)=\theta_1$ and $\theta(e_2)=\theta_2$. 
The transition matrix of the twisted walk on the quotient graph is described by 
\[ \widetilde{P}^{(o)}_k=\frac{1}{3}
	\begin{bmatrix} 
        0& 1+e^{-i\theta_1}+e^{-i\theta_2} \\ 1+e^{i\theta_1}+e^{i\theta_2} & 0 
        \end{bmatrix}.
\]
Its eigenvalues are $\{\pm |1+e^{i\theta_1}+e^{i\theta_2}|/3 \}$. 
Taking $\cos \phi=|1+e^{i\theta_1}+e^{i\theta_2}|/3$, we have 
\begin{align}
	\frac{\partial \phi}{\partial \theta_1} &= \frac{2}{9}\frac{\sin\theta_1+\sin(\theta_1-\theta_2)}{\sin 2\phi}, \label{hex1}\\
	\frac{\partial \phi}{\partial \theta_2} &= \frac{2}{9}\frac{\sin\theta_2+\sin(\theta_2-\theta_1)}{\sin 2\phi}.\label{hex2}
\end{align}
The candidates of the critical points of $\phi$ are 
\[ \left\{ (0,0),(\pm 2\pi/3,\mp 2\pi/3),(\pm \pi,\mp \pi),(0,\pm\pi),(\pm\segawa{\pi},0) \right\} \]
\segawa{The above are solution for which both numerators of RHSs in Eqs.~(\ref{hex1}) and (\ref{hex2}) are equal to zero. }
We now show that the first two candidates $(0,0)$ and $(\pm 2\pi/3,\mp 2\pi/3)$ are excluded as the critical points. 
We take also $\theta_j=\theta_j(\epsilon)$ with $\lim_{\epsilon\to 0}\theta_j(\epsilon)=0$ 
and $\alpha(\epsilon)=\theta_2(\epsilon)/\theta_1(\epsilon)$. 
\begin{enumerate}
\item $(0,0)$ case : We can evaluate $\sin\theta_j\sim \theta_j(\epsilon)$ and 
$\sin(\theta_1-\theta_2)\sim \theta_1(\epsilon)-\theta_2(\epsilon)$, moreover 
\begin{align} 
\cos 2\phi &= 2\cos^2\phi-1=-\frac{1}{3}+\frac{4}{9}\left(\cos\theta_1+\cos\theta_2+\cos(\theta_1-\theta_2)\right)  \\
	&\sim 1-\frac{4}{9}\left(\theta_1^2(\epsilon)+\theta_2^2(\epsilon)-\theta_1(\epsilon)\theta_2(\epsilon)\right). \\
\sin^2 2\phi 
	&\sim \frac{8}{9}\left(\theta_1^2(\epsilon)+\theta_2^2(\epsilon)-\theta_1(\epsilon)\theta_2(\epsilon)\right).       
\end{align}
Inserting these estimations into Eqs.~(\ref{hex1}) and (\ref{hex2}), we get  
\begin{align}
	\frac{\partial \phi}{\partial \theta_1} &= \frac{2-\alpha(\epsilon)}{3\sqrt{2}\sqrt{\alpha^2(\epsilon)-\alpha(\epsilon)+1}}, \label{hex3}\\
	\frac{\partial \phi}{\partial \theta_2} &= \frac{-1+2\alpha(\epsilon)}{3\sqrt{2}\sqrt{\alpha^2(\epsilon)-\alpha(\epsilon)+1}}.\label{hex4}
\end{align}
We put \[ x=\frac{2-t}{3\sqrt{2}\sqrt{t^2-t+1}},\;y=\frac{-1+2t}{3\sqrt{2}\sqrt{t^2-t+1}}, \]
Rotating $(x,y)$ by one-quarter turn gives \[ x'=\frac{1}{\sqrt{2}}(x+y),\;y'=\frac{1}{\sqrt{2}}(-x+y). \]
Then we have the following equation of ellipse: 
\[ \frac{(x')^2}{\left( \frac{1}{9} \right)}+\frac{(y')^2}{\left(\frac{1}{3}\right)}=1. \]
It holds that 
\[ (0,0)\notin \left\{\left(\frac{2-t}{3\sqrt{2}\sqrt{t^2-t+1}},\;\frac{-1+2t}{3\sqrt{2}\sqrt{t^2-t+1}}\right): t\in \mathbb{R}  \right\}. \]
Therefore $(0,0)$ is outside of $\mathcal{S}_\phi$. 
\item $(\pm 2\pi/3,\mp 2\pi/3)$ case : 
We consider $(2\pi/3,-2\pi/3)$ case. Set $\theta_1=2\pi/3+\eta_1(\epsilon)$, $\theta_2=-2\pi/3+\eta_2(\epsilon)$ with 
$\eta_j(\epsilon)\to 0$. We estimate 
\begin{align*} 
\sin \theta_j &\sim \frac{\sqrt{3}}{2}-\frac{1}{2}\eta_j(\epsilon) \\
|\sin 2\phi| &\sim \frac{2}{3}\sqrt{\eta_1^2(\epsilon)+\eta_2^2(\epsilon)-\eta_1(\epsilon)\eta_2(\epsilon)} 
\end{align*}
Inserting these estimations into Eqs.~(\ref{hex1}) and (\ref{hex2}), 
\begin{align}
	\frac{\partial \phi}{\partial \theta_1} &= \frac{2-\beta(\epsilon)}{6\sqrt{\beta^2(\epsilon)-\beta(\epsilon)+1}}, \label{hex3}\\
	\frac{\partial \phi}{\partial \theta_2} &= \frac{2\beta(\epsilon)-1}{6\sqrt{\alpha^2(\epsilon)-\alpha(\epsilon)+1}}.\label{hex4}
\end{align}
We put \[ x=\frac{2-t}{6\sqrt{t^2-t+1}},\;y=\frac{-1+2t}{6\sqrt{t^2-t+1}}, \]
and rotate it a quarter turn \[ x'=\frac{1}{\sqrt{2}}(x+y),\;y'=\frac{1}{\sqrt{2}}(-x+y). \]
Then we have the following equation of ellipse: 
\[ \frac{(x')^2}{\left( \frac{1}{18} \right)}+\frac{(y')^2}{\left(\frac{1}{6}\right)}=1. \]
It holds that 
\[ (0,0)\notin \left\{\left(\frac{2-t}{3\sqrt{2}\sqrt{t^2-t+1}},\;\frac{-1+2t}{3\sqrt{2}\sqrt{t^2-t+1}}\right): t\in \mathbb{R}  \right\}. \]
Therefore $(0,0)$ is outside of $\mathcal{S}_\phi$. 
\end{enumerate}

The Hessian matrix is 
\[ \mathrm{H}_\theta|_{\mathcal{S}_\phi}
	=\segawa{\frac{2}{9\sin 2\phi}}
        \begin{bmatrix} \cos\theta_1+\cos(\theta_1-\theta_2) & -\cos(\theta_1-\theta_2) \\ 
                        -\cos(\theta_1-\theta_2) & \cos(\theta_1-\theta_2)+\cos\theta_2 
        \end{bmatrix} 
\]
It is easily checked that for all $p\in \mathcal{S}_\phi$, $\mathrm{det}(\mathrm{H}_\theta(p))\neq 0$. 
\subsection{\segawa{Limit measures of the Grover walk on $\mathbb{Z}^d$ lattice}} 
\segawa{In this section, we discuss more explicit expression for the limit behaviors which is proper to the Grover walk on $\mathbb{Z}^d$. 
To do so, we introduce two limit measures for \segawa{localization} and linear spreading of the Grover walk on 
$\mathbb{Z}^d$. } 
\segawa{
\begin{definition} ~\cite{KLS}
\noindent
\begin{enumerate}
\item We define the time averaged limit measure $\overline{\mu}_\infty : \mathbb{Z}^d\to [0,1]$ as 
	\[ \overline{\mu}_\infty(\mathbf{x})=\lim_{T\to\infty}\frac{1}{T}\sum_{n=0}^{T-1}\mu_n(\mathbf{x}). \]
\item We define the weak limit measure $d\mu_w^{(d)}: \mathbb{R}^d \to \mathbb{R}^d$ as 
	\[ \lim_{n\to\infty}P(X_n/n<\mathbf{y})=\int_{\{\mathbf{x}\in\mathbb{R}^d:\mathbf{x}<\mathbf{y}\}} d\mu_w^{(d)}(\mathbf{x}). \]
Here for $\mathbf{x}=(x_1,\dots,x_d),\;\mathbf{y}=(y_1,\dots,y_d)\in \mathbb{R}^d$, $\mathbf{x}<\mathbf{y}$ denotes $x_1<y_1,\;\dots,x_d<y_d$.
We call $d\mu_w^{(d)}$ a weak limit measure. 
\end{enumerate}
\end{definition}
}
\segawa{We remark that $\overline{\mu}_\infty(\mathbf{x})>0$ implies localization and 
$d\mu_w^{(d)}\neq \delta_\mathbf{o}$ implies linear spreading. } 
\subsubsection{\segawa{Time averaged limit measure}}
We express for $\hat{\theta}_j\in \mathbb{R}^d$ by   
\[ \hat{\theta}_1={}^T[1,0,\dots,0], \; \hat{\theta}_2={}^T[0,1,\dots,0],\dots,\hat{\theta}_d={}^T[0,0,\dots,1]. \]
Since $|V^{(o)}|=|\{u\}|=1$, we denote $\Psi_n(\mathbf{x},u)=\Psi_n(\mathbf{x})$ for $\mathbf{x}\in \mathbb{Z}^d$ and $\widehat{\Psi}_n(\mathbf{k},u)=\widehat{\Psi}_n(\mathbf{k})$ for $\mathbf{k}\in \mathbb{R}^d$. 
\HS{Let $G_d$ be the $d$-dimensional Grover matrix $2/d\cdot J_d-I_d$, where $J_d$ is the $d$-dimensional all $1$ matrix. }
Equation (\ref{FT}) implies 
\[ \widetilde{\Psi}_n(\mathbf{k})=\widetilde{U}^{(o)}(\mathbf{k})^n \widetilde{\Psi}_0(\mathbf{k}). \]
Here $\widetilde{U}^{(o)}(\mathbf{k})=\widetilde{S}^{(o)}(\mathbf{k})G_d$, where 
for $\mathbf{k}=(k_1,k_2,\dots,k_d)$, $l\in\{\pm 1,\pm 2,\dots,\pm d\}$, 
$\widetilde{S}^{(o)}(\mathbf{k})\delta_{e_l}=e^{ik_l}\delta_{\bar{e}_{l}}$. 

As seen in the previous subsection, $\mathbb{Z}^d$ satisfies \ref{assumption}. 
By Theorem~\ref{loc_cycle}, we have the following corollary. 
\segawa{This corollary geometrically expresses the eigenspaces $\mathcal{C}_\pm$ that are proper to $\mathbb{Z}^d$. }
\begin{corollary}\label{z^dloc}
Let $\bs{\gamma}$ and $\bs{\tau}$ be defined in Eqs~(\ref{wawa}) and (\ref{vava}), respectively. 
For $\mathbf{x}\in \mathbb{Z}^d$ and $e,f\in D^{(o)}$, 
define a closed path $c_{\mathbf{x};e,f}$ on $\mathbb{Z}^d\times D^{(o)}$ by 
\[ \left((\mathbf{x},e),(\mathbf{x}+\hat{\theta}(e),f),(\mathbf{x}+\hat{\theta}(e)+\hat{\theta}(f),\bar{e}),(\mathbf{x}+\hat{\theta}(f),\bar{f})\right). \]
Then we have 
\begin{align}
\mathcal{C}_+ &= \sum_{\mathbf{x}\in \mathbb{Z}^d,\;e,f\in D^{(o)}} \mathbb{C}\bs{\gamma}(c_{\mathbf{x};e,f}),\\  
\mathcal{C}_- &= \sum_{\mathbf{x}\in \mathbb{Z}^d,\;e,f\in D^{(o)}} \mathbb{C}\bs{\tau}(c_{\mathbf{x};e,f}). 
\end{align}
\segawa{Moreover the time averaged limit measure, given an initial state $\Psi_0\in \mathcal{H}$, 
is expressed by }
\begin{equation}
\segawa{\overline{\mu}_\infty(j)
=\sum_{e\in D^{(o)}}\left|\left(\left(\Pi_{\mathcal{C}_+}+ \Pi_{\mathcal{C}_-}\right)\Psi_0\right)(j,e)\right|^2. }
\end{equation}
\end{corollary}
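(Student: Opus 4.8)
The plan is to prove the two eigenspace identities and the limit-measure formula separately, since the measure follows from the eigenspace descriptions together with the spectral decomposition already recorded in Eq.~(\ref{yume}). For the identity $\mathcal{C}_+=\sum_{\mathbf{x},e,f}\mathbb{C}\bs{\gamma}(c_{\mathbf{x};e,f})$, I would first note that $\bs{\gamma}$ from Eq.~(\ref{wawa}) is additive under concatenation of paths and satisfies $\bs{\gamma}(\bar p)=-\bs{\gamma}(p)$, so $\bs{\gamma}(c)$ depends only on the signed edge-multiplicities of $c$, i.e.\ it factors through the graph cycle space $Z_1(\mathbb{Z}^d;\mathbb{C})$. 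I would then invoke the purely topological fact that, viewing $\mathbb{Z}^d$ as the $1$-skeleton of the standard cubulation of the contractible space $\mathbb{R}^d$, every $1$-cycle is a finite combination of boundaries of unit squares (because $H_1(\mathbb{R}^d)=0$); each such square boundary is exactly some $\bs{\gamma}(c_{\mathbf{x};e,f})$ with $\hat\theta(e),\hat\theta(f)$ independent, while the degenerate choices of $(e,f)$ contribute $0$. Hence $\bs{\gamma}(c)$ for any $c\in C(\mathbb{Z}^d)$ lies in the claimed span.

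For $\mathcal{C}_-=\sum_{\mathbf{x},e,f}\mathbb{C}\bs{\tau}(c_{\mathbf{x};e,f})$ the difficulty is that $\bs{\tau}$ from Eq.~(\ref{vava}) carries the position-dependent sign $(-1)^j$, so it is not immediately a function of the cycle class. Here I would exploit that $\mathbb{Z}^d$ is bipartite: writing $\epsilon(\mathbf{v})=(-1)^{v_1+\cdots+v_d}$ for the two-colouring, one checks that $(-1)^j$ equals, up to the global sign fixed by the colour of $o(c)$, the intrinsic quantity $\epsilon(o(e_j))$, so that $\bs{\tau}(c)=\pm\sum_j\epsilon(o(e_j))(\delta_{e_j}+\delta_{\bar e_j})$. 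Setting $\sigma(a)=\epsilon(o(a))(\delta_a+\delta_{\bar a})$ one has $\sigma(\bar a)=-\sigma(a)$, so $c\mapsto\sum_j\sigma(e_j)$ is again additive and reversal-odd and thus factors through $Z_1(\mathbb{Z}^d;\mathbb{C})$. Decomposing $c$ into square boundaries as above, and noting that $\sum\sigma$ over a square equals $\pm\bs{\tau}(c_{\mathbf{x};e,f})$, then places $\mathcal{C}_-$ in the plaquette span. Since $\mathbb{Z}^d$ is bipartite, every closed path has even length, so $CP_e(\mathbb{Z}^d)$ is the full set of closed paths and no cycles are lost.

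For the limit measure I would start from Eq.~(\ref{yume}), in which for $\mathbb{Z}^d$ the flat-band set is empty, $\Lambda=\emptyset$, because $\cos\phi=\tfrac1d\sum_j\cos\theta_j$ is non-constant. Applying $\mathcal{F}$ gives $\Psi_n(\mathbf{x},e)=R_n(\mathbf{x},e)+(\Pi_{\mathcal{C}_+}\Psi_0)(\mathbf{x},e)+(-1)^n(\Pi_{\mathcal{C}_-}\Psi_0)(\mathbf{x},e)$, where $R_n$ is the inverse transform of the $\sum_\phi e^{in\phi(\mathbf{k})}\Pi_{\omega_\phi}\widetilde\Psi_0$ term and where I have used Proposition~\ref{fouriecycle} to identify the inverse transform of $\Pi_{\tilde{\mathcal{M}}^{(o)}_\pm}\widetilde\Psi_0$ with $\Pi_{\mathcal{C}_\pm}\Psi_0$. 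Forming $\mu_n(\mathbf{x})=\sum_e|\Psi_n(\mathbf{x},e)|^2$ and taking the time average, the oscillatory $R_n$-contributions and the $R_n$-versus-constant cross terms vanish by the stationary-phase estimate of Lemma~\ref{stationary} (as in Proposition~\ref{weaklem}), while the cross term between the $+1$ and $-1$ components vanishes because $\tfrac1T\sum_{n<T}(-1)^n\to0$. What survives is $\sum_e\big(|(\Pi_{\mathcal{C}_+}\Psi_0)(\mathbf{x},e)|^2+|(\Pi_{\mathcal{C}_-}\Psi_0)(\mathbf{x},e)|^2\big)$, i.e.\ the incoherent sum of the two eigencomponents displayed in the statement, consistent with the term $\|(\Pi_{\mathcal{C}_+}+\Pi_{\mathcal{C}_-})\Psi_0\|^2$ appearing in $c_0$ of Proposition~\ref{weaklem}.

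The main obstacle is the spectral bookkeeping in this last step. One must verify that the continuous part of $U(\mathbb{Z}^d)$ carries no atom, so that it contributes nothing to the time average; this rests on the analyticity and non-degeneracy of $\phi$ already established for $\mathbb{Z}^d$ in the previous subsection, which guarantee that the push-forward of Lebesgue measure under $\mathbf{k}\mapsto e^{i\phi(\mathbf{k})}$ is absolutely continuous away from finitely many van~Hove points. One must also keep the $+1$ and $-1$ eigenspaces strictly separate, so that their interference is annihilated by the alternating sign rather than surviving as a coherent cross term; this alternating-sign cancellation is precisely what forces the answer to be the incoherent sum of the two projection magnitudes. The homological reduction for $\bs{\tau}$, namely the bipartite-colouring rewriting, is the other point requiring care, since unlike $\bs{\gamma}$ it is not literally a cycle-space invariant until the position-dependent sign is recast as the vertex colour.
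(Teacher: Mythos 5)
Your argument is correct in substance, and it takes a genuinely different route from the paper. The paper offers no separate proof of this corollary: the plaquette description of $\mathcal{C}_\pm$ is obtained by specializing Theorem~\ref{cycles} and Proposition~\ref{fouriecycle}, whose proof constructs explicit Fourier-space eigenfunctions $\tilde{\eta}_j,\tilde{\zeta}_j$ attached to pairs of fundamental cycles joined by a tree path; for the $d$-bouquet quotient (one vertex, trivial spanning tree, loops $e_1,\dots,e_d$) these become precisely the commutator paths $c_{\mathbf{x};e,f}$, and the measure formula is the computation inside Theorem~\ref{loc_cycle} and Proposition~\ref{weaklem}. You instead work directly in real space: $\bs{\gamma}$ factors through the cycle space, $H_1$ of the cubulated $\mathbb{R}^d$ vanishes, so every cycle is an integer combination of unit-square boundaries, and the degenerate pairs $(e,f)$ with $f\in\{e,\bar{e}\}$ contribute $0$. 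Your bipartite recasting of $\bs{\tau}$ via $\sigma(a)=\epsilon(o(a))(\delta_a+\delta_{\bar a})$, with $\sigma(\bar a)=-\sigma(a)$ so that $\bs{\tau}(c)=-\epsilon(o(c))\sum_j\sigma(e_j)$ becomes a cycle-space invariant, is exactly the point requiring care and you handle it correctly; it replaces the paper's appeal to the $CP_e$ part of Theorem~\ref{cycles}. Your route buys a self-contained, transparent explanation of why plaquettes suffice on $\mathbb{Z}^d$; the paper's machinery has the advantage of applying to arbitrary quotient graphs. Your identification $\Lambda=\emptyset$ and the treatment of the absolutely continuous part via Lemma~\ref{stationary} also match what the paper establishes for $\mathbb{Z}^d$ in Sect.~4.1.

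One point in your last step should be stated as a correction rather than as agreement. Your time average yields the incoherent sum $\sum_{e}\bigl(|(\Pi_{\mathcal{C}_+}\Psi_0)(j,e)|^2+|(\Pi_{\mathcal{C}_-}\Psi_0)(j,e)|^2\bigr)$, the $(-1)^n$ having killed the interference, whereas the corollary displays the coherent expression $\sum_e|((\Pi_{\mathcal{C}_+}+\Pi_{\mathcal{C}_-})\Psi_0)(j,e)|^2$; these are not equal in general. For a single plaquette $c$, take $\Psi_0\propto\bs{\gamma}(c)+\bs{\tau}(c)$ (the two vectors are orthogonal): on the arcs based at $o(c)$ the two components are pointwise antiproportional, so the coherent form gives $0$ there, while $\mu_n(o(c))$ alternates between $0$ and a positive value with Ces\`aro average equal to your incoherent sum. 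So your derivation is the correct one, and you should say plainly that the displayed formula must be read as the incoherent sum; your remark that the result is ``consistent with the statement'' glosses over this. Only the global identity $\|(\Pi_{\mathcal{C}_+}+\Pi_{\mathcal{C}_-})\Psi_0\|^2=\|\Pi_{\mathcal{C}_+}\Psi_0\|^2+\|\Pi_{\mathcal{C}_-}\Psi_0\|^2$, used in $c_0$ of Proposition~\ref{weaklem}, survives by orthogonality of the eigenspaces; its pointwise analogue does not.
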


\subsubsection{\segawa{Weak limit measure}}
In this subsection, we simplify the problem by adopting a mixed state as the initial state; that is, 
\[ P[\Psi_0=\bs{\delta_{e}}]= \frac{\bs{1}_{\{o(e)=0\}}(e)}{2d}.  \]
We now consider the weak limit theorems by taking the expectation with respect to the initial state. 
In the mixed state, Proposition~\ref{weaklem} reduces to the following corollary. 
\begin{corollary}
For a mixed initial state, we have 
\begin{equation}\label{G-lemma2}
\lim_{n\to \infty} E[e^{i\langle\bs{\xi}, X_n\rangle/n}]
	=\left(1-\frac{1}{d}\right)
        	+\frac{1}{d}\int_{\mathbf{k}\in [0,2\pi)^d} \cos [\langle\bs{\xi},\nabla \phi\rangle] \frac{d\mathbf{k}}{(2\pi)^d}.
\end{equation}
\end{corollary}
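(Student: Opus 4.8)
The plan is to specialize the general weak-limit formula of Proposition~\ref{weaklem} to the $d$-dimensional square lattice and then average the resulting expression over the mixed initial state. First I would record the features proper to $\mathbb{Z}^d$: the quotient graph is the $d$-bouquet, so $|V^{(o)}|=1$ and the discriminant $\widetilde{T}^{(o)}_{\mathbf{k}}$ is the scalar $\cos\phi=\tfrac{1}{d}\sum_{j=1}^d\cos k_j$. Hence $\mathrm{spec}(\widetilde{T}^{(o)}_{\mathbf{k}})$ is a single nonconstant eigenvalue, $\Phi^{(o)}=\{\phi,-\phi\}$ consists of the two sign branches, and $\Lambda=\emptyset$. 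Consequently the constant term $c_0$ of Proposition~\ref{weaklem} collapses to $\|(\Pi_{\mathcal{C}_+}+\Pi_{\mathcal{C}_-})\Psi_0\|^2$ and the running term reduces to a sum over $\{\phi,-\phi\}$ only. Because the initial state is mixed, I would then write
\[ \lim_{n\to\infty}E[e^{i\langle\bs{\xi},X_n\rangle/n}]=\frac{1}{2d}\sum_{e:o(e)=0}\Big(\text{formula of Proposition~\ref{weaklem} with }\Psi_0=\delta_e\Big), \]
observing that the Fourier transform of the origin-based arc state $\delta_{(\mathbf{0},e)}$ is the constant fibre vector $\delta_e\in\ell^2(D^{(o)})$, so that $\widetilde{\Psi}_0(\mathbf{k})=\delta_e$ is independent of $\mathbf{k}$.

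For the running term I would use that the $2d$ arcs $\{\delta_e:o(e)=0\}$ form a complete orthonormal basis of the fibre $\ell^2(D^{(o)})$, so that for each branch
\[ \frac{1}{2d}\sum_{e:o(e)=0}\|\Pi_{\omega_{\pm\phi}}\delta_e\|^2=\frac{1}{2d}\,\mathrm{tr}\,\Pi_{\omega_{\pm\phi}}=\frac{1}{2d}, \]
since $|V^{(o)}|=1$ forces each inherited eigenspace to be one-dimensional. Combining the two branches and using $\nabla(-\phi)=-\nabla\phi$ turns $e^{i\langle\bs{\xi},\nabla\phi\rangle}+e^{i\langle\bs{\xi},\nabla(-\phi)\rangle}$ into $2\cos\langle\bs{\xi},\nabla\phi\rangle$, producing exactly $\tfrac{1}{d}\int_{[0,2\pi)^d}\cos[\langle\bs{\xi},\nabla\phi\rangle]\,d\mathbf{k}/(2\pi)^d$. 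For the constant $c_0$, I would invoke Parseval together with the fibrewise identification $\Pi_{\mathcal{C}_\pm}\cong\Pi_{\widetilde{\mathcal{M}}^{(o)}_\pm}$ from Proposition~\ref{fouriecycle}; since $\widetilde{\Psi}_0(\mathbf{k})=\delta_e$ is constant, this yields
\[ \frac{1}{2d}\sum_{e:o(e)=0}\|\Pi_{\mathcal{C}_\pm}\delta_{(\mathbf{0},e)}\|^2=\frac{1}{2d}\int_{[0,2\pi)^d}\mathrm{tr}\,\Pi_{\widetilde{\mathcal{M}}^{(o)}_\pm(\mathbf{k})}\,\frac{d\mathbf{k}}{(2\pi)^d}=\frac{d-1}{2d}, \]
using $\dim\widetilde{\mathcal{M}}^{(o)}_\pm(\mathbf{k})=|E^{(o)}|-|V^{(o)}|=d-1$ for a.e.\ $\mathbf{k}$ (Proposition~\ref{cristalspec} together with case (iv) of Lemma~\ref{multm}). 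Adding the $+$ and $-$ contributions gives $c_0=\tfrac{d-1}{d}=1-\tfrac{1}{d}$, and summing the two terms produces the claimed identity.

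The routine part is the dimension bookkeeping $1+1+(d-1)+(d-1)=2d$; the step needing the most care is justifying that the multiplicities used inside the traces are valid for almost every $\mathbf{k}$. Proposition~\ref{cristalspec} secures $\dim\widetilde{\mathcal{M}}^{(o)}_\pm=d-1$ only away from the exceptional set where $\langle\mathbf{k},\hat\theta(e)\rangle\in\pi\mathbb{Z}$ for all $e$, on which the multiplicities may jump. I would therefore argue that for $\mathbb{Z}^d$ this set equals $\{0,\pi\}^d$ in $[0,2\pi)^d$ and so has Lebesgue measure zero, whence it does not affect the integrals; the orthogonal decomposition of the fibre into the two inherited lines $\mathbb{C}\omega_{\pm\phi}$ and the two localization subspaces $\widetilde{\mathcal{M}}^{(o)}_\pm$ then holds almost everywhere and Parseval applies cleanly, completing the computation.
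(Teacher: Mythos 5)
Your proposal is correct and follows essentially the same route as the paper's proof: specialize Proposition~\ref{weaklem} to the $d$-bouquet, average over the $2d$ initial arc states to convert each squared projection norm into $\tfrac{1}{2d}\mathrm{Tr}$, and use $\dim\widetilde{\mathcal{M}}^{(o)}_\pm=d-1$ together with the one-dimensional inherited eigenspaces of $e^{\pm i\phi}$ to obtain $c_0=1-\tfrac1d$ and the cosine term. Your explicit verification that $\Lambda=\emptyset$ and that the exceptional set $\{0,\pi\}^d$ where the multiplicities of Proposition~\ref{cristalspec} may jump has Lebesgue measure zero is a detail the paper leaves implicit (its ``a.s.'' remark), but it does not change the argument.
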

\begin{proof}
The first term of RHS in Eq.~(\ref{G-lemma}) is rewritten as 
\begin{equation}\label{MI} 
|| (\Pi_{\mathcal{C}_+}+\Pi_{\mathcal{C}_-})\Psi_0 ||^2
	=\int_{0\;d}^{2\pi}|| (\Pi_{\widetilde{\mathcal{M}}_+^{(o)}}+\Pi_{\widetilde{\mathcal{M}}_-^{(o)}})\widetilde{\Psi}_0(\mathbf{k}) ||^2 \frac{d\mathbf{k}}{(2\pi)^d}. 
\end{equation}  
Since \segawa{$Prob(``\widetilde{\Psi}_0=\delta_{e_0}")=1/(2d)$}, the integrand of RHS in Eq.~(\ref{MI}) reduces to 
\begin{align*} 
|| (\Pi_{\widetilde{\mathcal{M}}_+^{(o)}}+\Pi_{\widetilde{\mathcal{M}}_-^{(o)}})\widetilde{\Psi}_0(\mathbf{k}) ||^2
	&= \frac{1}{2d}\mathrm{Tr}\left[ \Pi_{\widetilde{\mathcal{M}}_+^{(o)}}+\Pi_{\widetilde{\mathcal{M}}_-^{(o)}} \right] \\
        &= \frac{1}{2d} \left(\mathrm{dim}(\widetilde{\mathcal{M}}_+^{(o)})+\mathrm{dim}(\widetilde{\mathcal{M}}_-^{(o)})\right) \\
        &= \frac{d-1}{d} \;\;\;\;\;\; (a.s.)
\end{align*}
Moreover, since  
\sato{\begin{equation}\label{MII} 
|| \Pi_{w_\phi}\widetilde{\Psi}_0(\mathbf{k}) ||^2
	= \frac{1}{2d}\mathrm{Tr}[\Pi_{w_\phi}]=\frac{1}{2d}, 
\end{equation}  }
the integrand of the second term of RHS in Eq.~(\ref{G-lemma}) becomes 
\[ \sum_{\phi} e^{i\langle\bs{\xi},\nabla \phi\rangle}|| \Pi_{w_\phi}\widetilde{\Psi}_0(\mathbf{k}) ||^2  
	= \frac{1}{2d} \left(e^{i\langle\bs{\xi},\nabla \phi(\mathbf{k})\rangle}+e^{-i\langle\bs{\xi},\nabla \phi(\mathbf{k})\rangle}\right). \]
Thereby, we complete the proof. 
\end{proof}
For $\phi\in \Phi^{(o)}\smallsetminus \Lambda$ with $\cos\phi(\mathbf{k})=(\sum_{j=1}^d\cos k_j)/d$ and $\sin\phi(\mathbf{k})>0$, 
we define the Hessian matrix $\mathrm{Hess}_\phi=(\mathrm{H}_d)_{l,m}$ $(l,m\in{1,\dots,d})$. 
\begin{lemma}\label{detJ}
Taking $x_j=\partial \phi(\mathbf{k})/\partial k_j$ with $\cos\phi(\mathbf{k})=(\sum_{j=1}^d\cos k_j)/d$ and $\sin\phi(\mathbf{k})\geq 0$, 
we have 
\begin{equation}\label{detJ1}
\segawa{\mathrm{det}(\mathrm{H}_d)}
	=\prod_{l=1}^d\left(\frac{\cos k_l}{d\sin\phi(\mathbf{k})}\right)\left( 1-d\cos\phi(\mathbf{k}) \sum_{j=1}^d \frac{x_j^2}{\cos k_j}\right).
\end{equation}
\end{lemma}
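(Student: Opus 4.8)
The plan is to expose $\mathrm{H}_d$ as a diagonal matrix plus a rank-one correction, and then read off the determinant from the matrix determinant lemma. First I would recover the gradient by differentiating the defining relation $\cos\phi(\mathbf{k})=\frac{1}{d}\sum_{j=1}^d\cos k_j$ implicitly. Differentiating in $k_l$ gives $-\sin\phi\cdot\partial_{k_l}\phi=-\frac{1}{d}\sin k_l$, hence
\[ x_l=\frac{\partial\phi}{\partial k_l}=\frac{\sin k_l}{d\sin\phi(\mathbf{k})}, \]
which I record in the equivalent form $\sin k_l=d\sin\phi\,x_l$. This identity is the device that lets every second derivative be rewritten purely in terms of $x_l$, $\cos k_l$ and $\phi$.

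Next I would differentiate once more, using $\partial_{k_m}(\sin\phi)=\cos\phi\,x_m$. A short computation yields, for $l\neq m$,
\[ (\mathrm{H}_d)_{l,m}=-\frac{\sin k_l\,\cos\phi\,x_m}{d\sin^2\phi}=-\frac{\cos\phi}{\sin\phi}\,x_l x_m, \]
where the second equality substitutes $\sin k_l=d\sin\phi\,x_l$, and for the diagonal entries
\[ (\mathrm{H}_d)_{l,l}=\frac{\cos k_l}{d\sin\phi}-\frac{\cos\phi}{\sin\phi}\,x_l^2. \]
Collecting these entries, the Hessian acquires the clean structure
\[ \mathrm{H}_d=D-\frac{\cos\phi}{\sin\phi}\,\mathbf{x}\mathbf{x}^{\top},\qquad D=\mathrm{diag}\!\left[\frac{\cos k_l}{d\sin\phi};\,l\in\{1,\dots,d\}\right], \]
with $\mathbf{x}={}^{T}[x_1,\dots,x_d]$. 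This decomposition is the heart of the argument; the only care required is the bookkeeping in the two implicit differentiations and the consistent use of the substitution $\sin k_l=d\sin\phi\,x_l$, so I expect identifying the rank-one form to be the main (and essentially the only) obstacle.

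Finally I would apply the matrix determinant lemma $\det(D-c\,\mathbf{x}\mathbf{x}^{\top})=\det(D)\bigl(1-c\,\mathbf{x}^{\top}D^{-1}\mathbf{x}\bigr)$ with $c=\cos\phi/\sin\phi$. Here $\det(D)=\prod_{l=1}^d\frac{\cos k_l}{d\sin\phi}$ and
\[ \mathbf{x}^{\top}D^{-1}\mathbf{x}=\sum_{j=1}^d\frac{d\sin\phi}{\cos k_j}\,x_j^2, \]
so that $1-c\,\mathbf{x}^{\top}D^{-1}\mathbf{x}=1-d\cos\phi\sum_{j=1}^d\frac{x_j^2}{\cos k_j}$, which reproduces Eq.~(\ref{detJ1}) exactly. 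No genuine difficulty remains once the rank-one structure is in place; the point to keep in mind is that the formula is meaningful precisely on the locus where $\sin\phi>0$ and $\cos k_j\neq 0$, i.e.\ where $D$ is invertible, which is exactly the regime in which the statement is subsequently used.
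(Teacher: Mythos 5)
Your proposal is correct and follows essentially the same route as the paper: both compute the gradient $x_l=\sin k_l/(d\sin\phi)$, rewrite the second derivatives to exhibit $\mathrm{H}_d$ as a diagonal matrix plus a rank-one term in $\mathbf{x}\mathbf{x}^{\top}$, and evaluate the determinant of the rank-one perturbation (the paper via $\det(I-AB)=\det(I-BA)$, you via the equivalent matrix determinant lemma). If anything, your normalization $\mathrm{H}_d=D-(\cos\phi/\sin\phi)\,\mathbf{x}\mathbf{x}^{\top}$ with $D=\mathrm{diag}[\cos k_l/(d\sin\phi)]$ handles the sign bookkeeping more transparently than the paper's factorization $\mathrm{H}_d=-(1/\tan\phi)(P-D)$, where a factor of $(-1)^d$ is silently absorbed between the two displayed lines.
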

\begin{proof}
Since $\cos\phi(\mathbf{k})=1/d\cdot \sum_{j=1}^d \cos k_j$, we have 
\begin{equation}\label{critical}
\frac{\partial \phi(\mathbf{k})}{\partial k_m} = \frac{1}{d}\frac{\sin k_m}{\sin \phi(\mathbf{k})}. 
\end{equation}
Moreover remarking 
\begin{equation*}
x_m=\frac{\partial}{\partial k_m}\phi(\mathbf{k})=\frac{1}{d} \frac{\sin k_m}{\sin \phi(\mathbf{k})},
\end{equation*}
we obtain 
\begin{equation*} 
\frac{\partial}{\partial k_l}\frac{\partial}{\partial k_m}\phi(\mathbf{k})
	= -\frac{1}{\tan \phi(\mathbf{k})}
        \left( x_lx_m-\delta_{l,m} \frac{\cos k_l}{d\cos \phi(\mathbf{k})} \right). 
\end{equation*}
Thus $\mathrm{H}_d$ is expressed as follows: 
\begin{equation*}
\mathrm{H}_d = - \frac{1}{\tan\phi(\mathbf{k})}(P-D), 
\end{equation*}
where $(P)_{l,m}=x_lx_m$, and $D=\mathrm{diag}[\cos k_j/(d\cos\phi(\mathbf{k})); 1\leq j\leq d]$. 
The determinant of $\mathrm{H}_d$ is computed as 
\begin{align}
\mathrm{det}(\mathrm{H}_d) &= \left(-\frac{1}{\tan\phi(\mathbf{k})}\right)^d\mathrm{det}(P-D)  \notag \\ 
	&= \left(-\frac{1}{\tan\phi(\mathbf{k})}\right)^d\mathrm{det}(D)\mathrm{det}(I-D^{-1}P). \label{JJ}
\end{align}
We remark that 
\begin{align}
	\mathrm{det}(D) &= \prod_{j=1}^d \frac{\cos k_j}{d\cos\phi(\mathbf{k})} \label{DD}\\
        \mathrm{det}(I-D^{-1}P) &= 1-\mathrm{Tr}[D^{-1}P] \notag \\ 
        			&= 1-\sum_{j=1}^d \frac{d\cos\phi(\mathbf{k})}{\cos k_j}x_j^2 \label{PP}
\end{align}
Here we used the fact that $\mathrm{det}(I_n-AB)=\mathrm{det}(I_m-BA)$, where $A$ and $B$ are $n\times m$ and $m\times n$ matrices, respectively. 
Inserting Eqs.~(\ref{DD}) and (\ref{PP}) into Eq.~(\ref{JJ}) completes the proof.
\end{proof}

We define the density function of the weak convergence of QW with respect to RHS of the second term in Eq.~(\ref{G-lemma}) by $\rho_d(\mathbf{x})$. 
\segawa{From Eq.~(\ref{G-lemma2}), the weak limit measure is 
	\[d\mu_w^{(d)}(\mathbf{x})=(1-1/d)\delta_\mathbf{0}(\mathbf{x})+\rho_d(\mathbf{x}), \] 
where $\int_{-\infty\;d}^\infty\; \rho_d(\mathbf{x})d\mathbf{x}=1/d$.  }
Replacing $\partial \phi(\mathbf{k})/\partial k_j$ with $x_j$ in the integral of RHS in Eq.~(\ref{G-lemma}), 
we can find the shape of \segawa{the continuous part of} the limit density function $\rho_d$ from the Hessian matrix $\mathrm{H}_d$; that is, for $\mathbf{x}=(x_1,\dots,x_d)$, 
\begin{equation}\label{sanriku} 
\rho_d(\mathbf{x})\propto \frac{1}{(2\pi)^d|\mathrm{det}(\mathrm{H}_d)|}. 
\end{equation} 
\begin{theorem}\label{limdis2}
When $d=2$, 
\begin{equation}\label{lim2}
\rho_2(x,y)=\frac{2\times \bs{1}_{\{x^2+y^2\leq 1/2\}}(\mathbf{x})}{\pi^2 (x+y-1)(x+y+1)(x-y+1)(x-y-1)}. 
\end{equation}
\end{theorem}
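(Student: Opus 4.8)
The plan is to obtain $\rho_2$ as the density of the push-forward of the uniform measure on the torus $[0,2\pi)^2$ under the gradient map $\mathbf{k}\mapsto\mathbf{x}=\nabla\phi(\mathbf{k})$, which is exactly what Eq.~(\ref{sanriku}) records: at a regular value $\mathbf{x}$ one has $\rho_2(\mathbf{x})=\sum_{\mathbf{k}:\nabla\phi(\mathbf{k})=\mathbf{x}}1/\big((2\pi)^2|\det\mathrm{H}_2(\mathbf{k})|\big)$. First I would specialize Lemma~\ref{detJ} to $d=2$, which gives
\[
\det\mathrm{H}_2=\frac{1}{4\sin^2\phi}\Big(\cos k_1\cos k_2-2\cos\phi\,(x^2\cos k_2+y^2\cos k_1)\Big),
\]
together with $\cos\phi=(\cos k_1+\cos k_2)/2$ and, from Eq.~(\ref{critical}), $\sin k_1=2x\sin\phi$, $\sin k_2=2y\sin\phi$. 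The whole problem then reduces to rewriting the right-hand side as a function of $(x,y)$ alone.

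The core of the argument is the elimination of $k_1,k_2,\phi$. Writing $c=\cos\phi$, $s^2=\sin^2\phi$, $a=\cos k_1$, $b=\cos k_2$, the identities $a+b=2c$ and $a^2-b^2=4s^2(y^2-x^2)$ give $a-b=2s^2(y^2-x^2)/c$; inserting these into $a^2+b^2=2-4s^2(x^2+y^2)$ leaves a single linear equation for $s^2$, whose solution is the clean pair
\[
\sin^2\phi=\frac{1-2(x^2+y^2)}{M},\qquad \cos^2\phi=\frac{(x^2-y^2)^2}{M},
\]
with $M=(x+y-1)(x+y+1)(x-y+1)(x-y-1)=\big((x+y)^2-1\big)\big((x-y)^2-1\big)$. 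Substituting $a,b,s^2$ back into the displayed determinant and using $N+q=M$ with $N=1-2(x^2+y^2)$ and $q=(x^2-y^2)^2$, the numerator $\cos k_1\cos k_2-2\cos\phi(x^2\cos k_2+y^2\cos k_1)$ collapses to $-N$, so that
\[
\det\mathrm{H}_2=\frac{-N}{4s^2}=-\frac{M}{4}.
\]
I expect this cancellation to be the main obstacle: it is where the quartic denominator of the theorem is born, and it is the one place where a sign or algebra slip would pass unnoticed, so I would double-check it by evaluating both sides at a convenient interior point.

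Finally I would settle the support and the constant. Requiring $k_1,k_2$ (equivalently $s^2$) to be real forces $\sin^2\phi\ge0$; since $M>0$ on the region that matters, this is equivalent to $1-2(x^2+y^2)\ge0$, i.e.\ $\mathbf{x}\in\{x^2+y^2\le1/2\}$, reproducing the indicator (for $x^2+y^2>1/2$ one checks that $N$ and $M$ have opposite signs, so there is no real preimage and the density vanishes). For the coefficient, the involution $(k_1,k_2)\mapsto(\pi-k_1,\pi-k_2)$ fixes $(x,y)=\nabla\phi$, and one verifies that a regular value has exactly these two preimages; hence the sum in Eq.~(\ref{sanriku}) has two equal terms and
\[
\rho_2(x,y)=\frac{2}{(2\pi)^2\,|\det\mathrm{H}_2|}\,\bs{1}_{\{x^2+y^2\le1/2\}}=\frac{2}{\pi^2 M}\,\bs{1}_{\{x^2+y^2\le1/2\}},
\]
which is the claimed expression; as a consistency check, $M$ vanishes exactly at the four points $(\pm1/2,\pm1/2)$ on the circle $x^2+y^2=1/2$, matching the $2^d$ singular points of Theorem~\ref{singular}.
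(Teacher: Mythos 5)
Your proposal is correct and takes essentially the same route as the paper's proof: both invert the system $x=\sin k_1/(2\sin\phi)$, $y=\sin k_2/(2\sin\phi)$ with $\cos\phi=(\cos k_1+\cos k_2)/2$ and insert the result into Lemma~\ref{detJ} through Eq.~(\ref{sanriku}), and your elimination via $\sin^2\phi=\left(1-2(x^2+y^2)\right)/M$, $\cos^2\phi=(x^2-y^2)^2/M$ is algebraically equivalent to the paper's closed forms (\ref{cosk})--(\ref{cosl}), since your $\cos k_1=\cos\phi+\sin^2\phi\,(y^2-x^2)/\cos\phi$ reproduces $(1-3x^2-y^2)/\sqrt{M}$. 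If anything, you make explicit what the paper's one-line conclusion suppresses --- the closed value $\det(\mathrm{H}_2)=-M/4$, the exactly two preimages exchanged by $(k_1,k_2)\mapsto(\pi-k_1,\pi-k_2)$ (the origin of the factor $2$), and the support condition --- with only one cosmetic slip: outside the disk, when $M<0$ your quantities $N$ and $M$ share a sign and $\sin^2\phi>0$, and the nonexistence of real preimages there follows instead from $\cos^2\phi=(x^2-y^2)^2/M<0$, so your conclusion stands unchanged.
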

\begin{proof}
Explicit solutions $(k,l)\in K^2$ to 
\[x=\frac{\sin k}{2\sin\phi(k,l)}, \;\;y=\frac{\sin l}{2\sin\phi(k,l)}, \]
can be obtained as the function of $x$ and $y$: 
\begin{align}
\cos k &= \frac{1-3x^2-y^2}{\sqrt{(x+y-1)(x+y+1)(x-y-1)(x-y+1)}}, \label{cosk}\\
\cos l &= -\frac{1-3y^2-x^2}{\sqrt{(x+y-1)(x+y+1)(x-y-1)(x-y+1)}}. \label{cosl}
\end{align}
The proof is completed by inserting Eqs.~(\ref{cosk}) and (\ref{cosl}) into Lemma~\ref{detJ}. 
\end{proof}
\segawa{When $d\geq 3$, the following system of equations in terms of $k_1,k_2,\dots,k_d$ is generally difficult to solve directly}: 
\begin{align*} 
x_1 = \frac{\sin k_1}{d\sin\phi(\mathbf{k})},\; 
x_2 = \frac{\sin k_2}{d\sin\phi(\mathbf{k})},\; 
     \cdots,\; 
x_d =  \frac{\sin k_d}{d\sin\phi(\mathbf{k})}.  
\end{align*}
So it is hard to obtain a closed expression $\rho_d$ as a function of $\mathbf{x}\in\mathbb{R}^d$ at this stage. 
However, we can partially obtain the shape of $\rho_d$ as shown below. 
\begin{theorem}\label{ball}
Let the support of $\rho_d$ be $S_d\subset \mathbb{R}^d$. Then we have 
\begin{equation*}
S_d\subseteq B_d, 
\end{equation*}
where $B_d$ is the $d$-dimensional sphere of radius $1/\sqrt{d}$. 
\end{theorem}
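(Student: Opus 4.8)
The plan is to recognize that the support $S_d$ is precisely the closure of the image of the gradient map $\mathbf{k}\mapsto \nabla\phi(\mathbf{k})$, and then to bound the Euclidean norm of that gradient by Cauchy--Schwarz. Recall from Eq.~(\ref{sanriku}) that $\rho_d$ arises from the change of variables $\mathbf{x}=\nabla\phi(\mathbf{k})$ with Jacobian $\mathrm{H}_d$, so that $\rho_d(\mathbf{x})=0$ whenever $\mathbf{x}$ is not attained as $\nabla\phi(\mathbf{k})$ for some $\mathbf{k}\in[0,2\pi)^d$. Hence it suffices to show $\|\nabla\phi(\mathbf{k})\|^2\le 1/d$ for every $\mathbf{k}$ at which the gradient is defined.

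First I would insert the explicit components from Eq.~(\ref{critical}), namely $x_j=\partial\phi/\partial k_j=\frac{1}{d}\,\frac{\sin k_j}{\sin\phi(\mathbf{k})}$, together with $\cos\phi(\mathbf{k})=\frac{1}{d}\sum_{j=1}^d\cos k_j$, to compute
\begin{equation*}
\|\mathbf{x}\|^2=\sum_{j=1}^d x_j^2=\frac{\sum_{j=1}^d\sin^2 k_j}{d^2\sin^2\phi(\mathbf{k})}=\frac{\sum_{j=1}^d\sin^2 k_j}{d^2-\left(\sum_{j=1}^d\cos k_j\right)^2},
\end{equation*}
where I used $\sin^2\phi=1-\cos^2\phi$. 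Writing $c_j=\cos k_j$ and $\sum_j\sin^2 k_j=d-\sum_j c_j^2$, the desired bound $\|\mathbf{x}\|^2\le 1/d$ is then equivalent, after clearing the (positive) denominator, to
\begin{equation*}
\left(\sum_{j=1}^d c_j\right)^2\le d\sum_{j=1}^d c_j^2.
\end{equation*}
This is exactly the Cauchy--Schwarz inequality applied to the vectors $(c_1,\dots,c_d)$ and $(1,\dots,1)$, so it holds for all $\mathbf{k}$, with equality precisely when all $\cos k_j$ coincide. This establishes $S_d\subseteq B_d$ on the regular locus.

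The only subtlety, and the part I would treat with some care, is the degenerate locus $\sin\phi(\mathbf{k})=0$, i.e.\ $\cos\phi=\pm1$, which forces the denominator above to vanish and makes each $x_j$ a $0/0$ expression; these are exactly the points $\mathbf{k}\in\{(0,\dots,0),(\pi,\dots,\pi)\}$ excluded in the $\mathbb{Z}^d$ discussion of Sect.~4.1. I would dispatch them by the same limiting computation used there: approaching such a point along $\mathbf{k}=\mathbf{k}(\epsilon)$ and using the quadratic expansions $\cos k_j\sim 1-\tfrac12 k_j^2$ and $\sin\phi\sim\sqrt{K_2(\epsilon)/d}$, one finds $x_j\to t_j$ with $(t_1,\dots,t_d)\in\partial B_d$, so these limiting values sit exactly on the boundary sphere and remain inside $B_d$. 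Since the gradient map is otherwise smooth and the Cauchy--Schwarz bound is saturated only where all cosines agree (which maps onto $\partial B_d$), passing to closures yields $S_d\subseteq B_d$ as claimed. The substance of the argument is a one-line inequality; the main bookkeeping is identifying the support with the gradient image and verifying that the degenerate points land on $\partial B_d$ rather than outside it.
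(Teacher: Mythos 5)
Your proof is correct and follows essentially the same route as the paper's: the identity $\sum_j x_j^2=\bigl(\sum_j(1-\cos^2 k_j)\bigr)/\bigl(d^2-(\sum_j\cos k_j)^2\bigr)$ followed by the Cauchy--Schwarz inequality $(\sum_j\cos k_j)^2\le d\sum_j\cos^2 k_j$, with equality exactly when all $\cos k_j$ coincide. Your extra care at the degenerate points $(0,\dots,0)$ and $(\pi,\dots,\pi)$ merely makes explicit what the paper already establishes in Sect.~4.1, where those points are shown to yield limiting gradient values on $\partial B_d$.
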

\begin{proof}
\begin{align}
\sum_{j=1}^d x_j^2 &= \sum_{j=1}^d \left(\frac{1}{d} \frac{\sin k_j}{\sin \phi(\mathbf{k})}\right)^2
	            =\frac{\sum_{j=1}^d(1-\cos^2 k_j)}{d^2-\left(\sum_{j=1}^d\cos k_j\right)^2} \label{k1} 
\end{align}
From the Cauchy-Schwartz inequality, we have 
\begin{equation}\label{k2}
\left(\sum_{j=1}^d\cos k_j\right)^2 \leq d\sum_{j=1}^d\cos^2 k_j.
\end{equation}
Combining Eq.~(\ref{k1}) with Eq.~(\ref{k2}), we obtain 
\[ \sum_{j=1}^d x_j^2 \leq  1/d. \]
In particular, $\sum_{j=1}^d x_j^2 =  1/d$ if and only if $c=\cos k_j$ for all $j\in\{1,\dots,d\}$. 
\end{proof}
\noindent 
We now investigate what happens on the boundary of sphere $B_d$, defined by $\partial B_d$. 
\begin{theorem}\label{singular}
Define $V_{d}$ as the set of vertices of the $d$-dimensional cube inscribed in $B_d$, 
i.e., $V_{d}=\{\mathbf{x}=(x_1,\dots,x_d)\in \mathbb{Z}^d: |x_j|=1/d\}$. 
Then  
\begin{equation*}
\rho_d(\mathbf{x})= 
\begin{cases}
	\infty & \text{: $\mathbf{x}\in V_{d}$,} \\
        0 & \text{: $\mathbf{x}\in \partial B_d\smallsetminus V_{d}$ with $d\geq 3$,} \\
        \frac{4}{\pi^2 \cos^2 2\gamma} & \text{: $\mathbf{x}\in \partial B_d\smallsetminus V_{d}$ with $d=2$,}
\end{cases}
\end{equation*}
where we put $x=1/\sqrt{2}\cdot\cos\gamma$ and $y=1/\sqrt{2}\cdot\sin\gamma$ for $d=2$ case. 
\end{theorem}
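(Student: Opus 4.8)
The plan is to read the three regimes directly off the determinant in Lemma~\ref{detJ}, after rewriting it so that the degeneracy on $\partial B_d$ is isolated in one scalar factor. Starting from Eq.~(\ref{sanriku}), $\rho_d(\mathbf{x})\propto\sum 1/|\det(\mathrm{H}_d)|$ over the preimages $\mathbf{k}$ of $\mathbf{x}$ under $\nabla\phi$, I would use $x_j=\tfrac1d\sin k_j/\sin\phi$ and $\sum_j\cos k_j=d\cos\phi$ to simplify the second factor in Eq.~(\ref{detJ1}). Since $\sum_j\frac{x_j^2}{\cos k_j}=\frac{1}{d^2\sin^2\phi}\big(\sum_j\sec k_j-d\cos\phi\big)$, one gets $1-d\cos\phi\sum_j\frac{x_j^2}{\cos k_j}=g/\sin^2\phi$ with $g:=1-\frac{\cos\phi}{d}\sum_{j}\sec k_j$, so that
\[\det(\mathrm{H}_d)=\frac{\big(\prod_{l}\cos k_l\big)\,g}{d^{d}\sin^{d+2}\phi},\qquad \frac{1}{|\det(\mathrm{H}_d)|}=\frac{d^{d}\sin^{d+2}\phi}{|\prod_{l}\cos k_l|\,|g|}.\]
On $\partial B_d$, Theorem~\ref{ball} forces $\cos k_1=\cdots=\cos k_d=:c$, whence $\cos\phi=c$ and $g=1-\frac{c}{d}\cdot\frac{d}{c}=0$; thus $\partial B_d$ is exactly the locus where the Jacobian factor $g$ vanishes.

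Next I would locate the preimages of the boundary. Because $\cos k_j=\cos\phi$ makes $\sin k_j=\pm\sin\phi$ and hence $x_j=\pm1/d$, honest interior preimages with $\sum_j x_j^2=1/d$ land only on the $2^d$ vertices $V_d$; every other boundary point is reached solely as a corner limit $\mathbf{k}\to(0,\dots,0)$ or $(\pi,\dots,\pi)$, where $\sin\phi\to0$ and $x_j=\tfrac1d\sin k_j/\sin\phi$ is an indeterminate $0/0$ ratio fixing the direction. I therefore set $k_j=\epsilon u_j$ with $\|\mathbf{u}\|=1$ and $\epsilon\to0^+$, so that $\mathbf{x}=\nabla\phi(\mathbf{k})\to\tfrac1{\sqrt d}\mathbf{u}\in\partial B_d$. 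A Taylor expansion then gives $\sin\phi\sim\epsilon/\sqrt d$ and, after the $O(\epsilon^2)$ terms cancel, $g\sim-\frac{\epsilon^4}{4d}\big(S_4-\tfrac1d\big)$ with $S_4:=\sum_j u_j^4$, yielding the key asymptotic
\[\frac{1}{|\det(\mathrm{H}_d)|}\sim\frac{4\,d^{d/2}\,\epsilon^{\,d-2}}{|\,S_4-1/d\,|}\qquad(\epsilon\to0^+).\]

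From this the three cases follow. By the power-mean inequality $S_4\ge1/d$ with equality iff every $u_j^2=1/d$, i.e.\ iff the limiting point is a vertex. Hence for a non-vertex point of $\partial B_d$ the denominator stays bounded away from $0$, so the factor $\epsilon^{\,d-2}$ drives $\rho_d\to0$ when $d\ge3$, while for $d=2$ the exponent is $\epsilon^{0}$ and the limit is finite and nonzero; in that case I would instead substitute $x=\tfrac1{\sqrt2}\cos\gamma,\ y=\tfrac1{\sqrt2}\sin\gamma$ into the closed form of Theorem~\ref{limdis2}, where the quartic denominator collapses to $\tfrac14\cos^2 2\gamma$ and produces exactly the value $\frac{4}{\pi^2\cos^2 2\gamma}$ of the statement. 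For the vertices I would avoid the $0/0$ of the asymptotic and argue structurally: for $\sigma\in\{\pm1\}^d$ the curve $k_j=\sigma_j k$, $k\in(0,\pi)$, satisfies $\cos\phi=\cos k=\cos k_j$ and is collapsed by $\nabla\phi$ to the single vertex $(\sigma_1/d,\dots,\sigma_d/d)$; along it $g\equiv0$, hence $\det(\mathrm{H}_d)\equiv0$ while the prefactor $d^{d}\sin^{d+2}k/|\cos k|^{d}$ stays positive, so approaching the vertex through nearby regular values forces $\rho_d\to\infty$.

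The main obstacle is the corner asymptotic: establishing that the $\epsilon^2$ contributions to $g$ cancel and that its leading behaviour is precisely $-\frac{\epsilon^4}{4d}(S_4-1/d)$ needs the fourth-order Taylor coefficients of both $\cos$ and $\sec$ (using $\sec\theta=1+\tfrac{\theta^2}{2}+\tfrac{5\theta^4}{24}+\cdots$) together with the cancellation coming from $\sum_j\cos k_j=d\cos\phi$. A secondary care point is bookkeeping the two corners $(0,\dots,0)$ and $(\pi,\dots,\pi)$ and the $\pm\phi$ branches, which by the symmetries $\mathbf{k}\mapsto-\mathbf{k}$ and $k_j\mapsto\pi-k_j$ contribute identical leading terms, and relating the corner-proximity parameter $\epsilon$ to the distance of $\mathbf{x}$ from $\partial B_d$ so that the one-sided limit $\mathbf{x}\to\partial B_d$ is genuinely governed by $\epsilon\to0^+$.
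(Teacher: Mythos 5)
Your main computation is, in substance, the paper's own proof in cleaner notation. Your factorization
\[
\det(\mathrm{H}_d)=\frac{\bigl(\prod_{l}\cos k_l\bigr)\,g}{d^{\,d}\sin^{d+2}\phi},
\qquad g=1-\frac{\cos\phi}{d}\sum_{j}\sec k_j,
\]
is an exact rewriting of Lemma~\ref{detJ}; your corner expansion $g\sim-\frac{\epsilon^4}{4d}\bigl(S_4-\frac1d\bigr)$ reproduces, upon setting $K_2(\epsilon)=\epsilon^2$ and $K_4(\epsilon)=\epsilon^4 S_4$, precisely the paper's Eq.~(\ref{nichiyou}); and your power-mean dichotomy $S_4\geq 1/d$ with equality exactly in the vertex directions is the paper's Cauchy--Schwarz step verbatim. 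Your structural treatment of the vertices via the collapsed curves $k_j=\sigma_j k$ is equivalent to the paper's case $|c|<1$, where $\cos k_j=c$ forces $\mathbf{x}\in V_d$ and $\det(\mathrm{H}_d)=0$. So the conclusions $\rho_d=\infty$ on $V_d$ and $\rho_d=0$ on $\partial B_d\smallsetminus V_d$ for $d\geq 3$ are soundly established, since neither requires tracking multiplicative constants.

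The genuine gap is the $d=2$ constant. You assert that substituting $x=\cos\gamma/\sqrt2$, $y=\sin\gamma/\sqrt2$ into Eq.~(\ref{lim2}) ``produces exactly'' $4/(\pi^2\cos^2 2\gamma)$. It does not: the quartic denominator does collapse to $\frac14\cos^2 2\gamma$, as you say, but with the prefactor $2/\pi^2$ of Eq.~(\ref{lim2}) this yields $8/(\pi^2\cos^2 2\gamma)$, twice the stated value. This factor of $2$ cannot be waved away, because Eq.~(\ref{sanriku}) is only a proportionality; to pin the constant one must count the preimages of $\mathbf{x}$ under $\nabla\phi$. The map $\mathbf{k}\mapsto(\pi,\dots,\pi)-\mathbf{k}$ preserves each $\sin k_j$ and $\sin\phi$ while flipping $\cos\phi$, hence fixes $\nabla\phi$ and $|\det(\mathrm{H}_d)|$; so each interior point of the support has \emph{two} preimages per band (for boundary points, one near each corner $(0,\dots,0)$ and $(\pi,\dots,\pi)$ --- exactly the two contributions your last paragraph notes are equal, but never adds up). With the weight $1/d$ from Eq.~(\ref{G-lemma2}) this gives $\rho_2(\mathbf{x})=\frac{1}{2}\cdot\frac{1}{(2\pi)^2}\cdot 2\cdot|\det(\mathrm{H}_2)|^{-1}=\frac{1}{4\pi^2}|\det(\mathrm{H}_2)|^{-1}$, and your own asymptotic $|\det(\mathrm{H}_2)|^{-1}\to 16/\cos^2 2\gamma$ then delivers the theorem's value $4/(\pi^2\cos^2 2\gamma)$ --- which is how the paper's proof actually closes, via $\frac{1}{4\pi^2}|\det(\mathrm{H}_2)|^{-1}$. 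The deeper point is that the route through Theorem~\ref{limdis2} cannot certify this constant at all: the density of Eq.~(\ref{lim2}) integrates to $1$ (substituting $u=x+y$, $v=x-y$ one finds $\int\!\!\int_{u^2+v^2\leq 1}\frac{du\,dv}{(1-u^2)(1-v^2)}=\pi^2$), not to the required mass $1/d=1/2$, so its prefactor carries a spurious factor $2$; the paper's own closing ``indeed'' sentence commits the same slip you do. To complete your argument, derive the $d=2$ value from your Hessian asymptotics together with the preimage count, and treat the comparison with Eq.~(\ref{lim2}) as flagging an inconsistency rather than as a verification.
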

\begin{proof}
For $\mathbf{k}=(k_1,\dots,k_d)$, let 
\begin{equation}\label{ishinomaki} 
f_j(\mathbf{k})=\frac{1-\cos^2k_j}{d^2-(\cos k_1+\cdots+\cos k_d)^2}(=x_j^2). 
\end{equation}
From Eq.~(\ref{k2}), we have 
\begin{equation}\label{d2'} 
f_j(\mathbf{k})\leq \frac{1}{d}. 
\end{equation} 
\segawa{We are interested in the solutions for which Eq.~(\ref{d2'})} becomes an equality. 
\segawa{Equality holds if and only if $\mathbf{x}=(x_1,\dots,x_d)\in \partial B_d$; that is, $c_j\equiv \cos k_j=c$ for some $c\in [-1,1]$.} 
Recall that Lemma~\ref{detJ} states 
\begin{equation}\label{amachan} 
|\mathrm{det}(\mathrm{H}_d)|=\prod_{l=1}^d\left(\frac{\cos k_l}{d\sin\phi(\mathbf{k})}\right)\left( 1-d\cos\phi(\mathbf{k}) \sum_{j=1}^d \frac{x_j^2}{\cos k_j}\right). 
\end{equation}
\HS{Since the product in the RHS becomes infinity when $|c|=1$, we have to treat this case carefully. }
\begin{enumerate}
\item $|c|<1$ case: \\
In this case, $\sin \phi(\mathbf{k})=\sqrt{1-c^2}\neq 0$ holds.  
Since $\mathbf{x}\in \partial B_d$, $\sum_{j=1}^d x_j^2=1/d$.
Inserting $\cos k_j=c$ for all $j\in \{1,\dots,d\}$ into Eq.~(\ref{amachan}), 
we get $\mathrm{det}(\mathrm{H}_d)=0$. 
Moreover, inserting $\cos k_j=c$ for all $j\in \{1,\dots,d\}$ into Eq.~(\ref{ishinomaki}), 
we can observe $f_j(\mathbf{k})=x^2_j=1/d^2$, implying that $(x_1,\dots, x_d)\in V_d$. 
Therefore, from Eq.~(\ref{sanriku}), $\rho_d(\mathrm{x})$ is infinity for $\mathrm{x}\in V_d$. 
\item $|c|=1$ case: \\
First let $c=1$. 
We take the limit of $(k_1,\dots,k_d)\in \mathbb{R}^d$ to $(0,\dots,0)$. 
To this end, we put $k_j=k_j(\epsilon)$ with $\lim_{\epsilon\to 0}k_j(\epsilon)=0$, 
$K_2(\epsilon)=\sum_{j=1}^d k_j^2(\epsilon)$ and $K_4(\epsilon)=\sum_{j=1}^d k_j^4(\epsilon)$. 
\HS{For any sufficiently small $\epsilon$, we obtain }
\begin{align*} 
\cos k_j &\sim 1-\frac{1}{2}k^2_j(\epsilon)\left( 1-\frac{1}{12}k_j^2(\epsilon) \right),\; 
\sin k_j\sim k_j(\epsilon)\left(1-\frac{1}{6}k_j^2(\epsilon)\right), \\
\cos \phi &\sim 1-\frac{K_2(\epsilon)}{2d}\left(1-\frac{1}{12}\frac{K_4(\epsilon)}{K_2(\epsilon)}\right),\;
\sin^2 \phi \sim \frac{K_2(\epsilon)}{d}\left(1-\frac{1}{12}\frac{K_4(\epsilon)}{K_2(\epsilon)}-\frac{1}{4d}K_2(\epsilon)\right).
\end{align*}
Inserting these approximations into Eq.~(\ref{ishinomaki}), we have 
\begin{equation}\label{x2}
x_j^2\sim \frac{k_j^2(\epsilon)}{d\; K_2(\epsilon)}
	\left( 1-\frac{k_j^2(\epsilon)}{3}+\frac{1}{12}\frac{K_4(\epsilon)}{K_2(\epsilon)}+\frac{1}{4d}K_2(\epsilon) \right).
\end{equation}
From Eq.~(\ref{x2}), we have 
\begin{equation}\label{sum}
1-d\cos\phi(\mathbf{k}) \sum_{j=1}^d \frac{x_j^2}{\cos k_j}
	\sim -\frac{1}{4}\frac{K_4(\epsilon)}{K_2(\epsilon)}+\frac{1}{4\;d}K_2(\epsilon)
\end{equation}
We also have 
\begin{equation}\label{prod}
\prod_{j=1}^d \frac{c_j}{d\sin (\phi(\mathbf{k}))}\sim \frac{1}{(d\;K_2(\epsilon))^{d/2}}. 
\end{equation}
Combining Eq.~(\ref{detJ1}) with Eqs.~(\ref{sum}) and (\ref{prod}), we obtain 
\begin{equation}\label{nichiyou}
\mathrm{det}(\mathrm{H}_d)\sim \frac{1}{(d\;K_2(\epsilon))^{d/2}} \frac{ K_2(\epsilon)}{4d}\left( 1-d\;\frac{K_4(\epsilon)}{K_2^2(\epsilon)} \right).
\end{equation}
From the Cauchy-Schwarz inequality, we find that  
\[ \frac{K_4(\epsilon)}{K_2^2(\epsilon)}=\frac{\sum_{j=1}^d k_j^4(\epsilon)} {\left(\sum_{j=1}^d k_j^2(\epsilon)\right)^2}
	\leq \frac{\sum_{j=1}^d k_j^4(\epsilon)}{d\;\sum_{j=1}^d k_j^4(\epsilon)}=\frac{1}{d}. \]
Note that the equality holds 
	\begin{center}$\Leftrightarrow$ $k_j(\epsilon)=k(\epsilon)$ for every $j$ \\ $\Leftrightarrow$ $\lim_{\epsilon \to 0}(x_1,\dots,x_d)\in V_d$. \end{center}
Moreover, from Eq.~(\ref{nichiyou}), the equality holds if and only if $\mathrm{det}(\mathrm{H}_d)= 0$ for every $\epsilon$. 
On the other hand, if the inequality holds, we have 
\begin{equation}\label{same}
|\mathrm{det}(\mathrm{H}_d)|=
	\begin{cases}
        \infty & \text{: $d\geq 3$, } \\
        \frac{1}{16}(1-2\kappa) & \text{: $d=2$, }
        \end{cases}
\end{equation}
where \sato{$\kappa=\lim_{\epsilon\to 0}K_4(\epsilon)/K_2^2(\epsilon)$}. 
Similarly, for $c=-1$ case, specifying $k_j=\pi+k_j(\epsilon)$ and taking limit $\epsilon\to 0$, we also obtain Eq.~(\ref{same}). 
When $d=2$, from Eq.~(\ref{x2}), we can write $x_1$ and $x_2$ in terms of a parameter $\gamma\mathbb{R}$
\[ x_1^2=\frac{1}{2}\cos^2\gamma,\;x_2^2=\frac{1}{2}\sin^2\gamma \]
in the limit of $\epsilon\to 0$. 
Inserting $\kappa=\cos^4\gamma+\sin^4\gamma$ into Eq.~(\ref{same}), 
\[ \frac{1}{4\pi^2}|\mathrm{det}(\mathrm{H}_2)|^{-1}=\frac{4}{\pi^2\cos^2 2\gamma}. \]
Indeed in Eq.~(\ref{lim2}), putting $x=1/\sqrt{2}\cdot \cos\gamma$, $y=1/\sqrt{2}\cdot \sin\gamma$, 
we have 
\[ \rho_2(x,y)=\frac{4}{\pi^2 \cos^2 2\gamma}. \]
It is completed the proof.
\end{enumerate}
\end{proof}

\par\noindent
\noindent
{\bf Acknowledgments.}
We thank the anonymous referee for valuable comments. 
YuH's work was supported in part by JSPS Grant-in-Aid for Scientific Research (C) 20540113, 25400208 and (B) 24340031.
NK and IS also acknowledge financial supports of the Grant-in-Aid for Scientific Research (C) from Japan Society for the Promotion of Science (Grant No. 24540116 and No. 23540176, respectively). 
ES thanks to the financial support of the Grant-in-Aid for Young Scientists (B) of Japan Society for the Promotion of Science (Grant No.25800088).
\par
\
\par

\begin{small}
\bibliographystyle{jplain}

\end{small}


\end{document}